\newenvironment{proof}{\paragraph{\it Proof.}}{\hfill$\square$}
\newtheorem{thm}{Theorem}
\newtheorem{lem}{Lemma}
\newtheorem{lemma}{Lemma}[section]
\newtheorem{prop}{Proposition}
\newtheorem{defi}{Definition}
\newtheorem{rem}{Remark}
\newtheorem{assump}{A\!}
\newtheorem{example}{Example}
\def\@#1{\pmb{#1}}
\def\ca#1{\mathcal{#1}}
\title{Performative Control for Linear Dynamical Systems}
\author{
  Songfu Cai\thanks{Equal contribution.}\,\,,\,\,\,\, Fei Han\footnotemark[1]\,\,,\,\,\,\, Xuanyu Cao\thanks{Corresponding author.}\\
  Department of Electronic and Computer Engineering\\
  The Hong Kong University of Science and Technology\\
  \texttt{eesfcai@ust.hk, fhanac@connect.ust.hk, eexcao@ust.hk}}
\begin{document}

\maketitle

\begin{abstract}
We introduce the framework of performative control, where the policy chosen by the controller affects the underlying dynamics of the control system. This results in a sequence of policy-dependent system state data with policy-dependent temporal correlations. Following the recent literature on performative prediction \cite{perdomo2020performative}, we introduce the concept of a performatively stable control (PSC) solution. We first propose a sufficient condition for the performative control problem to admit a unique PSC solution with a problem-specific structure of distributional sensitivity propagation and aggregation. We further analyze the impacts of system stability on the existence of the PSC solution. Specifically, for {almost surely strongly stable} policy-dependent dynamics, the PSC solution exists if the sum of the distributional sensitivities is small enough. However, for almost surely unstable policy-dependent dynamics, the existence of the PSC solution will necessitate a temporally backward decaying of the distributional sensitivities. We finally provide a repeated stochastic gradient descent scheme that converges to the PSC solution and analyze its non-asymptotic convergence rate. Numerical results validate our theoretical analysis.
\end{abstract}

\section{Introduction}
Control theory is a fundamental field of study in engineering
and mathematics that centers on coordinating the behaviors of dynamical
systems. It has extensive applications in aerospace, robotics,
manufacturing, economics, natural sciences, etc. It provides a framework
for designing control policies that regulate the system states, enabling
them to evolve in a desired manner over time dynamically. The linear state
space model is a powerful tool for representing dynamical systems,
which employs a set of difference equations to describe the Markovian
system state process with a linear state transition model. {Many of the existing works} on control of linear dynamical systems (LDSs) \cite{recht2019tour,gravell2020learning,zhao2023non,fazel2018global,mania2019certainty,venkataraman2019recovering,zhao2023non,zhou2023efficient} {are developed based on the} key assumption that the system state transition model is static.
However, {in many real world applications, such a static dynamics assumption usually does not hold because} system state transition model can be changed
by control policies. For instance, in the stock market, the investment
policy of well-known investors can impact the actions of the general
public investors, resulting in famed investment strategy-dependent
changes to the dynamics of stock prices. Another example is autonomous vehicle (AV). A deployed AV might change how the
pedestrians and other neighboring cars behave, and the resulting
traffic environment might be quite different from what the designers
of the AV had in mind \cite{nikolaidis2017game}. {Such interplay between decision-making and decision-dependent system dynamics is a pervasive phenomenon in a multitude of domains, including finance, transportation, and public policy, among others.}

Most of the existing works on the control of linear systems with non-static
state transition model are primarily focused on the additive random
perturbations to the system state transition matrix, where policy
optimization methods are employed to find the optimal control policy
that minimizes quadratic costs \cite{hambly2021policy,agarwal2019online,hazan2020nonstochastic,ghai2024online}. This type of problem
also includes the control of linear systems with additive state-dependent
noise \cite{liu2023data,duncan2017stochastic} or action-dependent noise \cite{yaghmaie2022linear,carlos2018analysis}, which
is equivalent to having additive perturbations on the state transition
matrix or the control input gain matrix, respectively. Some other
works on jump/switched linear systems formulate the variation of system
model as stochastic model jumps among multiple linear modes, where
the jumping law is governed by a finite time-homogeneous Markov process. However, in all these works, the changes of the system model are unrelated to the control policy.

\textit{Performative prediction }provides a systematic way to model
the interaction between decision-making and data via decision-dependent data distribution maps \cite{quinonero2022dataset}. {The pioneering work by \cite{perdomo2020performative} has led to a growing body of research dedicated to performative prediction problems.} Most of these studies are focused on establishing the conditions for the existence and uniqueness of the performative stable point and designing learning algorithms
with provable convergence to such a unique performative stable point \cite{li2022state,li2209multi,drusvyatskiy2023stochastic,brown2022performative,mendler2020stochastic,wood2021online,ray2022decision}, or algorithms to find a stationary solution of the performative risk \cite{izzo2021learn,miller2021outside,ray2022decision}.

An open question follows: Can the idea of performative prediction,
which tackles decision-dependent data in the learning/prediction domain, be employed to address the decision-dependent dynamics in the
control domain? Notice that in all the aforementioned performative
prediction works \cite{perdomo2020performative,li2022state,li2209multi,drusvyatskiy2023stochastic,brown2022performative,mendler2020stochastic,wood2021online,ray2022decision,izzo2021learn,miller2021outside,ray2022decision}, the data input to the learning algorithms
are independently generated based on the decision-dependent distributions.
No temporal correlation is considered or exploited between two consecutive sets of input data. However, in the context of control, the situation is different as the changing system dynamics introduce various additional
complexities. This is because the data of the system state at each time
the step will depend on the decision-dependent state transition matrices in all previous time steps. The expected total cost function to be
optimized will depend on all the decision-dependent state transition
matrices across the entire control time horizon. This implies that
we need a framework of \textit{performative control} that is more
general than the framework of performative prediction, and that can
accommodate a sequence of decision-dependent data with temporal correlations,
where the temporal correlations are again decision-dependent. 

In this work, we provide an affirmative answer to the above question. Our idea
hinges on adopting the concept of performative stable solution \cite{perdomo2020performative}, which is a fixed point solution for the
interplay between the controller and the system dynamics that react
to the controller's decisions. The condition for the existence of
such a performative stable solution is obtained by analyzing the
propagation and aggregation of sensitivities associated with the
distributions of policy-dependent system dynamics over the entire
control time horizon. In particular, we follow existing studies \cite{agarwal2019online,hazan2020nonstochastic,ghai2024online} and focus on the disturbance-action policy, which allows the
consideration of general convex control costs instead of only quadratic
control costs. 

To the best of our knowledge, this paper provides the first study
and analysis of performative control of linear systems where the system
dynamics are constantly changing in a control-policy-dependent manner.
We highlight the following key contributions:
\begin{itemize}
	\item We introduce the notion of performative control, where the deployed
	control policy affects the underlying dynamics of the control system.
	We provide sufficient conditions for the existence and uniqueness
	of the performative stable control (PSC) solution. The sufficient
	condition is expressed in terms of a weighted sum of all the distributional
	sensitivities associated with the policy-dependent system state transition
	matrices over the entire control horizon. An interesting finding is
	that the sufficient condition exhibits a structure of sensitivity propagation and aggregation, implying that it is preferable for sensitivities
	to be relatively small in the early stages of the system state evolution.
	
	\item We analyze the impacts of system stability on the existence and uniqueness of the PSC solution. We show that when the policy-dependent dynamics are {almost surely strongly stable}, the PSC solution exists if the sum of all distributional sensitivities is below a certain threshold. On the other hand, when the policy-dependent dynamics are almost surely unstable, the proposed sufficient condition for the existence of the PSC solution will place a necessary requirement on a temporally backwards decaying of the distributional sensitivities.
	
	\item We propose a repeated stochastic gradient descent (RSGD) scheme
	and analyze its convergence towards the PSC solution. We show that
	the scheme is convergent under the same sufficient condition for the existence
	of the PSC solution. With an appropriate step size rule, the expected squared distance between the PSC solution and the iterates decays
	as $\mathcal{O}\left(\frac{1}{N}\right),$ where $N$ is the iteration
	number.
\end{itemize}
Finally, we conduct experiments on policy-dependent stock investment
risk minimization problem. The numerical results validate the effectiveness
of our algorithm and theoretical analysis.

\subsection{Related Work} 

\textbf{Performative Prediction.} The notion of performative prediction is initiated by \cite{perdomo2020performative},
where performative stability is first introduced, and a sufficient and necessary condition is provided so that the performative stable point can be reached via iterative risk minimization algorithms. Since
then, there has been a growing literature analyzing the performative prediction problem and studying the convergence of learning algorithms to the performative stable point \cite{mendler2020stochastic,drusvyatskiy2023stochastic,brown2022performative,li2022state,wood2021online}.There are also some other papers that find algorithms that converge to a stationary solution of the performative risk \cite{izzo2021learn,miller2021outside,ray2022decision}. Our problem poses entirely new challenges because the decision-dependent data of system state and control costs also have decision-dependent temporal correlations. Such a two-layer decision-dependent structure cannot be incorporated into the existing performative prediction framework.

There are some very recent works on \textit{performative reinforcement learning}, where a Markov decision process (MDP) is considered and
the deployed policy not only changes the costs but also the underlying
state transition kernel \cite{mandal2023performative,rank2024performative}. However, these works only considered
the tabular MDP, where the state and action space are finite. The LDS considered in our work may also be viewed
as a Markov decision process with a decision-dependent linear transition
kernel and decision-dependent costs. But both the state and action
space are continuous and there are infinitely many admissible state-action
pairs, which are beyond the scopes of \cite{mandal2023performative,rank2024performative}.

{\textbf{Stateful Performative Prediction.} Note that most of the existing
	performative prediction works \cite{perdomo2020performative,mendler2020stochastic,mendler2020stochastic,drusvyatskiy2023stochastic,li2022state,wood2021online,drusvyatskiy2023stochastic,brown2022performative,li2022state,wood2021online} are non-stateful in the
	sense that, for any deployed policy $\theta$, the data sample $Z$
	{follows} a static distribution $\mathcal{D}\left(\theta\right),$ i.e., $Z\sim\mathcal{D}\left(\theta\right).$ The seminal work \cite{brown2022performative}
	generalizes the stateless framework of \cite{perdomo2020performative} and proposes
	a more general framework of stateful performative prediction via a
	stateful distribution transition map $f\left(\cdot\right)$. Specifically,
	the observed data distribution in \cite{brown2022performative} is time-varying and depends
	on the history of previously deployed polices, i.e., $\mathcal{D}_{t+1}=f\left(\mathcal{D}_{t},\theta_{t}\right)$.
	Consequently, in comparison to the conventional non-stateful performative prediction works
	\cite{perdomo2020performative,mendler2020stochastic,mendler2020stochastic,drusvyatskiy2023stochastic,li2022state,wood2021online,drusvyatskiy2023stochastic,brown2022performative,li2022state,wood2021online}, this framework is capable of encapsulating the phenomenon
	of strategic decision-making with outdated information, and serves
	as a foundation for investigations of the disparate effects of performativity
	(please refer to Examples 1-3 in \cite{brown2022performative} for more details). The interconnections
	and generalizations between our work and the stateful performative prediction will be substantiated
	in Section 2.}

\textbf{Nonstochastic Control.} Another line of relevant works pertains to non-stochastic control, which is initiated by \cite{agarwal2019online}. Various applications of non-stochastic
control can be found in \cite{hambly2021policy,agarwal2019online,hazan2020nonstochastic,ghai2024online}. At the core of non-stochastic
control is the disturbance-action control policy, which chooses the
action as a linear map of the past disturbances \cite{hazan2020nonstochastic}. Such a disturbance-action policy facilitates efficient algorithms for control problems with
arbitrary additive disturbances in the dynamics and arbitrary convex
control costs instead of quadratic costs only. In this paper, we adopt
the disturbance-action control policy for analysis. However, in contrast
to \cite{agarwal2019online,hambly2021policy,agarwal2019online,hazan2020nonstochastic,ghai2024online}, where the system dynamics are static, our analysis involves
policy-dependent nonstationary dynamics.

\section{Motivations}
In this section, we discuss the key motivations for our performative
linear control framework. We also outline the key connections and
generalizations of our proposed framework to the stateful performative prediction framework
of \cite{brown2022performative}. 

\textbf{Connections between Static Stateful Distribution Transition Maps and LDS.} The LDS modeling via control theory in our work is closely aligned with the static stateful distribution maps
within the state performative prediction framework \cite{brown2022performative}. Let us first consider a static
stateful distribution transition map $f\left(\cdot\right)$ with $\mathcal{D}_{t+1}=f\left(\mathcal{D}_{t},\theta_{t}\right),\forall t\geq0.$
To facilitate the analysis, we use a performative data sample $Z_{t}\sim\mathcal{D}_{t}$
to equivalently characterize the performative distribution $\mathcal{D}_{t}.$
If the properties of $f\left(\cdot\right)$ are nice enough, then
from the perspective of random variables, there will exist a corresponding
mapping $\widetilde{f}\left(\cdot\right)$ such that 
\begin{align}
	& Z_{t+1}\overset{\mathrm{d}}{=}\widetilde{f}\left(Z_{t},\theta_{t}\right),Z_{t+1}\sim\mathcal{D}_{t+1},Z_{t}\sim\mathcal{D}_{t},\forall t\geq0,
\end{align}
where $\overset{\mathrm{d}}{=}$ denotes equal in distribution. Linearizing
$\widetilde{f}\left(\cdot\right)$ at some equilibrium point $\left(\overline{Z},\overline{\theta}\right)$
with $\overline{Z}\sim\overline{\mathcal{D}}$ and ignoring the higher
order terms will lead to
\begin{align}
	Z_{t+1} & \overset{\mathrm{d}}{=}\mathbf{A}Z_{t}+\mathbf{B}\theta_{t}+\mathbf{w},\label{eq:linearize}
\end{align}
where $\mathbf{A}=\left[\frac{\partial\widetilde{f}}{\partial d}\right]_{\left(\overline{d},\overline{\theta}\right)}$
and $\mathbf{B}=\left[\frac{\partial\widetilde{f}}{\partial\theta}\right]_{\left(\overline{d},\overline{\theta}\right)}$
are the static Jacobin matrices at the equilibrium point $\left(\overline{d},\overline{\theta}\right)$
and $\mathbf{w}=\widetilde{f}\left(\overline{Z},\overline{\theta}\right)-\mathbf{A}\overline{Z}-\mathbf{B}\overline{\theta}.$ Consider the performative data sample $Z_{t}$, the deployed
policy $\theta_{t}$ and the residual $\mathbf{w}$ in (\ref{eq:linearize})
as the state variable $\mathbf{x}_{t}$, the control input action
$\mathbf{u}_{t}$ and the additive system noise $\mathbf{w}_{t}\sim\mathbf{w}$
of an LDS, respectively. The linearized static stateful distribution
map model in (\ref{eq:linearize}) is thus precisely connected to
a LDS with system dynamics given by
\begin{align}
	& \mathbf{x}_{t+1}=\mathbf{A}\mathbf{x}_{t}+\mathbf{B}\mathbf{u}_{t}+\mathbf{w}_{t}.\label{eq:LDS}
\end{align}

\textbf{Extension to Performative Stateful Distribution Transition
	Maps via Performative LDS.} Further consider a more complicated
case of \textit{performative distribution transition maps} $f_{\theta_{t}}\left(\cdot\right)$
with $\mathcal{D}_{t+1}=f_{\theta_{t}}\left(\mathcal{D}_{t},\theta_{t}\right),\forall t\geq0,$
where the specific form of the distribution transition map $f_{\theta_{t}}$
is time-varying and depends on the delployed policy $\theta_{t}$.
Employing a similar linearization for $\widetilde{f}_{\theta_{t}}\left(\cdot\right)$
and neglecting the higher order terms, we obtain
\begin{align}
	Z_{t+1} & \overset{\mathrm{d}}{=}\mathbf{A}_{\theta_{t}}Z_{t}+\mathbf{B}_{\theta_{t}}\theta_{t}+\mathbf{w}_{\theta_{t}},\label{eq:linearize-1}
\end{align}
where $\mathbf{A}_{\theta_{t}}=\left[\frac{\partial\widetilde{f}_{\theta_{t}}}{\partial d}\right]_{\left(\overline{d},\overline{\theta}\right)}$
and $\mathbf{B}_{\theta_{t}}=\left[\frac{\partial\widetilde{f}_{\theta_{t}}}{\partial\theta}\right]_{\left(\overline{d},\overline{\theta}\right)}$
are the performative Jacobin matrices,
and $\mathbf{w}_t=\widetilde{f}_{\theta_{t}}\left(\overline{Z},\overline{\theta}\right)-\mathbf{A}_{\theta_{t}}\overline{Z}-\mathbf{B}_{\theta_{t}}\overline{\theta}.$
This results in an equivalent performative LDS given by
\begin{align}
	& \mathbf{x}_{t+1}=\mathbf{A}_{\mathbf{u}_{t}}\mathbf{x}_{t}+\mathbf{B}_{\mathbf{u}_{t}}\mathbf{u}_{t}+\mathbf{w}_{\mathbf{u}_{t}}.\label{eq:performative LDS}
\end{align}
In contrast to (\ref{eq:LDS}), where the system dyanmics are static,
the state transition matrix $\mathbf{A}_{\mathbf{u}_{t}}$, control
input gain matrix $\mathbf{B}_{\mathbf{u}_{t}}$ and additive noise
$\mathbf{w}_{\mathbf{u}_{t}}$ in (\ref{eq:performative LDS}) are
all performative and depend on the deployed control policy $\mathbf{u}_{t}$. 

In conclusion, the LDS modeling presented in our paper allows for
the \textit{performative transition maps of performative distributions}, thereby
extending the technical results previously obtained for fixed performative
distribution transition maps in the stateful performative prediction work \cite{brown2022performative}. As a result, our
proposed performative LDS framework has the potential to enhance the
understanding of general performative prediction.

\section{Problem Setup}

We consider the control of a linear dynamic system with per stage cost $c_{t}\left(\mathbf{x}_{t},\mathbf{u}_{t}\right)$.
A control policy $\pi$ is a mapping $\pi:\mathbb{R}^{d_{x}\times1}\rightarrow\mathbb{R}^{d_{u}\times1},$
which maps the system state $\mathbf{x}_{t}$ to the control action
$\mathbf{u}_{t}$, i.e., $\mathbf{u}_{t}=\pi\left(\mathbf{x}_{t}\right)$.
For each control policy $\pi$, we attribute a finite time horizon expected cost defined as
\begin{align}
	C_{T}^{\pi} & =\mathbb{E}_{\mathbf{x}_{0},\left\{ \mathbf{A}_{t}\right\} ,\left\{ \mathbf{w}_{t}\right\} }\left[\sum_{t=0}^{T}c_{t}\left(\mathbf{x}_{t},\mathbf{u}_{t}\right)\right],\label{eq:min formulation}
\end{align}
where $\mathbf{x}_{t+1}=\mathbf{A}_{t}\mathbf{x}_{t}+\mathbf{B}\mathbf{u}_{t}+\mathbf{w}_{t},$
the initial system state $\mathbf{x}_{0}$ follows a general distribution
of $\mathcal{D}_{x_{0}}$ with bounded support $\|\mathbf{x}_0\|\leq x_0$, and $\mathbb{E}_{\mathbf{x}_{0},\left\{ \mathbf{A}_{t}\right\} ,\left\{ \mathbf{w}_{t}\right\} }$
represents the expectation over $\mathbf{x}_{0},$ the entire policy-dependent state transition matrix sequence $\left\{ \mathbf{A}_{t},1\leq t\leq T\right\} $
and the entire disturbance sequence $\left\{ \mathbf{w}_{t},1\leq t< T\right\} $. 

{To the best of our knowledge, this paper is the very first work to
investigate the performative LDS, and we intend to provide a thorough
theoretical investigation and aim at establishing various new theoretical
results. Therefore, we opt to construct our performative LDS theoretical framework
upon the performative state transition matrix $\mathbf{A}_{t}$,
while maintaining the control input gain matrix $\mathbf{B}$ and
additive disturbance $\mathbf{w}_{t}$ as non-performative. This
facilitates us to streamline the theoretical analysis and consolidates
the system design insights.} 

We have the following assumption on the additive disturbances $\left\{ \mathbf{w}_{t},1\leq t\leq T\right\} .$ 
\begin{assump}	\label{Assumption noises wt}
	The additive disturbance per time step $\mathbf{w}_t$ is bounded, i.i.d, and zero-mean with a lower bounded covariance i.e., $\mathbf{w}_t \sim \mathcal{D}_{\mathbf{w}}, \mathbb{E}[\mathbf{w}_t]=\mathbf{0}, \mathbb{E}[\mathbf{w}_t \mathbf{w}_t^{\top}] \succeq \sigma^2 \mathbf{I}$, $\|\mathbf{w}_t\| \leq W, \forall 0 \leq t <T$, and $\mathbb{E}[\mathbf{w}_{t_1} \mathbf{w}_{t_2}^{\top}]=\mathbf{0}, \forall 0 \leq t_1 \neq t_2 < T$.
\end{assump}

\textbf{Disturbance-Action Control Policy.} We work with the following class of disturbance-action control policy throughout this paper, which is commonly used in nonstochastic control \cite{hambly2021policy,agarwal2019online,hazan2020nonstochastic,ghai2024online} to address general convex control cost functions.

\begin{defi}
	(Disturbance-Action Policy). For a disturbance-action control policy,
	the mapping $\pi$, $\forall0\leq t<T$, is uniquely characterized
	by a set of matrices $\left\{ \mathbf{M}^{(1)},\cdots,\mathbf{M}^{(H)}\right\} $.
	At every time step $t$, such a disturbance-action control policy
	assigns a control action $\mathbf{u}_{t}^{\left(\mathbf{M}\right)}$
	in the form of
	\begin{align}
		& \mathbf{u}_{t}^{\left(\mathbf{M}\right)}=\pi\left(\mathbf{x}_{t}\right)=-\mathbf{K}\mathbf{x}_{t}+\sum_{i=1}^{H}\mathbf{M}^{(i)}\mathbf{w}_{t-i}=-\mathbf{K}\mathbf{x}_{t}+\mathbf{M}\left[\mathbf{w}\right]_{t-1}^{H},\label{eq:DAP u_t^M}
	\end{align}
	where $\mathbf{M}=\left[\mathbf{M}^{(1)}, \mathbf{M}^{(2)}, \cdots, \mathbf{M}^{(H)}\right]$ belongs to a convex set $\mathbb{M}$ with bounded support $\|\mathbf{M}\|_F \leq M$, $[\mathbf{w}]_{t-1}^H$ is short for $\left[\mathbf{w}_{t-1}^{\top}, \cdots \mathbf{w}_{t-1-H}^{\top}\right]^{\top}$, $H < T$ is a constant and $\mathbf{w}_i=\mathbf{0}$ for all $i<0$.
\end{defi}

In the disturbance-action policy (\ref{eq:DAP u_t^M}), we adopt a
class of linear controller $\mathbf{K}$ defined as follows.
\begin{defi}
	(Strongly Stabilizing Linear Controller \cite{agarwal2019online,hambly2021policy,hazan2020nonstochastic,ghai2024online}). Given $\mathbf{A}$ and $\mathbf{B}$, a linear controller $\mathbf{K}$ is $(\kappa, \gamma)$ almost surely strongly stable for real numbers $\kappa \geq 1, \gamma<1$, if $\|\mathbf{K}\| \leq \kappa$, and there exists matrices $\mathbf{Q}$ and $\mathbf{L}$ such that $\widetilde{\mathbf{A}} := \mathbf{A}-\mathbf{B K}:=\mathbf{Q} \mathbf{L} \mathbf{Q}^{-1}$, with $\|\mathbf{L}\| \leq 1-\gamma$ and $\|\mathbf{Q}\|,\left\|\mathbf{Q}^{-1}\right\| \leq \kappa$.
\end{defi}


\medskip{}

It is worth noting that the disturbance-action policy is only parameterized
by the matrix $\mathbf{M}.$ Whereas the state feedback gain $\mathbf{K}$,
which is a fixed matrix, is not part of the parameterization of the
policy. As pointed out in \cite{agarwal2019logarithmic}, a typical choice of the parameter $H$ is $H = \gamma^{-1} \log(T\kappa^2)$. With an appropriate choice of the policy $\mathbf{M}$, the control action $\mathbf{u}_{t}^{(\mathbf{M})}$
in (\ref{eq:DAP u_t^M}) is capable of approximating any linear state
feedback control policy in terms of the total cost suffered with a finite time horizon of $H$ \cite{hambly2021policy,agarwal2019online,hazan2020nonstochastic,ghai2024online}.

\textbf{Policy-dependent Dynamics.} Without loss of generality, at
any time step $t$, the impact of the disturbance-action control policy
$\mathbf{u}_{t}^{\left(\mathbf{M}\right)}$ to the dynamics of the
linear system is modeled as a policy-dependent additive perturbation
$\mathbf{\Delta}_{t}$ to a common state transition matrix $\mathbf{A}.$ 
\begin{assump}\label{A2}
	(Policy-dependent State Transition Matrix). The disturbance-action
	policy-dependent state transition matrix $\mathbf{A}_{t}$ takes
	the form of 
	\begin{align}\label{trans_matrix}
		& \mathbf{A}_{t}=\mathbf{A}+\mathbf{\Delta}_{t},\mathbf{\Delta}_{t}\sim\mathcal{D}_{t}\left(\mathbf{M}\right),\forall0\leq t< T,
	\end{align}
	where $\mathbf{A}$ is the mean value of $\mathbf{A}_{t}$,
	and $\mathbf{\Delta}_{t}$ is the policy-dependent state transition
	perturbation with zero mean and bounded support, i.e., $\text{\ensuremath{\mathbb{E}\left[\mathbf{\Delta}_{t}\right]}=\ensuremath{\mathbf{0}}}$
	and there exists a bounded constant $\xi_{t}$ such that $\left\Vert \mathbf{\Delta}_{t}\right\Vert \leq\xi_{t}$,
	$\forall0\leq t<T$. For different time steps $t_{1}$ and $t_{2}$,
	$\mathbf{\Delta}_{t_{1}}$ and $\mathbf{\Delta}_{t_{2}}$ are mutually
	independent. Besides, $\left\{ \mathbf{\Delta}_{t},0\leq t<T\right\} $
	and $\left\{ \mathbf{w}_{t},0\leq t<T\right\} $ are mutually
	independent.
\end{assump}

{\begin{rem}
	 (Non-zero Mean $\mathbf{\Delta}_t$). If the mean of the disturbance is non-zero and time-varying, i.e.,  $\mathbb{E}\left[\boldsymbol{\Delta}_t\right]=\Theta_t$, we will have a new equivalent  $\mathbf{A}_t^{\prime}=\mathbf{A}+\Theta_t$ with zero mean disturbances. We only need to choose a new linear controller $\mathbf{K}_t^{\prime}$ such that $\mathbf{A}_t^{\prime}-\mathbf{B} \mathbf{K}_t^{\prime}$ is $\left(\kappa_t, \gamma_t\right)$-strongly stabilizing. As a result, without loss of generality, we assume $\mathbf{\Delta}_t$ is zero mean throughout this paper.
\end{rem}}

{\begin{rem}
	(Policy-dependent Control Input Gain Matrix $\mathbf{B}$). Note that under the DAP \eqref{eq:DAP u_t^M},  the disturbance-action is given by $\mathbf{B M}[\mathbf{w}]_{t-1}^H$, which is linear in policy $\mathbf{M}$. Such kind of linear disturbance-action control policy has various nice theoretical performance guarantees as substantiated in \cite{hambly2021policy,agarwal2019online,hazan2020nonstochastic,ghai2024online}. On the other hand, if $\mathbf{B}$ is also performative, we will have a generalized disturbance-action $\mathbf{B}_t(\mathbf{M}) \mathbf{M}[\mathbf{w}]_{t-1}^H$, which can be possibly nonlinear in policy $\mathbf{M}$. Such kind of generalized nonlinear disturbance-action control policy has received very few research attention, and it can serve as a very interesting future research direction.
\end{rem}}


We make the following sensitivity
assumption on the distributions $\left\{ \mathcal{D}_{t}\left(\mathbf{M}\right),0\leq t<T\right\} .$
\begin{assump}($\varepsilon$-Sensitivity). \label{epsilon-Sensitivity}For
	any $t=0,1,\cdots,T-1$, there exists a constant $\varepsilon_{t}>0$
	such that
	\begin{align}
		& \mathcal{W}^{1}\left(\mathcal{D}_{t}\left(\mathbf{M}\right),\mathcal{D}_{t}\left(\mathbf{M}^{\prime}\right)\right)\leq\varepsilon_{t}\left\Vert \mathbf{M}-\mathbf{M}^{\prime}\right\Vert _{F},\ \forall\mathbf{M},\mathbf{M}^{\prime}\in\mathcal{\mathbb{M}},
	\end{align}
	where $\mathcal{W}^{1}\left(\mathcal{D},\mathcal{D}^{\prime}\right)$
	denotes the Wasserstein-1 distance between the distributions $\mathcal{D}$
	and $\mathcal{D}^{\prime}$.
\end{assump}

Assumption A\ref{epsilon-Sensitivity} imposes a regularity requirement
on the distributions $\left\{ \mathcal{D}_{t}\left(\mathbf{M}\right),0\leq t<T\right\} .$
Intuitively, if the disturbance-action control policies are made according
to similar policy parameterizations $\mathbf{M}$, then the resulting
distributions of the policy-dependent state transition perturbations
should also be similar.

\textbf{System State Evolutions.} Under the disturbance-action policy
(\ref{eq:DAP u_t^M}), the following lemma shows that the system state
$\mathbf{x}_{t},\forall1\leq t\leq T,$ can be uniquely determined
by $\mathbf{x}_{0}$, $\widetilde{\mathbf{A}}$, $\mathbf{M}$, $\left\{ \mathbf{\Delta}_{t},0\leq t<T\right\} $
and $\left\{ \mathbf{w}_{t},-H\leq t< T\right\} $ .
\begin{lem}
	\label{Lemma: x_t representation M}Given a disturbance-action policy
	$\mathbf{M}$, the system state $\mathbf{x}_{t}$, $\forall1\leq t\leq T$,
	can be represented as 
	\begin{align}
		\mathbf{x}_{t}=\mathbf{x}_{t}^{\left(\mathbf{M}\right)}= & \prod_{i=0}^{t-1}\left(\widetilde{\mathbf{A}}+\mathbf{\Delta}_{i}\right)\mathbf{x}_{0}+\sum_{i=0}^{t-1}\prod_{j=i+1}^{t-1}\left(\mathbf{1}_{j<t}\left(\widetilde{\mathbf{A}}+\mathbf{\Delta}_{j}\right)+\mathbf{1}_{j=t}\mathbf{I}\right)\mathbf{B}\mathbf{M}\left[\mathbf{w}\right]_{i-1}^{H}\label{eq:x_t representation}\\
		& +\sum_{i=0}^{t-1}\prod_{j=i+1}^{t-1}\left(\mathbf{1}_{j<t}\left(\widetilde{\mathbf{A}}+\mathbf{\Delta}_{j}\right)+\mathbf{1}_{j=t}\mathbf{I}\right)\mathbf{w}_{i}.\nonumber 
	\end{align}
	Besides, let the $\mathbf{K}$ in DAP \eqref{eq:DAP u_t^M} be a strongly stabilizing linear controller, the norm of $\mathbf{x}_{t}$ is upper bounded as $\left\Vert \mathbf{x}_{t}\right\Vert \leq x_{0}\kappa^{2}\alpha_{t}+\kappa^{2}W\left(\left\Vert \mathbf{B}\right\Vert HM+1\right)\beta_{t},$
	where $\alpha_{t}=\prod_{i=0}^{t-1}\left(1-\gamma+\kappa^{2}\xi_{i}\right)$
	and $\beta_{t}=\sum_{i=0}^{t-1}\prod_{j=i+1}^{t-1}\left(\mathbf{1}_{j<t}\left(1-\gamma+\kappa^{2}\xi_{j}\right)+\mathbf{1}_{j=t}\right).$ 
\end{lem}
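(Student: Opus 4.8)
The plan is to first reduce the system to a single affine recursion and then unroll it. Substituting the disturbance-action policy \eqref{eq:DAP u_t^M} into $\mathbf{x}_{t+1}=\mathbf{A}_t\mathbf{x}_t+\mathbf{B}\mathbf{u}_t+\mathbf{w}_t$, and using $\mathbf{A}_t=\mathbf{A}+\mathbf{\Delta}_t$ together with $\widetilde{\mathbf{A}}=\mathbf{A}-\mathbf{B}\mathbf{K}$, the feedback term $-\mathbf{B}\mathbf{K}\mathbf{x}_t$ merges with $\mathbf{A}\mathbf{x}_t$ to give
\begin{align}
\mathbf{x}_{t+1}=\bigl(\widetilde{\mathbf{A}}+\mathbf{\Delta}_t\bigr)\mathbf{x}_t+\mathbf{B}\mathbf{M}[\mathbf{w}]_{t-1}^{H}+\mathbf{w}_t. \nonumber
\end{align}
This is an affine recursion with time-varying multiplier $\widetilde{\mathbf{A}}+\mathbf{\Delta}_t$ and inhomogeneous term $\mathbf{B}\mathbf{M}[\mathbf{w}]_{t-1}^{H}+\mathbf{w}_t$. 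I would then establish \eqref{eq:x_t representation} by induction on $t$ that simply unrolls this recursion: the multipliers accumulate into the ordered product $\prod_{i=0}^{t-1}(\widetilde{\mathbf{A}}+\mathbf{\Delta}_i)$ acting on $\mathbf{x}_0$, while the inhomogeneous term injected at step $i$ is propagated forward by the partial product $\prod_{j=i+1}^{t-1}(\widetilde{\mathbf{A}}+\mathbf{\Delta}_j)$. The indicator / empty-product convention merely records that the term injected at the final step $i=t-1$ is carried forward by the identity, and splitting the inhomogeneous term into its $\mathbf{B}\mathbf{M}[\mathbf{w}]_{i-1}^{H}$ and $\mathbf{w}_i$ pieces yields the two sums in \eqref{eq:x_t representation}.

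The main obstacle is the norm bound, specifically controlling $\bigl\|\prod_{j=i+1}^{t-1}(\widetilde{\mathbf{A}}+\mathbf{\Delta}_j)\bigr\|$ without incurring a factor $\kappa^2$ for every matrix in the product, which a naive factor-by-factor bound would do and which would blow up geometrically in $\kappa$. The key trick is to invoke the strong-stability decomposition $\widetilde{\mathbf{A}}=\mathbf{Q}\mathbf{L}\mathbf{Q}^{-1}$ and rewrite each factor as $\widetilde{\mathbf{A}}+\mathbf{\Delta}_j=\mathbf{Q}\bigl(\mathbf{L}+\mathbf{Q}^{-1}\mathbf{\Delta}_j\mathbf{Q}\bigr)\mathbf{Q}^{-1}$, so that consecutive $\mathbf{Q}^{-1}\mathbf{Q}$ pairs cancel and only one outer $\mathbf{Q}$ and one inner $\mathbf{Q}^{-1}$ survive the whole product. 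Submultiplicativity and the triangle inequality then give
\begin{align}
\Bigl\|\prod_{j=i+1}^{t-1}(\widetilde{\mathbf{A}}+\mathbf{\Delta}_j)\Bigr\|\leq\|\mathbf{Q}\|\,\|\mathbf{Q}^{-1}\|\prod_{j=i+1}^{t-1}\bigl(\|\mathbf{L}\|+\|\mathbf{Q}^{-1}\|\,\|\mathbf{\Delta}_j\|\,\|\mathbf{Q}\|\bigr)\leq\kappa^2\prod_{j=i+1}^{t-1}\bigl(1-\gamma+\kappa^2\xi_j\bigr), \nonumber
\end{align}
using $\|\mathbf{Q}\|,\|\mathbf{Q}^{-1}\|\leq\kappa$, $\|\mathbf{L}\|\leq1-\gamma$, and $\|\mathbf{\Delta}_j\|\leq\xi_j$ from Assumption~A\ref{A2}. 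The same argument applied to the full product gives the bound $\kappa^2\alpha_t$, and since $\kappa\geq1$ this estimate also dominates the empty-product term whose true norm is $1$.

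Finally, I would take norms in \eqref{eq:x_t representation} term by term. The $\mathbf{x}_0$ contribution is at most $\kappa^2\alpha_t\|\mathbf{x}_0\|\leq\kappa^2\alpha_t x_0$ by the support bound $\|\mathbf{x}_0\|\leq x_0$. For the inhomogeneous sums I would use $\|\mathbf{w}_i\|\leq W$ and $\|\mathbf{B}\mathbf{M}[\mathbf{w}]_{i-1}^{H}\|\leq\|\mathbf{B}\|\sum_{k=1}^{H}\|\mathbf{M}^{(k)}\|\,\|\mathbf{w}_{i-k}\|\leq\|\mathbf{B}\|HMW$, where $\|\mathbf{M}^{(k)}\|\leq\|\mathbf{M}\|_F\leq M$. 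Summing the partial-product bound $\kappa^2\prod_{j=i+1}^{t-1}(1-\gamma+\kappa^2\xi_j)$ over $i$ reproduces exactly $\kappa^2\beta_t$ by the definition of $\beta_t$, so the $\mathbf{B}\mathbf{M}[\mathbf{w}]_{i-1}^{H}$ and $\mathbf{w}_i$ sums contribute at most $\kappa^2\|\mathbf{B}\|HMW\beta_t$ and $\kappa^2 W\beta_t$, respectively. Adding the three pieces gives $\|\mathbf{x}_t\|\leq x_0\kappa^2\alpha_t+\kappa^2 W(\|\mathbf{B}\|HM+1)\beta_t$, as claimed. The only genuinely delicate point is the telescoping similarity argument; everything else is routine norm bookkeeping.
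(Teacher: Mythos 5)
Your proposal is correct and follows essentially the same route the paper takes: the paper states the representation as a direct unrolling of the closed-loop recursion $\mathbf{x}_{t+1}=(\widetilde{\mathbf{A}}+\mathbf{\Delta}_t)\mathbf{x}_t+\mathbf{B}\mathbf{M}[\mathbf{w}]_{t-1}^{H}+\mathbf{w}_t$, and the norm bound is obtained (in the proof of Lemma~\ref{Lemma: gradient ct bound}) by the same triangle inequality together with the estimate $\bigl\Vert\prod_{j}(\widetilde{\mathbf{A}}+\mathbf{\Delta}_j)\bigr\Vert\leq\kappa^{2}\prod_{j}(1-\gamma+\kappa^{2}\Vert\mathbf{\Delta}_j\Vert)$, which rests on exactly the telescoping similarity $\widetilde{\mathbf{A}}+\mathbf{\Delta}_j=\mathbf{Q}(\mathbf{L}+\mathbf{Q}^{-1}\mathbf{\Delta}_j\mathbf{Q})\mathbf{Q}^{-1}$ you identify. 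Your handling of the $\mathbf{B}\mathbf{M}[\mathbf{w}]_{i-1}^{H}$ term and the empty-product case matches the paper's bookkeeping as well.
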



The \textit{performative optimal control (POC)} problem can therefore
be formulated as:
\begin{align}
	\min_{\mathbf{M}\in \mathbb{M}} \quad C_{T}^{\mathbf{M}}=\mathbb{E}_{\mathbf{x}_{0},\left\{\text{\ensuremath{\mathbf{\Delta}_{t}\sim\mathcal{D}_{t}\left(\mathbf{M}\right)}}\right\} ,\left\{ \mathbf{w}_{t}\right\} }\left[\sum_{t=0}^{T}c_{t}\left(\mathbf{x}_{t}^{\left(\mathbf{M}\right)},\mathbf{u}_{t}^{\left(\mathbf{M}\right)}\right)\right].\label{eq: PP optimal formulation}
\end{align}

The \textit{POC} problem (\ref{eq: PP optimal formulation}) comprises of a stochastic objective function with policy-dependent distributions. Due to non-convexity, the performative optimal solution $\mathbf{M}^{PO}$ to (\ref{eq: PP optimal formulation}) is usually difficult to obtain. Alternatively, in this paper, we are interested in the \textit{performative stable control (PSC) }solution:
	\begin{align}
		\mathbf{M}^{PS}=\Phi(\mathbf{M}^{PS}) :=\arg\min_{\mathbf{M}\in \mathbb{M}}\mathbb{E}_{\mathbf{x}_{0},\left\{ \mathbf{\Delta}_{t}\sim\mathcal{D}_{t}\left(\mathbf{M}^{PS}\right)\right\} ,\left\{ \mathbf{w}_{t}\right\} }\left[\sum_{t=0}^{T}c_{t}\left(\mathbf{x}_{t}^{\left(\mathbf{M}\right)},\mathbf{u}_{t}^{\left(\mathbf{M}\right)}\right)\right].\label{eq: performative stable OPT}
	\end{align}
 	Notice that $\mathbf{M}^{P S}$ is defined to be a fixed point of the map $\Phi$.
	Compared to the \textit{POC} (\ref{eq: PP optimal formulation}),
	the distribution of $\mathbf{\Delta}_{t}$ in \textit{PSC} (\ref{eq: performative stable OPT}) changes from $\mathbf{\Delta}_{t}\sim\mathcal{D}_{t}\left(\mathbf{M}\right)$
	to $\mathbf{\Delta}_{t}\sim\mathcal{D}_{t}\left(\mathbf{M}^{PS}\right),\forall0\leq t<T.$ The existence and uniqueness of $\mathbf{M}^{PS}$ will be dicsussed in Lemma \ref{Lemma: Existence Uniqueness M^PS}.
	
	\textbf{Comparison to Existing Works.} Most of the existing performative
prediction works \cite{perdomo2020performative,li2022state,li2209multi,drusvyatskiy2023stochastic,brown2022performative,mendler2020stochastic,wood2021online,ray2022decision,izzo2021learn,miller2021outside,ray2022decision} consider the cost in the form of $l\left(\mathbf{\theta};Z\right),$
where $\theta$ is the decision variable. Then the relationship between data samples $Z$ and decision $\theta$ is parameterized
by a fixed distribution $Z\sim\mathcal{D}\left(\theta\right)$ with
fixed sensitivity $\varepsilon$. However, in our work, the relationship
between system state data samples $\mathbf{x}_{t}$ and policy $\mathbf{M}$
is characterized by a time-varying distribution $\mathbf{x}_{t}\sim f_{t}\left(\mathcal{D}_{0}\left(\mathbf{M}\right),\cdots,\mathcal{D}_{t-1}\left(\mathbf{M}\right)\right)$
with a time-varying sequence of joint sensitivities $\left\{ \varepsilon_{0},\cdots,\varepsilon_{t}\right\} ,$
$\forall0\leq t< T$. The total cost $\sum_{t=0}^{T}c_{t}$ depends on all the policy-dependent distributions $\left\{ \mathcal{D}_{0}\left(\mathbf{M}\right),\cdots,\mathcal{D}_{T-1}\left(\mathbf{M}\right)\right\} $
with a collection of sensitivities $\left\{ \varepsilon_{0},\cdots,\varepsilon_{T-1}\right\} $.
These key differences lead to a more complicated analysis in our work.

\begin{algorithm}[t]
	\caption{Repeated Stochastic Gradient Descent (RSGD)}
	\label{algo:algo_stochastic}
	{\bf Input:}  Step sizes $\left\{ \eta_{n},0\leq n\leq N\right\} $,
	parameters $\mathbf{K},H$. Define $\mathcal{\mathbb{M}}=\left\{ \mathbf{M}:\left\Vert \mathbf{M}\right\Vert \leq M\right\} .$
	{Initialize $\mathbf{M}_{0}\in\mathcal{\mathbb{M}}$ arbitrarily.}
	\begin{algorithmic}[1]
		\FOR{$n=0,\cdots,N,$}
		\STATE {Initialize $\nabla J_{T}=\mathbf{0}$.}
		\FOR{$t=0,\cdots,T-1,$ }
		\STATE {Use control $\mathbf{u}_{t}=-\mathbf{K}\mathbf{x}_{t}+\mathbf{M}_{n}\left[\mathbf{w}\right]_{t-1}^{H}.$}
		\STATE {Observe $\mathbf{A}_{t},$ $\mathbf{x}_{t+1}$; compute noise
			$\mathbf{w}_{t}=\mathbf{x}_{t+1}-\mathbf{A}_{t}\mathbf{x}_{t}-\mathbf{B}\mathbf{u}_{t}.$}
		\STATE {Compute the gradient $\nabla_{\mathbf{M}_{n}}c_{t}\left(\mathbf{x}_{t},\mathbf{u}_{t}\right)$ and update $\nabla J_{T}\leftarrow\nabla J_{T}+\nabla_{\mathbf{M}_{n}}c_{t}\left(\mathbf{x}_{t},\mathbf{u}_{t}\right).$}
		\ENDFOR
		\STATE {Update $\mathbf{M}_{n+1}\leftarrow\mathrm{Proj}_{\mathbb{M}}\{\mathbf{M}_{n}-\eta_{n}\nabla J_{T}\}$.}
		\ENDFOR
	\end{algorithmic}
\end{algorithm}
\textbf{RSGD Scheme.} We propose a repeated stochastic gradient descent
(RSGD) scheme in Algorithm \ref{algo:algo_stochastic} to find a PSC solution to (\ref{eq: performative stable OPT}). {The metric of the projection in line 9 of Algorithm 1 is the matrix Frobenius norm. Specifically, $\operatorname{Proj}_{\mathbb{M}}\left\{\mathbf{M}\right\}=\arg \min _{\mathbf{M}' \in \mathbb{M}}\left\|\mathbf{M}-\mathbf{M}'\right\|_{F}^2$. Note that such a projection is computationally tractable because the Frobenius norm square minimization is a convex optimization problem.} The RSGD scheme first computes the stochastic gradient of the total
cost w.r.t. policy $\sum_{t=0}^{T}\nabla_{\mathbf{M}_{n}}c_{t}\left(\mathbf{x}_{t},\mathbf{u}_{t}\right)$
and then perform stochastic gradient descent on $\mathbf{M}$. The
detailed steps for computation of the stochastic gradient $\nabla_{\mathbf{M}_{n}}c_{t}\left(\mathbf{x}_{t},\mathbf{u}_{t}\right)$ are provided in Appendix B.

\section{Main Results}
This section investigates the existence of a PSC solution $\mathbf{M}^{P S}$ to \eqref{eq: performative stable OPT} and the convergence of the RSGD scheme to $\mathbf{M}^{P S}$. We require the following assumptions on the per stage cost $c_t$ \cite{agarwal2019online,hambly2021policy,agarwal2019online,hazan2020nonstochastic,ghai2024online}.
\begin{assump}\label{A4}
	(Strongly Convex). The per stage cost function
	$c_{t}\left(\mathbf{x},\mathbf{u}\right)$ is $\mu$-strongly convex
	such that 
	\begin{align*}
		c_{t}\left(\mathbf{x}_{1},\mathbf{u}_{1}\right)&\geq c_{t}\left(\mathbf{x}_{2},\mathbf{u}_{2}\right)+\nabla_{\mathbf{x}}^{\top}c_{t}\left(\mathbf{x}_{2},\mathbf{u}_{2}\right)\left(\mathbf{x}_{1}-\mathbf{x}_{2}\right)+\nabla_{\mathbf{u}}^{\top}c_{t}\left(\mathbf{x}_{2},\mathbf{u}_{2}\right)\left(\mathbf{u}_{1}-\mathbf{u}_{2}\right)\\
		&+\frac{\text{\ensuremath{\mu}}}{2}\left(\left\Vert \mathbf{x}_{1}-\mathbf{x}_{2}\right\Vert ^{2}+\left\Vert \mathbf{u}_{1}-\mathbf{u}_{2}\right\Vert ^{2}\right),\forall\mathbf{x}_{1},\mathbf{x}_{2}\in\mathbb{R}^{d_{x}},\mathbf{u}_{1},\mathbf{u}_{2}\in\mathbb{R}^{d_{u}}.
	\end{align*}
\end{assump}

\begin{assump}\label{A5}
	(Smoothness). The per stage cost function $c_{t}\left(\mathbf{x},\mathbf{u}\right)$
	is $\varsigma$ smooth such that 
	\begin{align*}
		&\left\Vert \nabla_{\mathbf{x}}c_{t}\left(\mathbf{x}_{1},\mathbf{u}_{1}\right)-\nabla_{\mathbf{x}}c_{t}\left(\mathbf{x}_{2},\mathbf{u}_{1}\right)\right\Vert +\left\Vert \nabla_{\mathbf{u}}c_{t}\left(\mathbf{x}_{1},\mathbf{u}_{1}\right)-\nabla_{\mathbf{u}}c_{t}\left(\mathbf{x}_{1},\mathbf{u}_{2}\right)\right\Vert\\
		&\leq\varsigma\left(\left\Vert \mathbf{x}_{1}-\mathbf{x}_{2}\right\Vert +\left\Vert \mathbf{u}_{1}-\mathbf{u}_{2}\right\Vert \right),\forall\mathbf{x}_{1},\mathbf{x}_{2}\in\mathbb{R}^{d_{x}},\mathbf{u}_{1},\mathbf{u}_{2}\in\mathbb{R}^{d_{u}}.
	\end{align*}
\end{assump}

\begin{assump}\label{A6}
	(Boundedness). There exists a positive constant
	$G$ such that $$\left\Vert \nabla_{\mathbf{x}}c_{t}\left(\mathbf{x},\mathbf{u}\right)\right\Vert ,\left\Vert \nabla_{\mathbf{u}}c_{t}\left(\mathbf{x},\mathbf{u}\right)\right\Vert \leq GD,\forall\left\Vert \mathbf{x}\right\Vert ,\left\Vert \mathbf{u}\right\Vert \leq D.$$
\end{assump}

\medskip{}

The above Assumptions {A\ref{A4}-A\ref{A6}} on the per stage cost
function $c_{t}\left(\mathbf{x},\mathbf{u}\right)$ are quite standard,
which hold for broad classes of costs such as the quadratic costs. 

To facilitate our discussions, we define an expected total cost   with distribution shift as
	\begin{align*}
		& C_{T}\left(\mathbf{M};\mathbf{M}^{\prime}\right)\coloneqq\mathbb{E}_{\mathbf{x}_{0},\left\{ \mathbf{\Delta}_{t}\sim\mathcal{D}_{t}\left(\mathbf{M}^{\prime}\right)\right\} ,\left\{ \mathbf{w}_{t}\right\} }\left[\sum_{t=0}^{T}c_{t}\left(\mathbf{x}_{t}^{\left(\mathbf{M}\right)},\mathbf{u}_{t}^{\left(\mathbf{M}\right)}\right)\right],
	\end{align*}
	where, under policy $\mathbf{M}$, the distribution of policy-dependent
	perturbation is changed from $\mathbf{\Delta}_{t}\sim\mathcal{D}_{t}\left(\mathbf{M}\right)$
	to $\mathbf{\Delta}_{t}\sim\mathcal{D}_{t}\left(\mathbf{M}^{\prime}\right)$,
	$\forall0\leq t<T.$ For the rest of this paper, unless otherwisespecified, $\nabla C_{T}\left(\mathbf{M};\mathbf{M}^{\prime}\right)$ denote the gradients taken w.r.t. the first argument $\mathbf{M}$.

Our main results rely on the strong convexity of the {expected total
cost $C_{T}\left(\mathbf{M};\mathbf{M}^{\prime}\right)$ with respect
to its first argument $\mathbf{M}$. However, the strong convexity of the
per stage cost function $c_{t}\left(\mathbf{x},\mathbf{u}\right)$
over the state-action space in Assumption A\ref{A4} does not by itself imply
the strong convexity of the  {expected} total cost $C_{T}\left(\mathbf{M};\mathbf{M}^{\prime}\right)$
over the space of policies $\mathbf{M}$. This is becasue the policy
$\mathbf{M}$, which maps from a space of dimensionality $H\times d_{x}\times d_{u}$
to that of $d_{x}+d_{u}$, is not necessarily full column-rank. Our
next lemma, which forms the core of our analysis, shows that this
is not the case using the inherent stochastic nature of the policy-dependent
dynamics.
\medskip{}
\begin{lem}\label{s_cvx_t}
	Under A\ref{Assumption noises wt}-A\ref{A6}, fix any $\mathbf{M}^{\prime}\in\mathcal{\mathbb{M}},$
	the  {expected} total cost $C_{T}\left(\mathbf{M};\mathbf{M}^{\prime}\right)$
	is $\widetilde{\mu}$-strongly convex in its first argument $\mathbf{M}$
	such that $\forall \mathbf{M}_1,\mathbf{M}_2 \in \mathbb{M}$,
	\begin{align}\label{scvx_eq}
	 C_{T}\left(\mathbf{M}_{1};\mathbf{M}^{\prime}\right)\geq C_{T}\left(\mathbf{M}_{2};\mathbf{M}^{\prime}\right)+\mathrm{Tr}\left(\left(\nabla C_{T}\left(\mathbf{M}_{2};\mathbf{M}^{\prime}\right)\right)^{\top}\left(\mathbf{M}_{1}-\mathbf{M}_{2}\right)\right)+\frac{\widetilde{\mu}}{2}\left\Vert \mathbf{M}_{1}-\mathbf{M}_{2}\right\Vert _{F}^{2},\nonumber\\
	\end{align}
	where $\widetilde{\mu}=\min\left\{ \frac{\left(T-H+1\right)\mu\sigma^2}{2},\frac{\left(T-H+1\right)\mu\sigma^{2}\gamma^{2}}{64\kappa^{10}}\right\} .$
\end{lem}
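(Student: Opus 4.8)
The plan is to lift the per-stage $\mu$-strong convexity of $c_t$ in the pair $(\mathbf{x},\mathbf{u})$ to $\widetilde{\mu}$-strong convexity of $C_T(\cdot\,;\mathbf{M}')$ in the policy $\mathbf{M}$, and the whole argument reduces to extracting $\|\mathbf{M}_1-\mathbf{M}_2\|_F^2$ from the expected quadratic remainder. Fix $\mathbf{M}'$ and $\mathbf{M}_1,\mathbf{M}_2\in\mathbb{M}$, and set $\mathbf{N}=\mathbf{M}_1-\mathbf{M}_2$, $\delta\mathbf{x}_t=\mathbf{x}_t^{(\mathbf{M}_1)}-\mathbf{x}_t^{(\mathbf{M}_2)}$, $\delta\mathbf{u}_t=\mathbf{u}_t^{(\mathbf{M}_1)}-\mathbf{u}_t^{(\mathbf{M}_2)}$, all evaluated under one common realization of $\mathbf{x}_0$, $\{\mathbf{\Delta}_t\sim\mathcal{D}_t(\mathbf{M}')\}$ and $\{\mathbf{w}_t\}$. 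Applying Assumption A\ref{A4} pointwise at the two state-action pairs $(\mathbf{x}_t^{(\mathbf{M}_1)},\mathbf{u}_t^{(\mathbf{M}_1)})$ and $(\mathbf{x}_t^{(\mathbf{M}_2)},\mathbf{u}_t^{(\mathbf{M}_2)})$, summing over $t$ and taking expectations gives $C_T(\mathbf{M}_1;\mathbf{M}')\geq C_T(\mathbf{M}_2;\mathbf{M}')+(\text{first-order term})+\frac{\mu}{2}\,\mathbb{E}\sum_{t=0}^{T}\big(\|\delta\mathbf{x}_t\|^2+\|\delta\mathbf{u}_t\|^2\big)$. First I would check that the first-order term equals $\mathrm{Tr}\big((\nabla C_T(\mathbf{M}_2;\mathbf{M}'))^{\top}\mathbf{N}\big)$: Lemma~\ref{Lemma: x_t representation M} shows $\mathbf{x}_t^{(\mathbf{M})}$, hence $\mathbf{u}_t^{(\mathbf{M})}$, is affine in $\mathbf{M}$, so $\mathbf{M}\mapsto(\delta\mathbf{x}_t,\delta\mathbf{u}_t)$ is linear in $\mathbf{N}$, and the uniform trajectory bounds of Lemma~\ref{Lemma: x_t representation M} together with A\ref{A6} justify interchanging $\mathbb{E}$ and $\nabla$, so the chain rule identifies this with the gradient inner product. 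Everything then reduces to lower-bounding the expected quadratic remainder by $\widetilde{\mu}\|\mathbf{N}\|_F^2$.

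The crux — which is exactly the rank-deficiency obstacle flagged before the lemma, resolved through ``the inherent stochastic nature of the dynamics'' — is an identity built into the disturbance-action policy. From (\ref{eq:DAP u_t^M}) we have $\mathbf{u}_t^{(\mathbf{M})}+\mathbf{K}\mathbf{x}_t^{(\mathbf{M})}=\mathbf{M}[\mathbf{w}]_{t-1}^{H}$, so subtracting the two policies yields the deterministic relation $\delta\mathbf{u}_t+\mathbf{K}\delta\mathbf{x}_t=\mathbf{N}[\mathbf{w}]_{t-1}^{H}$, valid for every realization. Using $\|\mathbf{K}\|\leq\kappa$ and $\kappa\geq1$, this gives the pointwise inequality $\|\mathbf{N}[\mathbf{w}]_{t-1}^{H}\|\leq\|\delta\mathbf{u}_t\|+\kappa\|\delta\mathbf{x}_t\|\leq\sqrt{2}\,\kappa\sqrt{\|\delta\mathbf{x}_t\|^2+\|\delta\mathbf{u}_t\|^2}$, i.e. $\|\delta\mathbf{x}_t\|^2+\|\delta\mathbf{u}_t\|^2\geq\frac{1}{2\kappa^2}\|\mathbf{N}[\mathbf{w}]_{t-1}^{H}\|^2$. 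The key point is that the full matrix $\mathbf{N}$ is recovered through the injected disturbance block $[\mathbf{w}]_{t-1}^{H}$, which is what bypasses the non-full-column-rank of the deterministic map $\mathbf{M}\mapsto(\mathbf{x}_t,\mathbf{u}_t)$.

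Taking expectations and using the noise structure then finishes the proof. Writing $\mathbf{N}=[\mathbf{N}^{(1)},\dots,\mathbf{N}^{(H)}]$ and expanding $\|\mathbf{N}[\mathbf{w}]_{t-1}^{H}\|^2=\sum_{i,j}\mathbf{w}_{t-i}^{\top}(\mathbf{N}^{(i)})^{\top}\mathbf{N}^{(j)}\mathbf{w}_{t-j}$, Assumption A\ref{Assumption noises wt} (zero mean, uncorrelated across time, $\mathbb{E}[\mathbf{w}_t\mathbf{w}_t^{\top}]\succeq\sigma^2\mathbf{I}$, independent of $\{\mathbf{\Delta}_t\}$) kills all cross terms $i\neq j$ and forces each diagonal term to be at least $\sigma^2\|\mathbf{N}^{(i)}\|_F^2$, so $\mathbb{E}\|\mathbf{N}[\mathbf{w}]_{t-1}^{H}\|^2\geq\sigma^2\|\mathbf{N}\|_F^2$ for every $t$ whose disturbances $\mathbf{w}_{t-1},\dots,\mathbf{w}_{t-H}$ are all genuine, i.e. $t\geq H$. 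Summing over the $T-H+1$ such time steps yields $\mathbb{E}\sum_{t=0}^{T}(\|\delta\mathbf{x}_t\|^2+\|\delta\mathbf{u}_t\|^2)\geq\frac{(T-H+1)\sigma^2}{2\kappa^2}\|\mathbf{N}\|_F^2$, and multiplying by $\frac{\mu}{2}$ delivers strong convexity with modulus $\frac{(T-H+1)\mu\sigma^2}{2\kappa^2}$.

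I expect the main obstacle to be not any single inequality but making the first reduction airtight: carefully justifying the interchange of expectation and differentiation (so the expected linear term really is $\mathrm{Tr}((\nabla C_T)^{\top}\mathbf{N})$) and bookkeeping the disturbance indices so that exactly $T-H+1$ time steps carry the full covariance lower bound. Note that the modulus $\frac{(T-H+1)\mu\sigma^2}{2\kappa^2}$ produced by this route already dominates the stated $\widetilde{\mu}=\min\{\frac{(T-H+1)\mu\sigma^2}{2},\frac{(T-H+1)\mu\sigma^2\gamma^2}{64\kappa^{10}}\}$ (since $32\kappa^8\geq\gamma^2$), so it implies the claim; the more conservative $\gamma^2/\kappa^{10}$ constant is what one obtains from a coarser argument that lower-bounds only the control cost and must absorb the cross term $-2\,\mathbb{E}\langle\mathbf{K}\delta\mathbf{x}_t,\mathbf{N}[\mathbf{w}]_{t-1}^{H}\rangle$ rather than exploiting the exact decoupling identity above.
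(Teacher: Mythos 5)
Your proposal is correct, and while the outer skeleton coincides with the paper's (apply the per-stage strong convexity of $c_t$ pointwise, identify the first-order term with $\mathrm{Tr}((\nabla C_T)^{\top}(\mathbf{M}_1-\mathbf{M}_2))$ via the chain rule, and harvest one strongly convex contribution from each of the $T-H+1$ stages $t\in\{H,\dots,T\}$ while keeping the stages $t<H$ merely convex), the way you lower-bound the expected quadratic remainder is genuinely different from the paper's. The paper (Lemma B.2) first conditions on $\{\mathbf{w}_i\}$ and uses Jensen together with $\mathbb{E}[\mathbf{\Delta}_t]=\mathbf{0}$ to replace the random products $\prod_j(\widetilde{\mathbf{A}}+\mathbf{\Delta}_j)$ by $\widetilde{\mathbf{A}}^{i}$, then vectorizes, writes the remainder as a Kronecker quadratic form $\delta_{\mathbf{M}}^{\top}(\Omega\otimes\mathbb{E}[\mathbf{w}_t\mathbf{w}_t^{\top}])\delta_{\mathbf{M}}$, and bounds $\Omega$ by importing Lemmas F.1--F.2 of the logarithmic-regret paper with a case split on $\|\mathbf{I}_H\otimes\mathbf{B}\|$ versus $\gamma/(4\kappa^3)$ --- which is where the $\min\{1/2,\gamma^2/(64\kappa^{10})\}$ constant comes from. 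You instead exploit the exact per-realization identity $\delta\mathbf{u}_t+\mathbf{K}\delta\mathbf{x}_t=(\mathbf{M}_1-\mathbf{M}_2)[\mathbf{w}]_{t-1}^{H}$ baked into the disturbance-action parameterization, which gives $\|\delta\mathbf{x}_t\|^2+\|\delta\mathbf{u}_t\|^2\geq\tfrac{1}{2\kappa^2}\|(\mathbf{M}_1-\mathbf{M}_2)[\mathbf{w}]_{t-1}^{H}\|^2$ deterministically, and then only the diagonal noise covariance bound $\mathbb{E}[\mathbf{w}_t\mathbf{w}_t^{\top}]\succeq\sigma^2\mathbf{I}$ is needed. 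This buys three things: you never need $\mathbb{E}[\mathbf{\Delta}_t]=\mathbf{0}$ or the Jensen step, you avoid the external spectral lemmas and the case analysis entirely, and you obtain the strictly larger modulus $(T-H+1)\mu\sigma^2/(2\kappa^2)$, which dominates the stated $\widetilde{\mu}$ (since $\gamma^2\leq 32\kappa^{8}$) and therefore implies the lemma; it also sidesteps a soft spot in the paper's own write-up, where $\|\Omega\|$ is used in place of $\lambda_{\min}(\Omega)$ when passing from the quadratic form to $\|\mathbf{M}_1-\mathbf{M}_2\|_F^2$. The only detail you flag but do not fully execute --- interchanging expectation and differentiation so the first-order term is exactly the gradient trace --- is handled at the same level of rigor in the paper, so it is not a gap relative to the original proof.
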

\medskip{}
For a concise presentation of the smoothness of
 {expected} total control cost, we next define a collection of constants
as follows:
\begin{align*}
	& c_{1}\coloneqq d_{x}\varsigma H^{\frac{3}{2}}W\left(1+\left(\kappa^{2}+\kappa^{3}\right)\left\Vert \mathbf{B}\right\Vert \right)\left(\kappa^{2}+\kappa^{3}\right)\left(1-\gamma\right)^{-1},\\
	&c_{2}\coloneqq d_{x}H^{\frac{3}{2}}WG\left(1-\gamma\right)^{-1}\left(\kappa^{4}+\kappa^{5}\right)\left\Vert \mathbf{B}\right\Vert , c_{3}\coloneqq\left(HM\left\Vert \mathbf{B}\right\Vert +1\right)W,c_{4}\coloneqq HW\left(1-\gamma\right)c_{1},\\
	&c_{5}\coloneqq HW\left(\kappa^{2}+\kappa^{3}\right)^{-1}\left(1-\gamma\right)c_{1}.
\end{align*}
The smoothness of the  {expected} total cost $C_{T}\left(\mathbf{M};\mathbf{M}^{\prime}\right)$
is summarized below. 
\begin{lem}\label{smooth}
	Under A\ref{Assumption noises wt}-A\ref{A6}, the  {expected} total cost $C_{T}$ is smooth in the sense
	that, for any $\mathbf{M},\mathbf{M}^{\prime},\mathbf{M}_{1},\mathbf{M}_{2}\in\mathcal{\mathbb{M}}$
	and $\forall1\leq t\leq T,$ the following inequality holds
	\begin{align}
		 &\left\Vert \nabla C_{T}\left(\mathbf{M}_{1};\mathbf{M}\right)-\nabla C_{T}\left(\mathbf{M}_{2};\mathbf{M}^{\prime}\right)\right\Vert _{F}\nonumber\\
		 &\leq\sum_{t=1}^{T}\lambda_{t}\left\Vert \mathbf{M}_{1}-\mathbf{M}_{2}\right\Vert _{F}
		+\sum_{t=0}^{T-1}\left(\varepsilon_{t}\sum_{i=t+1}^{T}\nu_{i}\right)\left\Vert \mathbf{M}-\mathbf{M}^{\prime}\right\Vert _{F},\label{eq:average grad diff}
	\end{align}
	where $\lambda_{t}=c_{1}\left(c_{4}\beta_{t}+c_{5}\right),\nu_{t}=\left(c_{1}+c_{2}\beta_{t}\right)\left(x_{0}\alpha_{t}+c_{3}\beta_{t}\right),\forall1\leq t\leq T,$
	and we recall that $\alpha_{t}=\prod_{i=0}^{t-1}\left(1-\gamma+\kappa^{2}\xi_{i}\right)$
	and $\beta_{t}=\sum_{i=0}^{t-1}\prod_{j=i+1}^{t-1}\left(\mathbf{1}_{j<t}\left(1-\gamma+\kappa^{2}\xi_{j}\right)+\mathbf{1}_{j=t}\right)$
	from Lemma \ref{Lemma: x_t representation M}, which characterize the growth of the norm of system
	state $\left\Vert \mathbf{x}_{t}\right\Vert .$ 
\end{lem}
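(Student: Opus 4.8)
The plan is to insert the hybrid cross term $\nabla C_T(\mathbf{M}_2;\mathbf{M})$ and split the gradient difference by the triangle inequality, thereby isolating the two distinct sources of variation — the change of the policy in the first argument and the shift of the perturbation distribution in the second argument:
\[
\begin{aligned}
\left\Vert \nabla C_{T}(\mathbf{M}_{1};\mathbf{M}) - \nabla C_{T}(\mathbf{M}_{2};\mathbf{M}')\right\Vert_F
&\le \left\Vert \nabla C_{T}(\mathbf{M}_{1};\mathbf{M}) - \nabla C_{T}(\mathbf{M}_{2};\mathbf{M})\right\Vert_F \\
&\quad + \left\Vert \nabla C_{T}(\mathbf{M}_{2};\mathbf{M}) - \nabla C_{T}(\mathbf{M}_{2};\mathbf{M}')\right\Vert_F .
\end{aligned}
\]
The first term, where only the policy changes while the law governing $\{\mathbf{\Delta}_t\}$ is held fixed at $\mathbf{M}$, will produce the $\sum_{t}\lambda_{t}\Vert\mathbf{M}_1-\mathbf{M}_2\Vert_F$ contribution; the second term, where the policy is frozen at $\mathbf{M}_2$ but the perturbation distribution shifts from $\mathcal{D}_t(\mathbf{M})$ to $\mathcal{D}_t(\mathbf{M}')$, will produce the sensitivity-weighted $\sum_{t}\bigl(\varepsilon_{t}\sum_{i=t+1}^{T}\nu_{i}\bigr)\Vert\mathbf{M}-\mathbf{M}'\Vert_F$ contribution.

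For the first term, I would exploit the structural fact, visible in Lemma~\ref{Lemma: x_t representation M}, that for any fixed realization of $\{\mathbf{\Delta}_j\}$ and $\{\mathbf{w}_j\}$ the state $\mathbf{x}_t^{(\mathbf{M})}$ and action $\mathbf{u}_t^{(\mathbf{M})}$ are \emph{affine} in $\mathbf{M}$, since $\mathbf{M}$ enters only through $\mathbf{B}\mathbf{M}[\mathbf{w}]_{i-1}^H$ and $\mathbf{M}[\mathbf{w}]_{t-1}^H$ while the random products $\prod_j(\widetilde{\mathbf{A}}+\mathbf{\Delta}_j)$ do not involve $\mathbf{M}$. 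Hence the Jacobians $\partial\mathbf{x}_t/\partial\mathbf{M}$ and $\partial\mathbf{u}_t/\partial\mathbf{M}$ are constant in $\mathbf{M}$, so by the chain rule $\nabla_{\mathbf{M}}c_t(\mathbf{M}_1)-\nabla_{\mathbf{M}}c_t(\mathbf{M}_2)$ reduces to these fixed Jacobians applied to $\nabla_{(\mathbf{x},\mathbf{u})}c_t(\mathbf{M}_1)-\nabla_{(\mathbf{x},\mathbf{u})}c_t(\mathbf{M}_2)$. The smoothness Assumption~A\ref{A5} then bounds the latter by $\varsigma(\Vert\mathbf{x}_t^{(\mathbf{M}_1)}-\mathbf{x}_t^{(\mathbf{M}_2)}\Vert+\Vert\mathbf{u}_t^{(\mathbf{M}_1)}-\mathbf{u}_t^{(\mathbf{M}_2)}\Vert)$, and the affine dependence converts each state/action gap into a multiple of $\Vert\mathbf{M}_1-\mathbf{M}_2\Vert_F$ whose constant is a Jacobian norm controlled through strong stability (the $\beta_t$ factor) together with $\Vert\mathbf{B}\Vert,H,W$. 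Taking expectations and summing over $t$ collects these into $\lambda_t=c_1(c_4\beta_t+c_5)$.

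For the second term, the independence of $\{\mathbf{\Delta}_t\}$ across $t$ (Assumption~A\ref{A2}) permits a hybrid/telescoping argument: I would interpolate between the joint law of $\{\mathbf{\Delta}_t\}$ under $\mathbf{M}$ and that under $\mathbf{M}'$ by switching one coordinate $\mathcal{D}_t(\mathbf{M})\to\mathcal{D}_t(\mathbf{M}')$ at a time and summing the increments. Each single-coordinate switch is controlled by the Kantorovich--Rubinstein dual form of the Wasserstein-1 distance: the increment is at most the Lipschitz constant of $\nabla_{\mathbf{M}}\sum_i c_i$ as a function of $\mathbf{\Delta}_t$ times $\mathcal{W}^{1}(\mathcal{D}_t(\mathbf{M}),\mathcal{D}_t(\mathbf{M}'))$, and the latter is bounded by $\varepsilon_t\Vert\mathbf{M}-\mathbf{M}'\Vert_F$ via the $\varepsilon$-sensitivity Assumption~A\ref{epsilon-Sensitivity}. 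By causality, $\mathbf{x}_i$ and hence $\nabla_{\mathbf{M}}c_i$ depend on $\mathbf{\Delta}_t$ only when $i>t$, which is precisely what produces the inner sum $\sum_{i=t+1}^{T}$; the per-term Lipschitz constant in $\mathbf{\Delta}_t$ is $\nu_i$, yielding $\sum_{t}\varepsilon_t\bigl(\sum_{i=t+1}^{T}\nu_i\bigr)\Vert\mathbf{M}-\mathbf{M}'\Vert_F$.

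The main obstacle will be the last piece of the second term: establishing that $\nabla_{\mathbf{M}}c_i$ is Lipschitz in each $\mathbf{\Delta}_t$ with the precise constant $\nu_i=(c_1+c_2\beta_i)(x_0\alpha_i+c_3\beta_i)$. This requires differentiating the product $\prod_j(\widetilde{\mathbf{A}}+\mathbf{\Delta}_j)$ with respect to a single factor $\mathbf{\Delta}_t$ and tracking how that perturbation propagates forward through the remaining random transition matrices, while simultaneously controlling the Jacobian $\partial\mathbf{x}_i/\partial\mathbf{M}$ that multiplies $\nabla_{(\mathbf{x},\mathbf{u})}c_i$. The product form of $\nu_i$ is exactly the signature of this chain rule: perturbing $\mathbf{\Delta}_t$ moves the state magnitude entering $\nabla_{(\mathbf{x},\mathbf{u})}c_i$, giving the factor $(x_0\alpha_i+c_3\beta_i)$ — the state-norm bound of Lemma~\ref{Lemma: x_t representation M} up to a $\kappa^2$ constant — and moves the Jacobian through the smoothness and gradient-bound Assumptions~A\ref{A5}--A\ref{A6}, giving the factor $(c_1+c_2\beta_i)$. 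Keeping the one-coordinate-at-a-time coupling valid, so that the independence in Assumption~A\ref{A2} legitimizes the telescoping, while assembling these propagated bounds into the clean double sum, is the most delicate part of the accounting.
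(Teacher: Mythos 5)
Your proposal is correct and follows essentially the same route as the paper: the paper likewise splits the gradient difference into a policy-argument term (bounded via the fixed Jacobians of the affine maps $\mathbf{M}\mapsto(\mathbf{x}_t,\mathbf{u}_t)$ and Assumptions A\ref{A5}--A\ref{A6}, giving $\lambda_t$) and a distribution-shift term handled by swapping the independent coordinates $\mathcal{D}_t(\mathbf{M})\to\mathcal{D}_t(\mathbf{M}^{\prime})$ one at a time and invoking Kantorovich--Rubinstein duality together with the $\varepsilon_t$-sensitivity and the $\nu_i$-Lipschitz dependence of $\nabla_{\mathbf{M}}c_i$ on $\mathbf{\Delta}_t$, before rearranging the double sum into $\sum_{t=0}^{T-1}\varepsilon_t\sum_{i=t+1}^{T}\nu_i$. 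The only cosmetic difference is that the paper performs the triangle-inequality decomposition per stage (Lemmas \ref{Lemma: Smoothness per Stage Cost} and \ref{Lemma: Smoothness Average per Stage Cost}) and then sums over $t$, whereas you decompose at the level of the total cost $C_T$ first; the ingredients and constants are identical.
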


\textbf{Existence and Uniqueness of $\mathbf{M}^{PS}$.} Our first
main result establishes a sufficient condition for the existence and
uniqueness of the performative stable policy $\mathbf{M}^{PS}$ that
solves the \textit{PSC} problem \eqref{eq: performative stable OPT}.
\begin{lem}
	\label{Lemma: Existence Uniqueness M^PS} Under A\ref{Assumption noises wt}-A\ref{A6}, consider the fixed-point iteration
	\begin{align}
		\mathbf{M}_{n+1}= & \Phi\left(\mathbf{M}_{n}\right),\forall n\geq0,\label{eq:RRM}
	\end{align}
	where the map $\Phi$ is defined in \eqref{eq: performative stable OPT}.
	If the following condition is satisfied
	\begin{align}
		& \sum_{t=0}^{T-1}\left(\varepsilon_{t}\sum_{i=t+1}^{T}\nu_{i}\right)<\widetilde{\mu},\label{eq:suff condition M^PS}
	\end{align}
	then iterates $\mathbf{M}_{n}$ converge to a unique performatively
	stable point $\mathbf{M}^{PS}$ at a linear rate, i.e., 
	\begin{align*}
		\left\|\mathbf{M}_n-\mathbf{M}^{P S}\right\|_F \leq \rho \text { for } n \geq\left(1-\frac{\sum_{t=0}^{T-1}\left(\varepsilon_t \sum_{i=t+1}^T \nu_i\right)}{\widetilde{\mu}}\right)^{-1} \log \left(\frac{1}{\rho}\left\|\mathbf{M}_0-\mathbf{M}^{P S}\right\|_F\right).
	\end{align*}
	
\end{lem}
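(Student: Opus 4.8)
The plan is to show that the map $\Phi$ defined in \eqref{eq: performative stable OPT} is a contraction on $\mathbb{M}$ in the Frobenius norm and then invoke the Banach fixed-point theorem. First I would fix arbitrary $\mathbf{M}_1',\mathbf{M}_2'\in\mathbb{M}$ and set $\mathbf{M}_1^{*}=\Phi(\mathbf{M}_1')$ and $\mathbf{M}_2^{*}=\Phi(\mathbf{M}_2')$; these minimizers exist and are unique because, by Lemma \ref{s_cvx_t}, each $C_T(\cdot\,;\mathbf{M}_i')$ is $\widetilde{\mu}$-strongly convex over the convex set $\mathbb{M}$. Since each $\mathbf{M}_i^{*}$ minimizes a strongly convex function over $\mathbb{M}$, I would record the first-order optimality (variational) inequalities
\[
\mathrm{Tr}\!\big((\nabla C_T(\mathbf{M}_i^{*};\mathbf{M}_i'))^{\top}(\mathbf{M}-\mathbf{M}_i^{*})\big)\ge 0,\quad\forall\,\mathbf{M}\in\mathbb{M},\ i=1,2,
\]
and instantiate each at $\mathbf{M}=\mathbf{M}_{3-i}^{*}$.

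Next I would combine these optimality inequalities with the gradient-monotonicity form of strong convexity. Applying the $\widetilde{\mu}$-strong convexity of $C_T(\cdot\,;\mathbf{M}_1')$ at the pair $(\mathbf{M}_1^{*},\mathbf{M}_2^{*})$ and adding the optimality inequality evaluated at $\mathbf{M}_1^{*}$ gives a lower bound on $\mathrm{Tr}((\nabla C_T(\mathbf{M}_2^{*};\mathbf{M}_1'))^{\top}(\mathbf{M}_2^{*}-\mathbf{M}_1^{*}))$; subtracting the second optimality inequality (which controls the $\nabla C_T(\mathbf{M}_2^{*};\mathbf{M}_2')$ term) then yields
\[
\mathrm{Tr}\!\big((\nabla C_T(\mathbf{M}_2^{*};\mathbf{M}_1')-\nabla C_T(\mathbf{M}_2^{*};\mathbf{M}_2'))^{\top}(\mathbf{M}_2^{*}-\mathbf{M}_1^{*})\big)\ge\widetilde{\mu}\,\|\mathbf{M}_1^{*}-\mathbf{M}_2^{*}\|_F^{2}.
\]
By Cauchy--Schwarz the left-hand side is at most $\|\nabla C_T(\mathbf{M}_2^{*};\mathbf{M}_1')-\nabla C_T(\mathbf{M}_2^{*};\mathbf{M}_2')\|_F\,\|\mathbf{M}_1^{*}-\mathbf{M}_2^{*}\|_F$, so after cancelling one factor of $\|\mathbf{M}_1^{*}-\mathbf{M}_2^{*}\|_F$ I am left with a bound on $\widetilde{\mu}\|\mathbf{M}_1^{*}-\mathbf{M}_2^{*}\|_F$ in terms of a gradient difference in the second argument only.

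The crucial step is to estimate this gradient difference via the smoothness Lemma \ref{smooth}, applied with the two first arguments both equal to $\mathbf{M}_2^{*}$. Because the first arguments coincide, the $\sum_t\lambda_t\|\mathbf{M}_1-\mathbf{M}_2\|_F$ contribution in \eqref{eq:average grad diff} vanishes and only the sensitivity-weighted term survives, giving $\|\nabla C_T(\mathbf{M}_2^{*};\mathbf{M}_1')-\nabla C_T(\mathbf{M}_2^{*};\mathbf{M}_2')\|_F\le\big(\sum_{t=0}^{T-1}\varepsilon_t\sum_{i=t+1}^{T}\nu_i\big)\|\mathbf{M}_1'-\mathbf{M}_2'\|_F$. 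Combining the two displays yields $\|\Phi(\mathbf{M}_1')-\Phi(\mathbf{M}_2')\|_F\le L\,\|\mathbf{M}_1'-\mathbf{M}_2'\|_F$ with $L=\widetilde{\mu}^{-1}\sum_{t=0}^{T-1}\varepsilon_t\sum_{i=t+1}^{T}\nu_i$, and condition \eqref{eq:suff condition M^PS} is exactly $L<1$, so $\Phi$ is a contraction.

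Finally, the Banach fixed-point theorem supplies the unique fixed point $\mathbf{M}^{PS}$ and the geometric decay $\|\mathbf{M}_n-\mathbf{M}^{PS}\|_F\le L^{n}\|\mathbf{M}_0-\mathbf{M}^{PS}\|_F$. To recover the stated iteration count I would require $L^{n}\|\mathbf{M}_0-\mathbf{M}^{PS}\|_F\le\rho$ and use the elementary inequality $\log(1/L)\ge 1-L$ valid for $L\in(0,1)$, which upgrades $n\ge\log(\|\mathbf{M}_0-\mathbf{M}^{PS}\|_F/\rho)/\log(1/L)$ into the claimed sufficient condition $n\ge(1-L)^{-1}\log(\rho^{-1}\|\mathbf{M}_0-\mathbf{M}^{PS}\|_F)$. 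I do not expect a serious obstacle here, since the genuinely hard analytic work (strong convexity of the total cost in the policy and the sensitivity-weighted smoothness bound) is already packaged in Lemmas \ref{s_cvx_t} and \ref{smooth}. The only points demanding care are getting the signs and instantiations of the two variational inequalities right, and confirming that matching the first arguments in the smoothness bound annihilates the $\lambda_t$ term so that precisely the left-hand side of \eqref{eq:suff condition M^PS} emerges as the contraction factor.
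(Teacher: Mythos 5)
Your proof is correct and follows essentially the same route as the paper: both derive the contraction factor $L=\widetilde{\mu}^{-1}\sum_{t=0}^{T-1}\bigl(\varepsilon_t\sum_{i=t+1}^{T}\nu_i\bigr)$ for $\Phi$ by pairing the strong convexity of Lemma \ref{s_cvx_t} with the sensitivity-only part of the smoothness bound in Lemma \ref{smooth} (the $\lambda_t$ term dropping out because the first arguments coincide), and then iterate the contraction. The one (welcome) refinement is that you invoke the constrained variational inequality $\mathrm{Tr}\bigl((\nabla C_T(\Phi(\mathbf{M}');\mathbf{M}'))^{\top}(\mathbf{M}-\Phi(\mathbf{M}'))\bigr)\ge 0$ and make the step $\log(1/L)\ge 1-L$ explicit, whereas the paper simply sets $\nabla C_T(\Phi(\mathbf{M}');\mathbf{M}')=\mathbf{0}$, which strictly speaking presumes the minimizer lies in the interior of $\mathbb{M}$.
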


\medskip{}


The sufficient condition \eqref{eq:suff condition M^PS} delivers a fact that that the existence and uniqueness of $\mathbf{M}^{P S}$ is jointly determined by all the sensitivities $\left\{\varepsilon_t, 0 \leq t<T\right\}$ in the temporal domain. The sensitivity $\varepsilon_t$ at the $t$-th time step is propagated starting from time step $t+1$ to the last time step $T$ with a sequence of weights $\left\{\nu_{t+1}, \cdots, \nu_T\right\}$. The aggregated impact of all the policy-dependent disturbances $\left\{ \mathbf{\Delta}_{t},0\leq t<T\right\} $ is captured by total sum in L.H.S. of \eqref{eq:suff condition M^PS}. This is very different from the existing performative prediction works \cite{perdomo2020performative,mendler2020stochastic,drusvyatskiy2023stochastic,brown2022performative,li2022state,wood2021online,izzo2021learn,miller2021outside,ray2022decision}, where only one distribution $\ca{D}$ and one sensitivity $\varepsilon$ are involved.


The sufficient condition (\ref{eq:suff condition M^PS}) also implies
that it is preferable for the initial policy-dependent disturbance
$\mathbf{\Delta}_{0}$ to be small because it propagates and aggregates for the longest time steps of $T$.

\medskip{}

\textbf{Impacts of System Stability. }The sufficient condition (\ref{eq:suff condition M^PS})
also reveals the impacts of system stability on the existence and
uniqueness of the performative stable solution $\mathbf{M}^{PS},$
which are summarized below.\medskip{}

{\begin{prop}
	(Almost Surely Strongly Stable Case) \label{Proposition 1: Almost Sure Stable} Let A\ref{Assumption noises wt}-A\ref{A6} hold. {If}
	policy-dependent state transition matrix $\mathbf{A}_{t}$ is $\left(\kappa, \gamma-\kappa^2 \xi_t\right)$-strongly stable for real numbers $\kappa \geq 1, \gamma- \kappa^2 \xi_t<1, \forall 0\leq t<T$, almost surely. Let $\zeta=\max \left\{1-\gamma+\kappa^2 \xi_t, \forall 0 \leq t<T\right\}$, then $\mathbf{M}^{PS}$ exists and is unique if $ \sum_{t=0}^{T-1}\varepsilon_{t}<\phi{\left(1-\frac{H}{T}\right)}\overline{\mu},$
where $\overline{\mu}=\min\left\{ \frac{\mu\sigma^2}{2},\frac{\mu\sigma^{2}\gamma^{2}}{64\kappa^{10}}\right\} $
and $\phi$ is some positive constant. 
\end{prop}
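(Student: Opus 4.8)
The plan is to deduce the clean scalar condition $\sum_{t=0}^{T-1}\varepsilon_t < \phi\left(1-\frac{H}{T}\right)\overline{\mu}$ as a \emph{sufficient} condition for inequality (\ref{eq:suff condition M^PS}) of Lemma \ref{Lemma: Existence Uniqueness M^PS}, and then invoke that lemma verbatim to obtain existence and uniqueness of $\mathbf{M}^{PS}$. The whole argument is thus a specialization of (\ref{eq:suff condition M^PS}) under strong stability, and the real work reduces to two pieces: (i) bounding the propagation weights $\nu_i$ uniformly in $i$ and $T$, and (ii) comparing the resulting single sum $\sum_t \varepsilon_t$ against $\widetilde{\mu}$ to extract the factor $\left(1-\frac{H}{T}\right)$.

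First I would use the almost sure strong stability to make the state-growth quantities of Lemma \ref{Lemma: x_t representation M} geometrically bounded. The assumption that the realized closed-loop map is $(\kappa,\gamma-\kappa^2\xi_t)$-strongly stable forces its stability margin to lie in $(0,1)$, i.e. $0<\gamma-\kappa^2\xi_t<1$, so the contraction factor $1-(\gamma-\kappa^2\xi_t)=1-\gamma+\kappa^2\xi_t$ appearing in $\alpha_t,\beta_t$ is strictly below $1$ for every $t$; with $\zeta=\max_t\left(1-\gamma+\kappa^2\xi_t\right)<1$ the products and sums become summable. Concretely, every factor of $\alpha_t=\prod_{i=0}^{t-1}\left(1-\gamma+\kappa^2\xi_i\right)$ is at most $\zeta$, whence $\alpha_t\le\zeta^t\le 1$; and each inner product of $\beta_t$ is bounded by a power of $\zeta$, so $\beta_t\le\sum_{k=0}^{t-1}\zeta^k\le(1-\zeta)^{-1}$, uniformly in $t$ and $T$. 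Substituting these two bounds into $\nu_t=\left(c_1+c_2\beta_t\right)\left(x_0\alpha_t+c_3\beta_t\right)$ from Lemma \ref{smooth} yields a uniform cap $\nu_t\le\nu_{\max}:=\left(c_1+c_2(1-\zeta)^{-1}\right)\left(x_0+c_3(1-\zeta)^{-1}\right)$, a quantity determined only by the system constants $c_1,c_2,c_3,x_0,\zeta$ and not by $T$.

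Next I would collapse the double sum. Using $\nu_i\le\nu_{\max}$ together with the fact that the inner sum $\sum_{i=t+1}^{T}\nu_i$ has $T-t\le T$ terms, the left side of (\ref{eq:suff condition M^PS}) satisfies $\sum_{t=0}^{T-1}\left(\varepsilon_t\sum_{i=t+1}^{T}\nu_i\right)\le T\nu_{\max}\sum_{t=0}^{T-1}\varepsilon_t$. On the right, both entries of $\widetilde{\mu}=\min\{(T-H+1)\mu\sigma^2/2,\ (T-H+1)\mu\sigma^2\gamma^2/(64\kappa^{10})\}$ share the common factor $T-H+1$, so $\widetilde{\mu}=(T-H+1)\overline{\mu}$. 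Hence (\ref{eq:suff condition M^PS}) is implied by $T\nu_{\max}\sum_t\varepsilon_t<(T-H+1)\overline{\mu}$, i.e. by $\sum_t\varepsilon_t<\frac{T-H+1}{T}\cdot\frac{\overline{\mu}}{\nu_{\max}}$. Since $\frac{T-H+1}{T}=1-\frac{H-1}{T}\ge 1-\frac{H}{T}$, setting $\phi=\nu_{\max}^{-1}$ shows that $\sum_{t=0}^{T-1}\varepsilon_t<\phi\left(1-\frac{H}{T}\right)\overline{\mu}$ is enough to guarantee (\ref{eq:suff condition M^PS}), and Lemma \ref{Lemma: Existence Uniqueness M^PS} then delivers the unique $\mathbf{M}^{PS}$.

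Most of this is routine once $\zeta<1$ is secured; the one point needing care is the passage from the almost sure $(\kappa,\gamma-\kappa^2\xi_t)$-strong stability of the realized closed-loop matrices to the deterministic bound $\zeta<1$ on the contraction factors driving $\alpha_t,\beta_t$, which rests on the $\xi_t$ being valid almost sure norm bounds on $\mathbf{\Delta}_t$ from Assumption A\ref{A2}. A secondary bookkeeping concern is that $\nu_{\max}$, and therefore $\phi$, inherits the $H$-dependence of $c_1,c_2,c_3$; this is harmless provided $H$ is regarded as a fixed structural parameter, but it should be stated explicitly so the claimed $\phi$ is genuinely free of $T$.
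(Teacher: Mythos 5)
Your proposal is correct and follows essentially the same route as the paper's own proof: bound $\alpha_t\le\zeta^t$ and $\beta_t\le(1-\zeta)^{-1}$ via $\zeta<1$, cap $\nu_t$ by the $T$-independent constant $\left(c_1+c_2(1-\zeta)^{-1}\right)\left(x_0+c_3(1-\zeta)^{-1}\right)$, collapse the double sum using $\sum_{i=t+1}^T\nu_i\le (T-t)\nu_{\max}\le T\nu_{\max}$, factor $\widetilde{\mu}=(T-H+1)\overline{\mu}$, and absorb $\tfrac{T-H+1}{T}\ge 1-\tfrac{H}{T}$ into the constant $\phi=\nu_{\max}^{-1}$, which is exactly the paper's choice of $\phi$. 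No gaps.
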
}

Proposition \ref{Proposition 1: Almost Sure Stable} points out that
when the policy-dependent dynamics are {almost surely strongly stable}, we only
need to make sure that the sum of all the sensitivities $\left\{ \varepsilon_{t},0\leq t<T\right\} $
is below a certain threshold. 
\begin{prop}
	\label{Proposition 2: Almost Sure Unstable} {(Almost Surely Unstable Case)} Let A\ref{Assumption noises wt}-A\ref{A6} hold. {If} the policy-dependent
	state transition matrix $\mathbf{A}_{t}$ is almost surely unstable,
	i.e., there exists a positive constant $\widetilde{\zeta}>1$ such
	that $\widetilde{\zeta}\leq\left\Vert \mathbf{A}_{t}\right\Vert \leq1-\gamma+\kappa^{2}\xi_{t},\forall0\leq t<T.$
	In this case, {to guarantee the sufficient condition \eqref{eq:suff condition M^PS} can be satifised, we must have } $ \varepsilon_{t}<\frac{\overline{\phi}{\left(T-H+1\right)\overline{\mu}}}{\widetilde{\zeta}^{T-t}-1},\forall0\leq t<T,$
	where $\overline{\phi}$ is some positive constant.
\end{prop}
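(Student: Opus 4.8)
The plan is to read the claimed bound as a \emph{necessary} consequence of the sufficient condition \eqref{eq:suff condition M^PS}, so the whole argument runs in the direction ``\eqref{eq:suff condition M^PS} holds $\Rightarrow$ each $\varepsilon_t$ is small''. First I would note that the left-hand side of \eqref{eq:suff condition M^PS} is a sum of nonnegative terms (all of $\varepsilon_t$, $\nu_i$, $\alpha_i$, $\beta_i$ and the constants $c_1,c_2,c_3$ are nonnegative), so for any fixed $t$ the single summand is dominated by the whole sum, giving the term-wise inequality $\varepsilon_t\sum_{i=t+1}^T\nu_i<\widetilde\mu$. Since $H<T$, I can also pull the factor $(T-H+1)$ out of the minimum defining $\widetilde\mu$ and write $\widetilde\mu=(T-H+1)\overline\mu$ with $\overline\mu=\min\{\mu\sigma^2/2,\ \mu\sigma^2\gamma^2/(64\kappa^{10})\}$. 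Rearranging then leaves me to lower bound the weighted tail $\sum_{i=t+1}^T\nu_i$.

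The core of the proof is to turn almost-sure instability into a geometric lower bound on the weights $\nu_i$. The hypothesis $\widetilde\zeta\le\|\mathbf A_t\|\le 1-\gamma+\kappa^2\xi_t$ yields in particular $1-\gamma+\kappa^2\xi_t\ge\widetilde\zeta>1$ for every $t$, so each factor of $\alpha_i=\prod_{j=0}^{i-1}(1-\gamma+\kappa^2\xi_j)$ is at least $\widetilde\zeta$ and hence $\alpha_i\ge\widetilde\zeta^{\,i}$. Dropping the nonnegative $\beta_i$-dependent pieces of $\nu_i=(c_1+c_2\beta_i)(x_0\alpha_i+c_3\beta_i)$ gives the clean bound $\nu_i\ge c_1 x_0\alpha_i\ge c_1 x_0\widetilde\zeta^{\,i}$.

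Summing this geometric bound is exactly what produces the denominator $\widetilde\zeta^{\,T-t}-1$: using the finite geometric series,
\[
\sum_{i=t+1}^T\nu_i\ \ge\ c_1 x_0\sum_{i=t+1}^T\widetilde\zeta^{\,i}\ =\ c_1 x_0\,\widetilde\zeta^{\,t+1}\frac{\widetilde\zeta^{\,T-t}-1}{\widetilde\zeta-1}\ \ge\ \frac{c_1 x_0}{\widetilde\zeta-1}\bigl(\widetilde\zeta^{\,T-t}-1\bigr),
\]
where the last step uses $\widetilde\zeta^{\,t+1}\ge1$. Combining with $\varepsilon_t<\widetilde\mu/\sum_{i=t+1}^T\nu_i=(T-H+1)\overline\mu/\sum_{i=t+1}^T\nu_i$ yields $\varepsilon_t<\overline\phi(T-H+1)\overline\mu/(\widetilde\zeta^{\,T-t}-1)$ with the $t$-independent constant $\overline\phi=(\widetilde\zeta-1)/(c_1 x_0)$, which is the claim.

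The computation here is short; the hard or subtle part is really the framing rather than the algebra. One must be careful that the proposition is a necessary condition distilled from a sufficient condition, so the decisive step is the nonnegativity-based term-wise reduction, after which instability ($\widetilde\zeta>1$) forces $\alpha_i$ and hence $\nu_i$ to grow geometrically and thereby compresses each admissible $\varepsilon_t$ by a factor $1/(\widetilde\zeta^{\,T-t}-1)$. This factor shrinks fastest for small $t$, which is precisely the asserted ``temporally backward decaying'' requirement: the earliest perturbation $\mathbf\Delta_0$, propagating over the whole horizon, is forced to have the smallest sensitivity.
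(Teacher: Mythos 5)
Your proof is correct and follows essentially the same route as the paper: both reduce the sufficient condition \eqref{eq:suff condition M^PS} term-wise, use $1-\gamma+\kappa^{2}\xi_{t}\geq\widetilde{\zeta}>1$ to lower-bound $\alpha_{i}\geq\widetilde{\zeta}^{\,i}$ and hence $\nu_{i}$ geometrically, and sum the geometric series to produce the $\widetilde{\zeta}^{\,T-t}-1$ denominator. The only difference is that you discard the $\beta_{i}$ contributions and keep only $c_{1}x_{0}\alpha_{i}$, giving the slightly different (still valid, provided $x_{0}>0$) constant $\overline{\phi}=(\widetilde{\zeta}-1)/(c_{1}x_{0})$, whereas the paper retains those terms and obtains $\overline{\phi}=(\widetilde{\zeta}-1)/\bigl(c_{1}x_{0}+(c_{1}c_{3}+c_{2}c_{3}+c_{3}x_{0})\widetilde{\zeta}^{-1}\bigr)$, which remains positive even when $x_{0}=0$.
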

For unstable policy-dependent dynamics, condition \eqref{eq:suff condition M^PS} will impose a necessary requirement on the temporally backwards decaying of the sensitivities. Particularly, more restrictive requirements are placed on the the sensitivies in the early time steps, i.e., smallt, which should decay exponentially fast w.r.t. the control time horzion $T$.

For more general applications, where $\mathbf{A}_{t}$ can be
either stable or unstable for different time steps, the weighted sum requirement of the sensitivities {$\{\varepsilon_t, 0\leq t<T\}$} in \eqref{eq:suff condition M^PS} is sufficient to guarantee the existence and uniqueness of the performative stable solution $\mathbf{M}^{PS}$.

\textbf{Convergence of RSGD Scheme.} Our next theorem establishes the convergence rate of the proposed RSGD algorithm. 
\begin{center}
	\fbox{\begin{minipage}{.975\linewidth}
				\begin{thm}	\label{Thm: Convergence RSGD}
					{Choose two positive constants $\phi_{1}>0$ and $\phi_{2}\geq1$
						such that the following two conditions are satisfied simualtneously
						\begin{align}
							&\frac{\phi_{1}}{\phi_{2}}  \leq\mathrm{min}\left\{ \frac{\widetilde{\mu}-\sum_{t=0}^{T-1}\left(\varepsilon_{t}\sum_{i=t+1}^{T}\nu_{i}\right)}{2\left(\sum_{t=1}^{T}\lambda_{t}+\sum_{t=0}^{T-1}\left(\varepsilon_{t}\sum_{i=t+1}^{T}\nu_{i}\right)\right)^{2}},\frac{1}{\widetilde{\mu}-\sum_{t=0}^{T-1}\left(\varepsilon_{t}\sum_{i=t+1}^{T}\nu_{i}\right)}\right\},\label{17}\\
							&\frac{\phi_{1}}{1+\frac{1}{\phi_{2}}}  \geq\frac{2}{\widetilde{\mu}-\sum_{t=0}^{T-1}\left(\varepsilon_{t}\sum_{i=t+1}^{T}\nu_{i}\right)}\label{18}.
						\end{align}
					}{Consider a sequence of non-negative step sizes $\left\{ \eta_{n}=\frac{\phi_{1}}{n+\phi_{2}},n\geq0\right\} $
					. Then, the iterates generated by RSGD admit the following bound for
					any $N\geq1$:
					\begin{align}
						\mathbb{E}\left[\left\Vert \mathbf{M}_{N}-\mathbf{M}^{PS}\right\Vert _{F}^{2}\right]\leq\mathrm{e}^{-\sum_{n=1}^{N}\frac{\phi_{1}}{n}\left(\widetilde{\mu}-\sum_{t=0}^{T-1}\left(\varepsilon_{t}\sum_{i=t+1}^{T}\nu_{i}\right)\right)}\mathbb{E}\left[\left\Vert \mathbf{M}_{0}-\mathbf{M}^{PS}\right\Vert _{F}^{2}\right] & +\frac{\phi_{3}}{N},\label{eq: RSGD convergence}
					\end{align}
					where $\vartheta_{t}=\kappa^{3}G\left(\left(HW+\kappa^{2}\right)\kappa\left\Vert \mathbf{B}\right\Vert \beta_{t}+1\right)\left(x_{0}\alpha_{t}+c_{3}\beta_{t}\right)+GHWM\left(\kappa^{3}\beta_{t}+1\right),\forall1\leq t\leq T,$
					and $\phi_{3}=\frac{4\phi_{1}T\sum_{t=1}^{T}\vartheta_{t}^{2}}{\widetilde{\mu}-\sum_{t=0}^{T-1}\left(\varepsilon_{t}\sum_{i=t+1}^{T}\nu_{i}\right)}$
					is a positive constant.}
				\end{thm}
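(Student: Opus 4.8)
The plan is to analyze Algorithm~\ref{algo:algo_stochastic} as a projected stochastic gradient method for the fixed-point map $\Phi$, exploiting that the \emph{performative gradient operator} $\mathbf{M}\mapsto\nabla C_T(\mathbf{M};\mathbf{M})$ is both strongly monotone and Lipschitz. Write $\Gamma\coloneqq\widetilde{\mu}-\sum_{t=0}^{T-1}\big(\varepsilon_t\sum_{i=t+1}^{T}\nu_i\big)$, which is positive precisely under the standing condition \eqref{eq:suff condition M^PS} that, by Lemma~\ref{Lemma: Existence Uniqueness M^PS}, also guarantees that $\mathbf{M}^{PS}$ exists and is unique. The objective is to establish a one-step recursion of the form $\mathbb{E}\|\mathbf{M}_{n+1}-\mathbf{M}^{PS}\|_F^2\le(1-\Gamma\eta_n)\,\mathbb{E}\|\mathbf{M}_n-\mathbf{M}^{PS}\|_F^2+C\eta_n^2$ and then unroll it for the harmonic step size $\eta_n=\phi_1/(n+\phi_2)$.

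First I would invoke nonexpansiveness of the Frobenius projection onto the convex set $\mathbb{M}$ (recall $\mathbf{M}^{PS}\in\mathbb{M}$) to bound $\|\mathbf{M}_{n+1}-\mathbf{M}^{PS}\|_F^2\le\|\mathbf{M}_n-\eta_n\nabla J_T-\mathbf{M}^{PS}\|_F^2$ and expand the square. Conditioning on $\mathbf{M}_n$, the deployed policy generates $\mathbf{\Delta}_t\sim\mathcal{D}_t(\mathbf{M}_n)$, so the stochastic gradient is unbiased for the decoupled gradient, $\mathbb{E}[\nabla J_T\mid\mathbf{M}_n]=\nabla C_T(\mathbf{M}_n;\mathbf{M}_n)$, where differentiation is through the state/action maps $\mathbf{x}_t^{(\mathbf{M})},\mathbf{u}_t^{(\mathbf{M})}$ at fixed noise realizations. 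For the cross term I would split $\langle\nabla C_T(\mathbf{M}_n;\mathbf{M}_n),\mathbf{M}_n-\mathbf{M}^{PS}\rangle$ into $\langle\nabla C_T(\mathbf{M}_n;\mathbf{M}^{PS}),\mathbf{M}_n-\mathbf{M}^{PS}\rangle$ plus $\langle\nabla C_T(\mathbf{M}_n;\mathbf{M}_n)-\nabla C_T(\mathbf{M}_n;\mathbf{M}^{PS}),\mathbf{M}_n-\mathbf{M}^{PS}\rangle$. The first inner product is at least $\widetilde{\mu}\|\mathbf{M}_n-\mathbf{M}^{PS}\|_F^2$ by combining the $\widetilde{\mu}$-strong convexity of $C_T(\cdot;\mathbf{M}^{PS})$ from Lemma~\ref{s_cvx_t} with the first-order optimality $\langle\nabla C_T(\mathbf{M}^{PS};\mathbf{M}^{PS}),\mathbf{M}_n-\mathbf{M}^{PS}\rangle\ge0$ of the fixed point; the second is bounded in magnitude by $\big(\sum_{t=0}^{T-1}\varepsilon_t\sum_{i=t+1}^{T}\nu_i\big)\|\mathbf{M}_n-\mathbf{M}^{PS}\|_F^2$ through the sensitivity estimate of Lemma~\ref{smooth} (taking $\mathbf{M}_1=\mathbf{M}_2=\mathbf{M}_n$). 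Together these yield $\langle\nabla C_T(\mathbf{M}_n;\mathbf{M}_n),\mathbf{M}_n-\mathbf{M}^{PS}\rangle\ge\Gamma\|\mathbf{M}_n-\mathbf{M}^{PS}\|_F^2$, i.e.\ strong monotonicity of the performative operator with modulus $\Gamma$.

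Next I would control the stochastic gradient. Differentiating the closed-form state representation of Lemma~\ref{Lemma: x_t representation M} by the chain rule and invoking the gradient boundedness of Assumption A\ref{A6}, each per-stage term satisfies $\|\nabla_{\mathbf{M}_n}c_t\|_F\le\vartheta_t$ almost surely, where the growth factors $\alpha_t,\beta_t$ enter through the state and action Jacobians; Cauchy--Schwarz then gives $\mathbb{E}[\|\nabla J_T\|_F^2]\le T\sum_{t=1}^{T}\vartheta_t^2$. Substituting the monotonicity and second-moment bounds into the expanded square produces $\mathbb{E}\|\mathbf{M}_{n+1}-\mathbf{M}^{PS}\|_F^2\le(1-2\Gamma\eta_n+2(\sum_{t=1}^{T}\lambda_t+\sum_{t=0}^{T-1}\varepsilon_t\sum_{i=t+1}^{T}\nu_i)^2\eta_n^2)\,\mathbb{E}\|\mathbf{M}_n-\mathbf{M}^{PS}\|_F^2+T(\sum_{t}\vartheta_t^2)\eta_n^2$, where the quadratic-in-$\eta_n$ coefficient arises from bounding $\|\nabla C_T(\mathbf{M}_n;\mathbf{M}_n)\|_F$ by the Lipschitz constant $\sum_t\lambda_t+\sum_t\varepsilon_t\sum_i\nu_i$ of the performative gradient operator (again Lemma~\ref{smooth}). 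Condition \eqref{17} guarantees $\eta_n\le\eta_0=\phi_1/\phi_2$ is small enough that this quadratic term is absorbed into the linear one, collapsing the contraction factor to $1-\Gamma\eta_n$, while condition \eqref{18} ensures $\phi_1\Gamma$ is large enough to deliver the $\mathcal{O}(1/N)$ residual.

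Finally I would unroll the recursion. Using $1-x\le e^{-x}$ on the homogeneous part telescopes to the prefactor $\exp(-\Gamma\sum_{n=1}^{N}\phi_1/n)$ multiplying the initial error, and the inhomogeneous part $\sum_k\eta_k^2\prod_{j>k}(1-\Gamma\eta_j)$ is summed to $\phi_3/N$ with $\phi_3=\tfrac{4\phi_1 T\sum_t\vartheta_t^2}{\Gamma}$ by the standard estimate for $\eta_n=\phi_1/(n+\phi_2)$ under \eqref{17}--\eqref{18}. The main obstacle is twofold. First, establishing the sharp per-stage constant $\vartheta_t$ requires differentiating the product-form representation of Lemma~\ref{Lemma: x_t representation M} and tracking $\alpha_t,\beta_t$ consistently through both the state and the disturbance-action Jacobians. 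Second, the recursion bookkeeping must be done so that the step-size constraints \eqref{17}--\eqref{18} produce exactly the claimed exponential prefactor and the clean $\phi_3/N$ term, rather than a slower $\mathcal{O}(\log N/N)$ rate that results whenever $\phi_1\Gamma$ fails to be sufficiently large.
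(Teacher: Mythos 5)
Your proposal is correct and follows essentially the same route as the paper: projection nonexpansiveness, splitting the cross term into a strong-convexity piece (Lemma \ref{s_cvx_t}) and a distribution-shift piece bounded via Lemma \ref{smooth}, a variance/second-moment bound on the stochastic gradient yielding $T\sum_t\vartheta_t^2$, absorption of the quadratic-in-$\eta_n$ term under condition \eqref{17}, and unrolling with $1-x\le e^{-x}$ plus the standard sum-of-products estimate under \eqref{18}. The only cosmetic difference is that the paper first proves a general-step-size lemma (Lemma \ref{Thm: Convergence RSGD appendix_lem}) and then specializes to $\eta_n=\phi_1/(n+\phi_2)$, whereas you work with the harmonic step size directly.
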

		\end{minipage}} 
\end{center}

\medskip{}


{Note that, under the sufficient condition \eqref{eq:suff condition M^PS} in Lemma \ref{Lemma: Existence Uniqueness M^PS}, there always exists a pair of $(\phi_1, \phi_2)$ that satisfies \eqref{17} and \eqref{18} simultaneously by letting $\phi_2$ be sufficiently large.} The first term on the R. H. S. of (\ref{eq: RSGD convergence}) decays {at the rate of  $\ca{O}(\mathrm{e}^{-\sum_{n=1}^{N}\frac{\phi_{1}}{n}\left(\widetilde{\mu}-\sum_{t=0}^{T-1}\left(\varepsilon_{t}\sum_{i=t+1}^{T}\nu_{i}\right)\right)})$}
and is scaled by the initial error $\mathbb{E}\left[\left\Vert \mathbf{M}_{0}-\mathbf{M}^{PS}\right\Vert _{F}^{2}\right].$
The second term is a fluctuation term that only depends on the variance
of the stochastic gradient, which
decays {at the rate  $\mathcal{O}\left(1/{N}\right)$. For more general types of step sizes and the associated nonasymptotic convergence rate analysis, please refer to Lemma \ref{Thm: Convergence RSGD appendix_lem} in the appendix \ref{G.1}.}


\begin{figure}[t] 
	\centering
	\includegraphics[width=.31\linewidth]{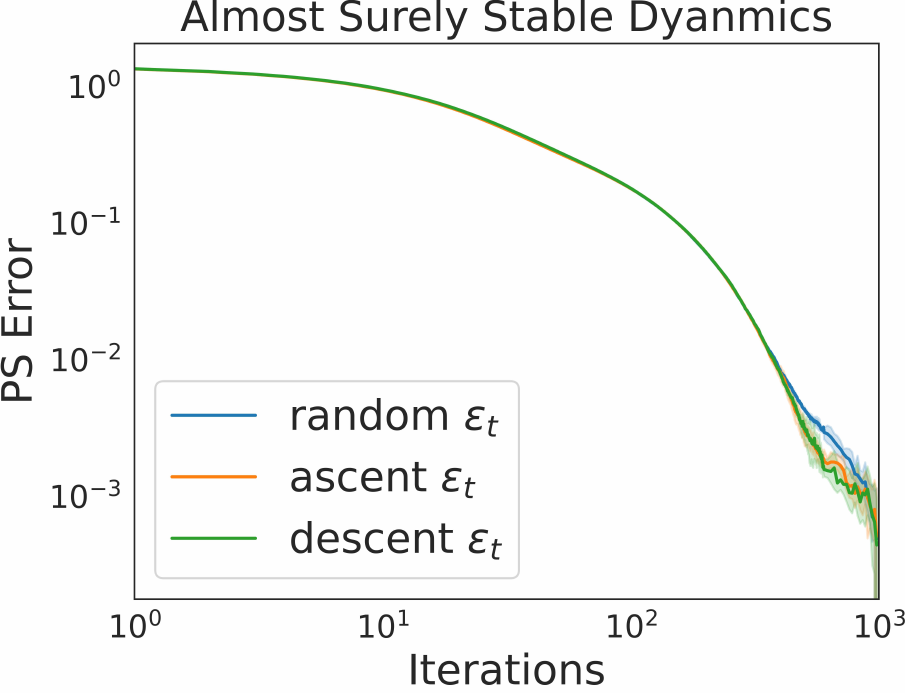}
	~~
	\includegraphics[width=.31\linewidth]{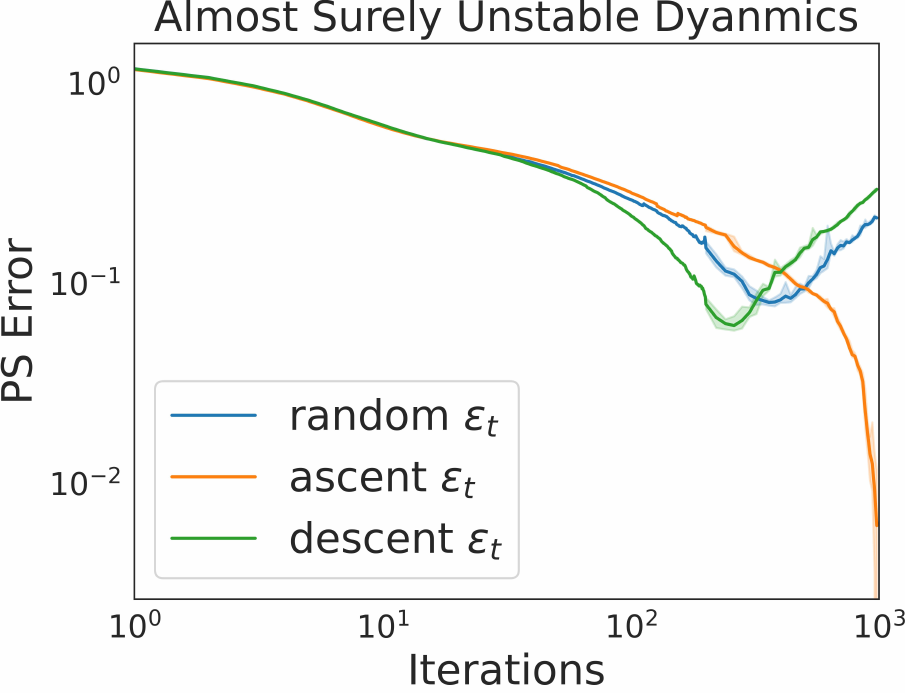}
	~~
	\includegraphics[width=.31\linewidth]{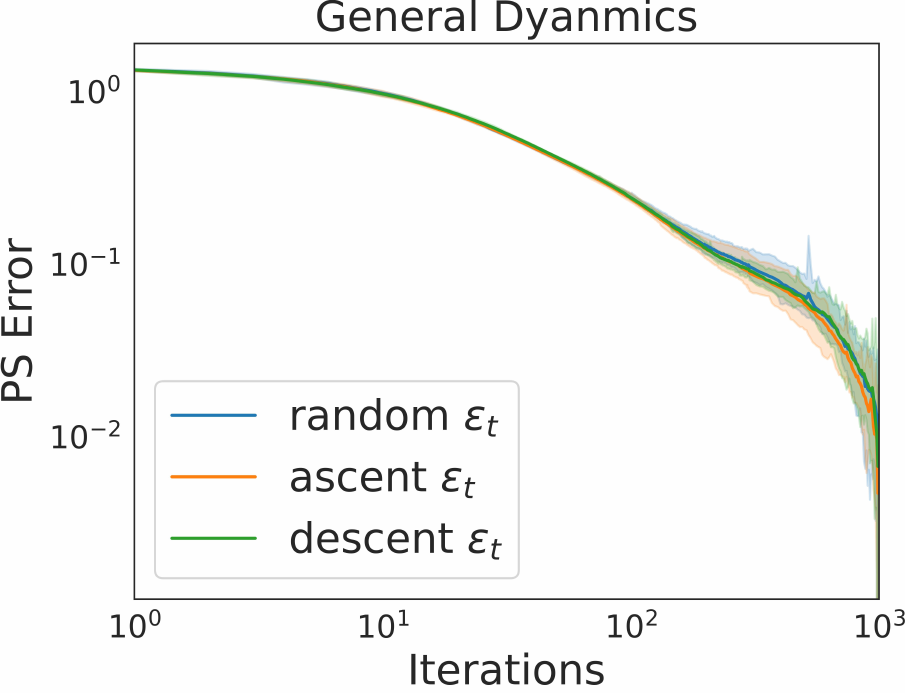}
	~~
	\caption{PS Error $\left\Vert \mathbf{M}_{N}-\mathbf{M}^{PS}\right\Vert _{F}^{2}$}
	\label{fig:gap}
\end{figure}

\begin{figure}[t] 
	\centering
	\includegraphics[width=.3\linewidth]{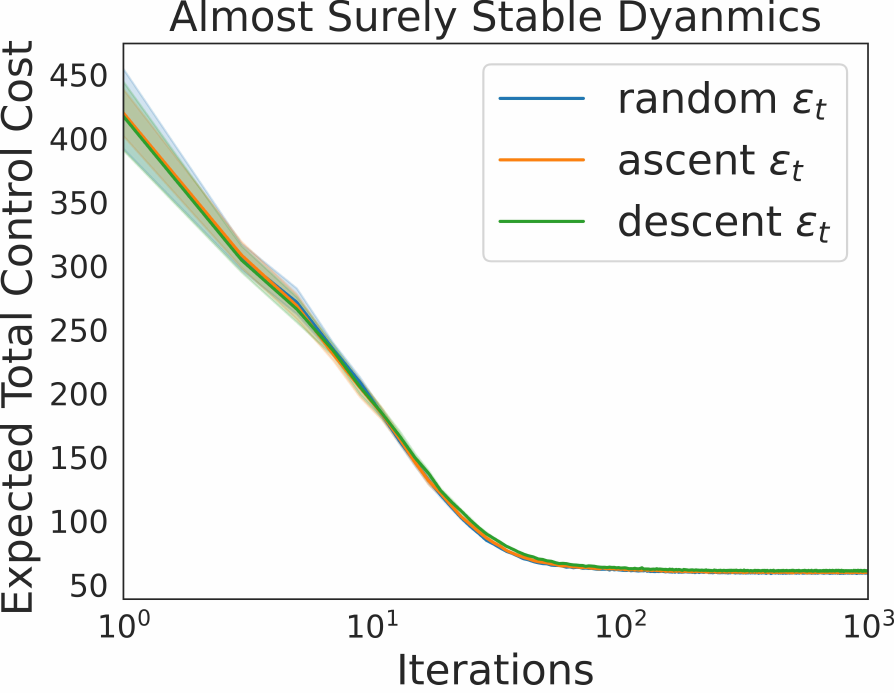}
	~~
	\includegraphics[width=.3\linewidth]{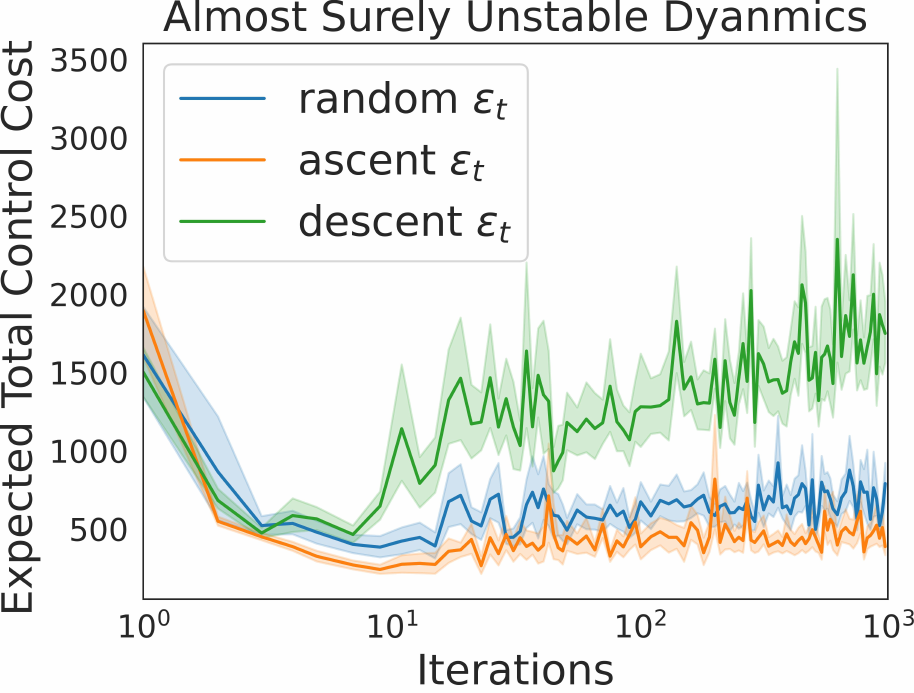}
	~~
	\includegraphics[width=.3\linewidth]{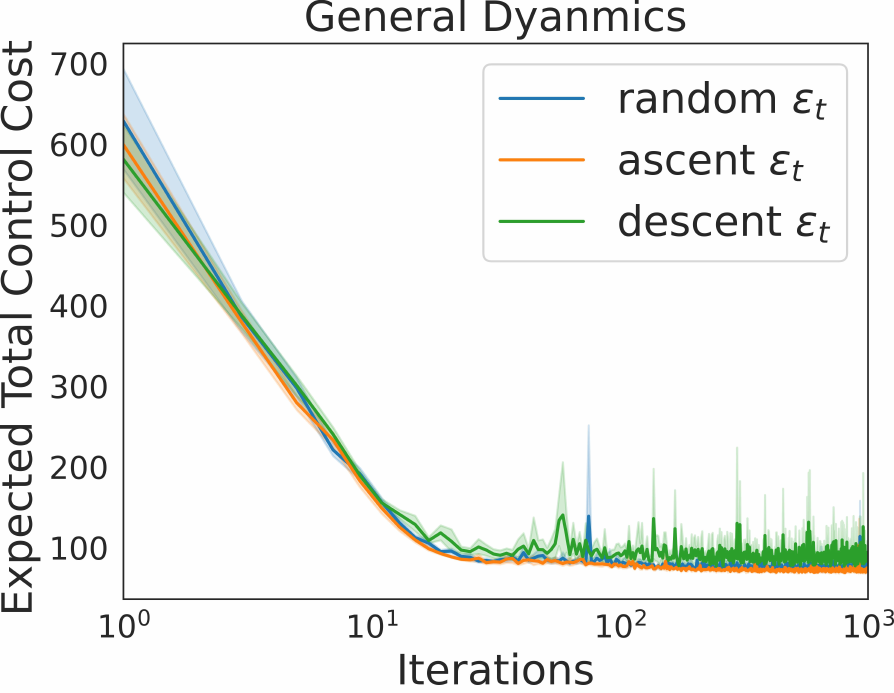}
	~~
	\caption{ {Expected Total Control Cost $C_{T}\left(\mathbf{M}_{N};\mathbf{M}_{N}\right)$}}
	\label{fig:cost}
\end{figure}

\section{Numerical Experiments}
We consider an application of stock investment risk minimization problem
to verify our algorithm and theoretical results. Consider an investor
trading a total number of 10 stocks over a period of $T=60$ trading
days. The detailed system setups are described in Example 1 in Appendix A of the supplementary. We compare the {performative error (PS error)} $\left\Vert \mathbf{M}_{N}-\mathbf{M}^{PS}\right\Vert _{F}^{2}$ and the  {expected} total cost $C_{T}\left(\mathbf{M}_{N};\mathbf{M}_{N}\right)$
against the iteration number $N$, respectively. {We consider a fixed distributional sensitivity value set $\boldsymbol{\varepsilon}=\left\{\varepsilon_{i},i=0,1,\ldots,T-1\right\}$.} We assign three different patterns of the sensitivity sequence as
 {$\varepsilon_{\mathrm{d}}=\mathrm{descend}\left(\boldsymbol{\varepsilon}\right)$,}
 {$\varepsilon_{\mathrm{a}}=\mathrm{ascend}\left(\boldsymbol{\varepsilon}\right)$}
and  {$\varepsilon_{\mathrm{r}}=\mathrm{random}\left(\boldsymbol{\varepsilon}\right)$}
in descending, ascending, and random order, respectively, over the time steps. We first observe from Figure \ref{fig:gap} (left) and Figure \ref{fig:cost} (left) that when the policy-dependent
system dynamics  {$\mathbf{A}_{t},\forall 0\leq t<T,$} are {almost surely strongly stable}, the gap $\left\Vert \mathbf{M}_{N}-\mathbf{M}^{PS}\right\Vert _{F}^{2}$
of three different patterns $\varepsilon_{\mathrm{d}},$ $\varepsilon_{\mathrm{a}}$
and $\varepsilon_{\mathrm{r}}$ all decay at $\mathcal{O}\left(\frac{1}{N}\right)$
as $N\rightarrow\infty$, and the  {expected} total control cost also
converges. These coincide exactly with Proposition \ref{Proposition 1: Almost Sure Stable}
and Theorem \ref{Thm: Convergence RSGD}, where the existence of $\mathbf{M}^{PS}$
only requires that the sum of the distributional sensitivities $\sum_{t=0}^{T-1}\varepsilon_{t}$
is small enough, and the temporal order of each $\varepsilon_{t}$ is negligible. We next observe from Figure \ref{fig:gap} (middle) and Figure \ref{fig:cost} (middle),  that when the
 {$\mathbf{A}_{t},\forall 0\leq t<T,$} are almost  {surely} unstable,
the iterates $\mathbf{M}_{n}$ of both $\varepsilon_{\mathrm{d}}$ and
$\varepsilon_{\mathrm{r}}$ diverge. The  {expected} total control costs
associated with $\varepsilon_{\mathrm{d}}$ and $\varepsilon_{\mathrm{r}}$
are significantly larger than that of $\varepsilon_{\mathrm{a}}.$
This is because large initial distributional sensitivities in $\varepsilon_{\mathrm{d}}$ and
$\varepsilon_{\mathrm{r}}$ rule out the existence of $\mathbf{M}^{PS},$
which matches Proposition \ref{Proposition 2: Almost Sure Unstable}.
For the general case of $\mathbf{A}_{t}$ in Figure \ref{fig:gap} (right) and Figure \ref{fig:cost} (right),  all three cases converge due to the relatively mild sufficient condition (\ref{eq:suff condition M^PS}).

\section*{Conclusion}
In this work, we have introduced the framework of performative control
and studied the conditions under which a PSC policy exists. We
have analyzed the impact of system stability on the existence of the
PSC policy, and  {proposed a condition} on the sum of the distributional sensitivities and the temporally backwards decaying
of sensitivities for {almost surely strongly stable} and almost surely unstable systems, respectively. We have also proposed an RSGD algorithm that
converges to the PSC policy in a mean-square sense. The extension of our current results to general control policies and general control costs [cf. A\ref{A4}], will be explored in future work.

\newpage 
\bibliography{main}

\begin{thebibliography}{10}

\bibitem{agarwal2019online}
Naman Agarwal, Brian Bullins, Elad Hazan, Sham Kakade, and Karan Singh.
\newblock Online control with adversarial disturbances.
\newblock In {\em International Conference on Machine Learning}, pages
  111--119. PMLR, 2019.

\bibitem{agarwal2019logarithmic}
Naman Agarwal, Elad Hazan, and Karan Singh.
\newblock Logarithmic regret for online control.
\newblock {\em Advances in Neural Information Processing Systems}, 32, 2019.

\bibitem{brown2022performative}
Gavin Brown, Shlomi Hod, and Iden Kalemaj.
\newblock Performative prediction in a stateful world.
\newblock In {\em International conference on artificial intelligence and
  statistics}, pages 6045--6061. PMLR, 2022.

\bibitem{carlos2018analysis}
Hugo Carlos, Jean-Bernard Hayet, and Rafael Murrieta-Cid.
\newblock An analysis of policies from stochastic linear quadratic gaussian in
  robotics problems with state-and control-dependent noise.
\newblock {\em Journal of Intelligent \& Robotic Systems}, 92(1):85--106, 2018.

\bibitem{drusvyatskiy2023stochastic}
Dmitriy Drusvyatskiy and Lin Xiao.
\newblock Stochastic optimization with decision-dependent distributions.
\newblock {\em Mathematics of Operations Research}, 48(2):954--998, 2023.

\bibitem{duncan2017stochastic}
TE~Duncan and B~Pasik-Duncan.
\newblock Stochastic linear-quadratic control with state dependent fractional
  brownian noise and stochastic coefficients.
\newblock {\em IFAC-PapersOnLine}, 50(2):199--202, 2017.

\bibitem{fazel2018global}
Maryam Fazel, Rong Ge, Sham Kakade, and Mehran Mesbahi.
\newblock Global convergence of policy gradient methods for the linear
  quadratic regulator.
\newblock In {\em International conference on machine learning}, pages
  1467--1476. PMLR, 2018.

\bibitem{ghai2024online}
Udaya Ghai, Arushi Gupta, Wenhan Xia, Karan Singh, and Elad Hazan.
\newblock Online nonstochastic model-free reinforcement learning.
\newblock {\em Advances in Neural Information Processing Systems}, 36, 2024.

\bibitem{gravell2020learning}
Benjamin Gravell, Peyman~Mohajerin Esfahani, and Tyler Summers.
\newblock Learning optimal controllers for linear systems with multiplicative
  noise via policy gradient.
\newblock {\em IEEE Transactions on Automatic Control}, 66(11):5283--5298,
  2020.

\bibitem{hambly2021policy}
Ben Hambly, Renyuan Xu, and Huining Yang.
\newblock Policy gradient methods for the noisy linear quadratic regulator over
  a finite horizon.
\newblock {\em SIAM Journal on Control and Optimization}, 59(5):3359--3391,
  2021.

\bibitem{hazan2020nonstochastic}
Elad Hazan, Sham Kakade, and Karan Singh.
\newblock The nonstochastic control problem.
\newblock In {\em Algorithmic Learning Theory}, pages 408--421. PMLR, 2020.

\bibitem{izzo2021learn}
Zachary Izzo, Lexing Ying, and James Zou.
\newblock How to learn when data reacts to your model: performative gradient
  descent.
\newblock In {\em International Conference on Machine Learning}, pages
  4641--4650. PMLR, 2021.

\bibitem{li2022state}
Qiang Li and Hoi-To Wai.
\newblock State dependent performative prediction with stochastic
  approximation.
\newblock In {\em International Conference on Artificial Intelligence and
  Statistics}, pages 3164--3186. PMLR, 2022.

\bibitem{li2209multi}
Qiang Li, Chung-Yiu Yau, and Hoi-To Wai.
\newblock Multi-agent performative prediction with greedy deployment and
  consensus seeking agents.
\newblock {\em URL https://arxiv. org/abs/2209.03811}, 2022.

\bibitem{liu2023data}
Rui Liu, Guangyao Shi, and Pratap Tokekar.
\newblock Data-driven distributionally robust optimal control with
  state-dependent noise.
\newblock In {\em 2023 IEEE/RSJ International Conference on Intelligent Robots
  and Systems (IROS)}, pages 9986--9991. IEEE, 2023.

\bibitem{mandal2023performative}
Debmalya Mandal, Stelios Triantafyllou, and Goran Radanovic.
\newblock Performative reinforcement learning.
\newblock In {\em International Conference on Machine Learning}, pages
  23642--23680. PMLR, 2023.

\bibitem{mania2019certainty}
Horia Mania, Stephen Tu, and Benjamin Recht.
\newblock Certainty equivalence is efficient for linear quadratic control.
\newblock {\em Advances in Neural Information Processing Systems}, 32, 2019.

\bibitem{mendler2020stochastic}
Celestine Mendler-D{\"u}nner, Juan Perdomo, Tijana Zrnic, and Moritz Hardt.
\newblock Stochastic optimization for performative prediction.
\newblock {\em Advances in Neural Information Processing Systems},
  33:4929--4939, 2020.

\bibitem{miller2021outside}
John~P Miller, Juan~C Perdomo, and Tijana Zrnic.
\newblock Outside the echo chamber: Optimizing the performative risk.
\newblock In {\em International Conference on Machine Learning}, pages
  7710--7720. PMLR, 2021.

\bibitem{nikolaidis2017game}
Stefanos Nikolaidis, Swaprava Nath, Ariel~D Procaccia, and Siddhartha
  Srinivasa.
\newblock Game-theoretic modeling of human adaptation in human-robot
  collaboration.
\newblock In {\em Proceedings of the 2017 ACM/IEEE international conference on
  human-robot interaction}, pages 323--331, 2017.

\bibitem{perdomo2020performative}
Juan Perdomo, Tijana Zrnic, Celestine Mendler-D{\"u}nner, and Moritz Hardt.
\newblock Performative prediction.
\newblock In {\em International Conference on Machine Learning}, pages
  7599--7609. PMLR, 2020.

\bibitem{pham2009numerical}
Huy{\^e}n Pham, Marco Corsi, and Wolfgang~J Runggaldier.
\newblock Numerical approximation by quantization of control problems in
  finance under partial observations.
\newblock In {\em Handbook of Numerical Analysis}, volume~15, pages 325--360.
  Elsevier, 2009.

\bibitem{quinonero2022dataset}
Joaquin Qui{\~n}onero-Candela, Masashi Sugiyama, Anton Schwaighofer, and Neil~D
  Lawrence.
\newblock {\em Dataset shift in machine learning}.
\newblock Mit Press, 2022.

\bibitem{rank2024performative}
Ben Rank, Stelios Triantafyllou, Debmalya Mandal, and Goran Radanovic.
\newblock Performative reinforcement learning in gradually shifting
  environments.
\newblock {\em arXiv preprint arXiv:2402.09838}, 2024.

\bibitem{ray2022decision}
Mitas Ray, Lillian~J Ratliff, Dmitriy Drusvyatskiy, and Maryam Fazel.
\newblock Decision-dependent risk minimization in geometrically decaying
  dynamic environments.
\newblock In {\em Proceedings of the AAAI Conference on Artificial
  Intelligence}, volume~36, pages 8081--8088, 2022.

\bibitem{recht2019tour}
Benjamin Recht.
\newblock A tour of reinforcement learning: The view from continuous control.
\newblock {\em Annual Review of Control, Robotics, and Autonomous Systems},
  2:253--279, 2019.

\bibitem{reno2006nonparametric}
Roberto Ren{\`o}.
\newblock Nonparametric estimation of stochastic volatility models.
\newblock {\em Economics Letters}, 90(3):390--395, 2006.

\bibitem{venkataraman2019recovering}
Harish~K Venkataraman and Peter~J Seiler.
\newblock Recovering robustness in model-free reinforcement learning.
\newblock In {\em 2019 American Control Conference (ACC)}, pages 4210--4216.
  IEEE, 2019.

\bibitem{wood2021online}
Killian Wood, Gianluca Bianchin, and Emiliano Dall’Anese.
\newblock Online projected gradient descent for stochastic optimization with
  decision-dependent distributions.
\newblock {\em IEEE Control Systems Letters}, 6:1646--1651, 2021.

\bibitem{yaghmaie2022linear}
Farnaz~Adib Yaghmaie, Fredrik Gustafsson, and Lennart Ljung.
\newblock Linear quadratic control using model-free reinforcement learning.
\newblock {\em IEEE Transactions on Automatic Control}, 68(2):737--752, 2022.

\bibitem{zhao2023non}
Peng Zhao, Yu-Hu Yan, Yu-Xiang Wang, and Zhi-Hua Zhou.
\newblock Non-stationary online learning with memory and non-stochastic
  control.
\newblock {\em The Journal of Machine Learning Research}, 24(1):9831--9900,
  2023.

\bibitem{zhou2023efficient}
Hongyu Zhou, Zirui Xu, and Vasileios Tzoumas.
\newblock Efficient online learning with memory via frank-wolfe optimization:
  Algorithms with bounded dynamic regret and applications to control.
\newblock In {\em 2023 62nd IEEE Conference on Decision and Control (CDC)},
  pages 8266--8273. IEEE, 2023.

\end{thebibliography}
\bibliographystyle{plain}

\medskip
\newpage 
\appendix
\section{Application Example and Experiment Detail.}
In this section, we elaborate a concrete example of stock market  risk minimization to justify the policy-dependent state transition model in \eqref{trans_matrix}. We also conduct numerical experiment based on this example to demonstrate the efficacy of our developed theory.
\begin{example}
	(Stock Market Risk Minimization). We describe a risk minimization
	problem for stock market investment to illustrate the application
	of (\ref{eq: PP optimal formulation}). Consider an investor trading
	a total number of $L$ stocks over a period of $T$ trading days. The observed market price $s_t^{(l)}$ of the $n$-th stock at the $t$-th day follows a stochastic volatility model of $\log s_{t+1}^{(l)}=\log s_t^{(l)}+\frac{r-\frac{1}{2}\left(v_t^{(l)}\right)^2}{T}+\frac{v_t^{(l)}}{\sqrt{T}}, s_1^{(l)}>0, \forall 1 \leq t<T, 1 \leq l \leq L$, where $r>0$ is the riskless interest rate per day, $v_t^{(l)}$ and $s_1^{(l)}$ are the unobservable independent random volatility process and the constant initial stock price associated with the $l$-th stock, respectively \cite{pham2009numerical,reno2006nonparametric}. Let $q_{t}^{\left(l\right)}$ and $q_{1}^{\left(l\right)}$ be the
	the total return at the $t$-th day before the market opens and the
	initial investment associated with the $l$-th stock, respectively.
	The investor maintains a portfolio for each $q_{t}^{\left(l\right)},$
	which is parameterized by a row vector $\mathbf{m}^{\left(l\right)}=\left[m^{l,1},\cdots,m^{l,M}\right]$
	with $m^{l,i}\in\left[0,1\right],\forall1\leq i\leq L,$ being the
	weight of allocation and $\sum_{i=1}^{L}m^{l,i}=1$. Specifically, at the $t$-th day when the market opens, the investor immediately allocates $q_t^{(l)}$ proportionally to buy the $N$ stocks according to the weight vector $\mathbf{m}^{(l)}$. The total systemic
	risk associated with $q_{t}^{l}$ during the period between the $t$-th
	day right before market opening and the $\left(t+1\right)$-th day
	right before market opening is $\sum_{i=1}^{L}\left(m^{l,i}w_{t}^{\left(i\right)}h\right)+1\cdot w_{t+1}^{\left(l\right)}\left(24-h\right),$
	where $w_{t}^{i}$ is the per hour risk associated with the $i$-th
	stock in the $t$-th day and $h$ is the total number of trading hours
	per day. is $\sum_{i=1}^L m^{l, i} e\left(\left(r-\frac{1}{2}\left(v_t^{(i)}\right)^2\right) \frac{1}{T}+v_t^{(i)} \frac{1}{\sqrt{T}}\right) q_t^l$. The investor then spends all this amount to buy back the $l$-th stock and then the $t$-th trading day ends. Each stock in the portfolio suffers from a proportionate random i.i.d. systemic risk $w_t^i$ per hour, i.e., holding the $i$-th stock with a ratio of $m^{l, i}$ in a portfolio for a total number of $h$ trading hours in the $t$-th day will incur a systemic risk of $m^{l, i} w_t^{(i)} h$. Denote the mean-shifted return as $\mathbf{r}_{t}=\mathbf{q}_{t}-\mathbb{E}\left[\mathbf{q}_{t}\right]$
	and let $\mathbf{x}_{t}=\left[\begin{array}{c}
		\mathbf{r}_{t}\\
		\mathbf{q}_{t}
	\end{array}\right]$. It follows that the evolution of $\mathbf{x}_{t}$ can be characterized
	by the canonical form of
	\begin{align}
		& \mathbf{x}_{t+1}=\left(\text{\ensuremath{\mathbf{A}}}+\mathbf{\Delta}_{t}\right)\mathbf{x}_{t}+\mathbf{u}_{t}^{\left(\mathbf{M}\right)}+\widetilde{\mathbf{w}}_{t},\label{eq: Example 1-dynamics}\\
		& \mathbf{u}_{t}^{\left(\mathbf{M}\right)}=\mathbf{M}\widetilde{\mathbf{w}}_{t-1}\ \ with\ \ \mathbf{B}=\mathbf{I},\mathbf{K}=\mathbf{0},\forall1\leq t<T.\label{eq: Example 1 u_t (M)}
	\end{align}
	where $\mathbf{M}=\frac{h}{24-h}\left[\mathbf{m}^{\left(1\right)};\cdots;\mathbf{m}^{\left(L\right)}\right],$
	$\mathbf{w}_{t-1}=\left(24-h\right)\left[w_{t}^{\left(1\right)},\cdots,w_{t}^{\left(L\right)}\right]^{T},$
	$\widetilde{\mathbf{w}}_{t}=\left[\begin{array}{c}
		\mathbf{w}_{t}-\mathbb{E}\left[\mathbf{w}_{t}\right]\\
		\mathbf{w}_{t}
	\end{array}\right],$ the state transition matrix $\mathbf{A}=\mathbf{I}_{2L\times2L}$
	and the policy-dependent state transition perturbation $\mathbf{\Delta}_{t}$
	is given by (\ref{eq:Example 1-A}) and (\ref{eq:Example 1-V(M)}).
	\begin{align}
		& \mathbf{\Delta}_{t}=\left[\begin{array}{cc}
			\mathbb{E}\left[\mathbf{V}_{t}^{\left(\mathbf{M}\right)}\right]-\mathbf{I}_{L\times L} & \mathbf{V}_{t}^{\left(\mathbf{M}\right)}-\mathbb{E}\left[\mathbf{V}_{t}^{\left(\mathbf{M}\right)}\right]\\
			\mathbf{0}_{L\times L} & \mathbf{V}_{t}^{\left(\mathbf{M}\right)}-\mathbf{I}_{L\times L}
		\end{array}\right],\label{eq:Example 1-A}\\
		& \mathbf{V}_{t}^{\left(\mathbf{M}\right)}=\mathrm{diag}\left(\left[\sum_{i=1}^{L}m^{1,i}e^{\left(\frac{r-\frac{1}{2}\left(v_{t}^{\left(i\right)}\right)^{2}}{T}+\frac{v_{t}^{\left(i\right)}}{\sqrt{T}}\right)},\cdots,\sum_{i=1}^{L}m^{L,i}e^{\left(\frac{r-\frac{1}{2}\left(v_{t}^{\left(i\right)}\right)^{2}}{T}+\frac{v_{t}^{\left(i\right)}}{\sqrt{T}}\right)}\right]\right).\label{eq:Example 1-V(M)}
	\end{align}
	The investment risk minimization problem can thus be casted as 
	\begin{align*}
		\min_{\mathbf{M}\in \mathbb{M}}\ \  & C_{T}^{\mathbf{M}}=\mathbb{E}_{\left\{ \mathbf{\Delta}_{t}\right\} ,\left\{ \widetilde{\mathbf{w}}_{t}\right\} }\left[\sum_{t=1}^{T}\left(\left\Vert \left[\mathbf{I}_{L},\mathbf{0}_{L\times L}\right]\cdot\mathbf{x}_{t}\right\Vert ^{2}+\left\Vert \left[\mathbf{I}_{L},\mathbf{0}_{L\times L}\right]\cdot\text{\ensuremath{\mathbf{u}_{t}^{\left(\mathbf{M}\right)}}}\right\Vert ^{2}\right)\right],\ \ \ \ s.t.\ \ (\ref{eq: Example 1-dynamics})-(\ref{eq:Example 1-V(M)}).
	\end{align*}
	Tackling (\ref{eq: PP optimal formulation}) leads to an optimized
	stock investment policy $\mathbf{M}$ which takes the effects of policy-dependent
	random perturbations to the dynamics of stock returns into account.
\end{example}

	In the simulation, we set the number of stocks $L=10$ and the number of trading days $T=60$. The initial policy $\mathbf{M}_0$ is randomly chosen within the feasible set $\mathbb{M}=\{\mathbf{M}:\sum_{i=1}^{10}m^{l,i}=1, \forall 1\leq l\leq 10\}$. The entries of noise term ${\mathbf{w}}_t$ are independently and uniformly drawn from the interval $[0,1]$. For each $1\leq i\leq10$ and each $0\leq t<60$, we first obtain $\tilde{v}_t^{(i)}$ by sampling from a Gaussian distribution, i.e., $\tilde{v}_t^{(i)}\sim\ca{N}(\log(\varepsilon_t),0.2)$, where $\varepsilon_t$ denotes the sensitivity at $t$-th trading day. The $\tilde{v}_t^{(i)}$ is then projected to the interval $[-0.6,0.6]$ to obtain $v_t^{(i)}$ in (\ref{eq:Example 1-V(M)}). We let the total number iterations $N=1000$, and the stepsize $\eta_{n}$ in Algorithm \ref{algo:algo_stochastic} is set to be $0.01$, $\forall 0\leq n \leq 1000$.
	
	 The sensitivities are shown as follows:

\textbf{Sensitivities with Ascending Sequence.} $\varepsilon_{\mathrm{a}}$=[1.25797477e-07 5.03189910e-07 1.13217730e-06 2.01275964e-06
3.14493693e-06 4.52870919e-06 6.16407639e-06 8.05103855e-06
1.01895957e-05 1.25797477e-05 1.52214948e-05 1.81148367e-05
2.12597737e-05 2.46563056e-05 2.83044324e-05 3.22041542e-05
3.63554710e-05 4.07583827e-05 4.54128893e-05 5.03189910e-05
4.66004175e-03 5.35796616e-03 6.12231163e-03 6.95609731e-03
7.86234234e-03 8.84406585e-03 9.90428699e-03 1.10460249e-02
1.22722987e-02 1.35861276e-02 1.49905306e-02 1.64885270e-02
1.80831358e-02 1.97773762e-02 2.15742674e-02 2.34768284e-02
2.54880785e-02 2.76110367e-02 2.98487222e-02 3.22041542e-02
4.33504397e-02 4.66004175e-02 5.00089002e-02 5.35796616e-02
5.73164756e-02 6.12231163e-02 6.53033575e-02 6.95609731e-02
7.39997371e-02 7.86234234e-02 8.34358059e-02 8.84406585e-02
9.36417552e-02 9.90428699e-02 1.04647776e-01 1.10460249e-01
1.16484061e-01 1.22722987e-01 1.29180801e-01 1.35861276e-01]

\textbf{Sensitivities with Descending Sequence.} $\varepsilon_{\mathrm{d}}$=[1.35861276e-01 1.29180801e-01 1.22722987e-01 1.16484061e-01
1.10460249e-01 1.04647776e-01 9.90428699e-02 9.36417552e-02
8.84406585e-02 8.34358059e-02 7.86234234e-02 7.39997371e-02
6.95609731e-02 6.53033575e-02 6.12231163e-02 5.73164756e-02
5.35796616e-02 5.00089002e-02 4.66004175e-02 4.33504397e-02
3.22041542e-02 2.98487222e-02 2.76110367e-02 2.54880785e-02
2.34768284e-02 2.15742674e-02 1.97773762e-02 1.80831358e-02
1.64885270e-02 1.49905306e-02 1.35861276e-02 1.22722987e-02
1.10460249e-02 9.90428699e-03 8.84406585e-03 7.86234234e-03
6.95609731e-03 6.12231163e-03 5.35796616e-03 4.66004175e-03
5.03189910e-05 4.54128893e-05 4.07583827e-05 3.63554710e-05
3.22041542e-05 2.83044324e-05 2.46563056e-05 2.12597737e-05
1.81148367e-05 1.52214948e-05 1.25797477e-05 1.01895957e-05
8.05103855e-06 6.16407639e-06 4.52870919e-06 3.14493693e-06
2.01275964e-06 1.13217730e-06 5.03189910e-07 1.25797477e-07]

\textbf{Sensitivities with Random Sequence.} $\varepsilon_{\mathrm{r}}$=[2.54880785e-02 3.63554710e-05 2.34768284e-02 1.64885270e-02
1.25797477e-07 9.36417552e-02 1.10460249e-02 6.16407639e-06
2.76110367e-02 5.00089002e-02 1.16484061e-01 1.04647776e-01
2.15742674e-02 2.46563056e-05 3.14493693e-06 4.66004175e-03
4.54128893e-05 3.22041542e-02 1.22722987e-02 4.33504397e-02
8.84406585e-03 6.95609731e-03 5.03189910e-05 1.01895957e-05
7.39997371e-02 1.97773762e-02 7.86234234e-02 1.35861276e-02
8.84406585e-02 1.22722987e-01 4.07583827e-05 8.05103855e-06
2.98487222e-02 9.90428699e-03 6.95609731e-02 8.34358059e-02
6.12231163e-03 1.81148367e-05 1.52214948e-05 2.12597737e-05
6.53033575e-02 5.03189910e-07 5.35796616e-03 9.90428699e-02
7.86234234e-03 1.10460249e-01 4.66004175e-02 2.83044324e-05
3.22041542e-05 2.01275964e-06 1.13217730e-06 6.12231163e-02
5.35796616e-02 1.49905306e-02 1.25797477e-05 1.80831358e-02
4.52870919e-06 1.35861276e-01 1.29180801e-01 5.73164756e-02]

The non-ordered lists of sensitivity values in epsilon $\varepsilon_{\mathrm{a}}$,$\varepsilon_{\mathrm{d}}$, and $\varepsilon_{\mathrm{r}}$ are the same. The state trajectory $\mathbf{x}_t$ and control action $\mathbf{u}_t$ are generated according to \eqref{eq: Example 1-dynamics}- \eqref{eq:Example 1-V(M)} based on the above system parameter configurations.

\section{Auxiliariy Lemmas for Per Stage Cost Function }\label{app:auxiliraries}
In this part, we provide several lemmas to characterize the gradient
properties and the smoothness of the per stage cost $c_{t}$. We first
summarize a collection of notations. We denote $\nabla_{\mathbf{x}}c_{t}\left(\mathbf{x}_{t}^{\left(\mathbf{M}\right)},\mathbf{u}_{t}^{\left(\mathbf{M}\right)}\right)$
and $\nabla_{\mathbf{u}}c_{t}\left(\mathbf{x}_{t}^{\left(\mathbf{M}\right)},\mathbf{u}_{t}^{\left(\mathbf{M}\right)}\right)$
as the gradient of $c_{t}$ w.r.t. the system state $\mathbf{x}_{t}^{\left(\mathbf{M}\right)}$
and control action $\mathbf{u}_{t}^{\left(\mathbf{M}\right)},$ respectively.
We denote $\nabla_{\mathbf{M}}c_{t}\left(\mathbf{x}_{t}^{\left(\mathbf{M}\right)},\mathbf{u}_{t}^{\left(\mathbf{M}\right)}\right)$
as the gradient of $c_{t}$ w.r.t. the variable $\mathbf{M}$ in $\mathbf{x}_{t}^{\left(\mathbf{M}\right)}$
and $\mathbf{u}_{t}^{\left(\mathbf{M}\right)}$ under any given realization
of the policy-dependent state transition perturbations $\left\{ \mathbf{\Delta}_{i},0\leq i<t\right\} .$ 

Based on the chain rule and the relationship between $\mathbf{M}$
and the pair $\left(\mathbf{x}_{t},\mathbf{u}_{t}\right)$ in  {\eqref{eq:DAP u_t^M}
	and \eqref{eq:x_t representation}}, the expression of $\nabla_{\mathbf{M}}c_{t}\left(\mathbf{x}_{t}^{\left(\mathbf{M}\right)},\mathbf{u}_{t}^{\left(\mathbf{M}\right)}\right)$
is given by
\begin{align}
	& \nabla_{\mathbf{M}}c_{t}\left(\mathbf{x}_{t}^{\left(\mathbf{M}\right)},\mathbf{u}_{t}^{\left(\mathbf{M}\right)}\right)\label{eq: grad M ct}\\
	&=\sum_{i=0}^{t-1} \left(\prod_{j=i+1}^{t-1}\left(\mathbf{1}_{j<t}\left(\widetilde{\mathbf{A}}+\mathbf{\Delta}_{j}\right)+\mathbf{1}_{j=t}\mathbf{I}\right)\mathbf{B}\right)^{\top}\nabla_{\mathbf{x}}c_{t}\left(\mathbf{x}_{t}^{\left(\mathbf{M}\right)},\mathbf{u}_{t}^{\left(\mathbf{M}\right)}\right)\left(\left[\mathbf{w}\right]_{i-1}^{H}\right)^{\top} \nonumber\\
	& -\sum_{i=0}^{t-1}\left\{ \left(\mathbf{K}\prod_{j=i+1}^{t-1}\left(\mathbf{1}_{j<t}\left(\widetilde{\mathbf{A}}+\mathbf{\Delta}_{j}\right)+\mathbf{1}_{j=t}\mathbf{I}\right)\mathbf{B}\right)^{\top}\nabla_{\mathbf{u}}c_{t}\left(\mathbf{x}_{t}^{\left(\mathbf{M}\right)},\mathbf{u}_{t}^{\left(\mathbf{M}\right)}\right)\left(\left[\mathbf{w}\right]_{i-1}^{H}\right)^{\top}\right\} \nonumber \\
	& +\nabla_{\mathbf{u}}c_{t}\left(\mathbf{x}_{t}^{\left(\mathbf{M}\right)},\mathbf{u}_{t}^{\left(\mathbf{M}\right)}\right)\left(\left[\mathbf{w}\right]_{t-1}^{H}\right)^{\top}.\nonumber
\end{align}
For the ease of notation, without ambiguity we also occasionally use
the notation $c_{t}\left(\mathbf{M};\left\{ \mathbf{\Delta}_{i}\right\} _{0\leq i<t}\right)$
to denote $c_{t}\left(\mathbf{x}_{t}^{\left(\mathbf{M}\right)},\mathbf{u}_{t}^{\left(\mathbf{M}\right)}\right)$
under a sequence of policy-dependent state transition perturbations
$\left\{ \mathbf{\Delta}_{i},0\leq i<t\right\} .$ In this case, $\nabla_{\mathbf{M}}c_{t}\left(\mathbf{M};\left\{ \mathbf{\Delta}_{i}\right\} _{0\leq i<t}\right)$
denote the gradient taken w.r.t. the first argument $\mathbf{M}.$ 

Moreover, the total cost function can be represented as 
\begin{align}
	& J_{T}\left(\mathbf{M};\left\{ \mathbf{\Delta}_{t}\right\} _{0\leq t<T}\right)=\sum_{t=0}^{T}c_{t}\left(\mathbf{M};\left\{ \mathbf{\Delta}_{i}\right\} _{0\leq i<t}\right).
\end{align}
It is clear that
\begin{align}
	& C_{T}\left(\mathbf{M};\mathbf{M}^{\prime}\right)=\mathbb{E}_{\mathbf{x}_{0},\left\{ \mathbf{\Delta}_{t}\sim\mathcal{D}_{t}\left(\mathbf{M}^{\prime}\right)\right\} _{0\leq i<T},\left\{ \mathbf{w}_{i}\right\} _{0\leq i<T}}J_{T}\left(\mathbf{M};\left\{ \mathbf{\Delta}_{t}\right\} _{0\leq t<T}\right).
\end{align}
Unless otherwise specified, $\nabla J_{T}\left(\mathbf{M};\left\{ \mathbf{\Delta}_{t}\right\} _{0\leq t<T}\right)$
and $\nabla C_{T}\left(\mathbf{M};\mathbf{M}^{\prime}\right)$ denotes
the gradient taken w.r.t. the first argument $\mathbf{M}$.

\textbf{Convexity.} Denote the per stage expected cost 
\begin{align*}
	& f_{t}\left(\mathbf{M};\mathbf{M}_{1}\right)=\mathbb{E}_{\mathbf{x}_{0},\left\{ \mathbf{\Delta}_{i}\sim\mathcal{D}_{i}\left(\mathbf{M}_{1}\right)\right\} _{0\leq i<t},\left\{ \mathbf{w}_{i}\right\} _{0\leq i<t}}\left[c_{t}\left(\mathbf{M};\left\{ \mathbf{\Delta}_{i}\right\} _{0\leq i<t}\right)\right],\forall1\leq t\leq T,
\end{align*}
where the distribution of policy-dependent perturbation is changed
from $\mathbf{\Delta}_{i}\sim\mathcal{D}_{i}\left(\mathbf{M}\right)$
to $\mathbf{\Delta}_{i}\sim\mathcal{D}_{i}\left(\mathbf{M}_{1}\right)$,
$\forall0\leq i<t.$ 

We have the following lemma characterizing the convexity of $f_{t}\left(\mathbf{M};\mathbf{M}_{1}\right),\forall1\leq t\leq T.$
\begin{lemma}\label{p_stage_a_cost}
	The per stage expected cost $f_{t}\left(\mathbf{M};\mathbf{M}^{\prime}\right)$
	is a convex function of $\mathbf{M}$, $\forall\mathbf{M},\mathbf{M}^{\prime}\ensuremath{\in\mathbb{M}},$
	$\forall1\leq t\leq T$. 
\end{lemma}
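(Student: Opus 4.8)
The plan is to establish convexity pointwise, for each fixed realization of the randomness, and then to pass to the expectation. The structural fact I would exploit is that, for any \emph{fixed} realization of the initial state $\mathbf{x}_0$, the perturbations $\left\{\mathbf{\Delta}_i\right\}_{0\le i<t}$, and the disturbances $\left\{\mathbf{w}_i\right\}$, both the state $\mathbf{x}_t^{(\mathbf{M})}$ and the action $\mathbf{u}_t^{(\mathbf{M})}$ are \emph{affine} functions of the policy $\mathbf{M}$.

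First I would invoke the explicit representation of $\mathbf{x}_t^{(\mathbf{M})}$ from Lemma~\ref{Lemma: x_t representation M}. Inspecting \eqref{eq:x_t representation}, the only place $\mathbf{M}$ enters is through the middle term $\sum_{i=0}^{t-1}\prod_{j=i+1}^{t-1}(\cdots)\mathbf{B}\mathbf{M}[\mathbf{w}]_{i-1}^{H}$, which is linear in $\mathbf{M}$, while the first and third terms are independent of $\mathbf{M}$. Hence, with the realization held fixed, the map $\mathbf{M}\mapsto\mathbf{x}_t^{(\mathbf{M})}$ is affine. Combining this with the disturbance-action policy \eqref{eq:DAP u_t^M}, namely $\mathbf{u}_t^{(\mathbf{M})}=-\mathbf{K}\mathbf{x}_t^{(\mathbf{M})}+\mathbf{M}[\mathbf{w}]_{t-1}^{H}$, and observing that the $-\mathbf{K}\mathbf{x}_t^{(\mathbf{M})}$ term is affine (by the previous step) and the $\mathbf{M}[\mathbf{w}]_{t-1}^{H}$ term is linear, I conclude that $\mathbf{M}\mapsto\mathbf{u}_t^{(\mathbf{M})}$ is affine as well. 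With affineness in hand, I would apply Assumption~A\ref{A4}, which guarantees that $c_t(\mathbf{x},\mathbf{u})$ is $\mu$-strongly (hence in particular) convex jointly in $(\mathbf{x},\mathbf{u})$. Since the composition of a convex function with an affine map is convex, the map $\mathbf{M}\mapsto c_t\bigl(\mathbf{x}_t^{(\mathbf{M})},\mathbf{u}_t^{(\mathbf{M})}\bigr)$ is convex for each fixed realization of $\mathbf{x}_0,\left\{\mathbf{\Delta}_i\right\},\left\{\mathbf{w}_i\right\}$.

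Finally I would take the expectation. The crucial observation, and the only point that genuinely requires care, is that in $f_t(\mathbf{M};\mathbf{M}^{\prime})$ the perturbations are sampled as $\mathbf{\Delta}_i\sim\mathcal{D}_i(\mathbf{M}^{\prime})$, so their distribution depends only on the \emph{second} argument $\mathbf{M}^{\prime}$, which is held fixed. The expectation is therefore integration against a fixed probability measure that does not depend on the first argument $\mathbf{M}$, and integrating a family of convex functions of $\mathbf{M}$ against a fixed measure preserves convexity. This yields convexity of $f_t(\cdot;\mathbf{M}^{\prime})$ in its first argument for every $\mathbf{M}^{\prime}\in\mathbb{M}$ and every $1\le t\le T$, completing the proof. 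The main subtlety, rather than any computational obstacle, is precisely this decoupling: if the sampling distribution depended on the first argument $\mathbf{M}$, the measure would vary with $\mathbf{M}$ and convexity could break, so the argument hinges entirely on keeping the cost-variable $\mathbf{M}$ separate from the distribution-variable $\mathbf{M}^{\prime}$.
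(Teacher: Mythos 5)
Your proposal is correct and follows essentially the same route as the paper's proof: the paper likewise uses the explicit representation \eqref{eq:x_t representation} to show $\mathbf{x}_t^{(\theta\mathbf{M}_1+(1-\theta)\mathbf{M}_2)}=\theta\mathbf{x}_t^{(\mathbf{M}_1)}+(1-\theta)\mathbf{x}_t^{(\mathbf{M}_2)}$ for each fixed realization (i.e., affineness in $\mathbf{M}$), invokes the convexity of $c_t$ from Assumption~A\ref{A4}, and then takes expectation against the measure induced by $\mathbf{M}^{\prime}$, which is held fixed. Your explicit remark that the expectation is over a distribution depending only on the second argument is exactly the decoupling the paper's proof relies on implicitly.
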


\begin{proof}
	We prove this lemma following the convex property of the function
	$c_{t}\left(\mathbf{x},\mathbf{u}\right)$ in  {Assumption A \ref{A4}}. Let $0<\theta<1$ and $\mathbf{M}_{1},\text{\ensuremath{\mathbf{M}}}_{2},\mathbf{M}^{\prime}\in\mathbb{M}$.
	Consder the weighted policy $\theta\mathbf{M}_{1}+\left(1-\theta\right)\mathbf{M}_{2}$,
	we have 
	\begin{align}
		& \mathbf{x}_{t}^{\left(\theta\mathbf{M}_{1}+\left(1-\theta\right)\mathbf{M}_{2}\right)}=\prod_{i=0}^{t-1}\left(\widetilde{\mathbf{A}}+\mathbf{\Delta}_{i}\right)\mathbf{x}_{0}+\sum_{i=0}^{t-1}\prod_{j=i+1}^{t-1}\left(\mathbf{1}_{j<t}\left(\widetilde{\mathbf{A}}+\mathbf{\Delta}_{j}\right)+\mathbf{1}_{j=t}\mathbf{I}\right)\mathbf{B}\\
		& \cdot\left(\theta\mathbf{M}_{1}+\left(1-\theta\right)\mathbf{M}_{2}\right)\mathbf{M}\left[\mathbf{w}\right]_{i-1}^{H}+\sum_{i=0}^{t-1}\prod_{j=i+1}^{t-1}\left(\mathbf{1}_{j<t}\left(\widetilde{\mathbf{A}}+\mathbf{\Delta}_{j}\right)+\mathbf{1}_{j=t}\mathbf{I}\right)\mathbf{w}_{i}\nonumber\\
		& =\theta\left(\prod_{i=0}^{t-1}\left(\widetilde{\mathbf{A}}+\mathbf{\Delta}_{i}\right)\mathbf{x}_{0}+\sum_{i=0}^{t-1}\prod_{j=i+1}^{t-1}\left(\mathbf{1}_{j<t}\left(\widetilde{\mathbf{A}}+\mathbf{\Delta}_{j}\right)+\mathbf{1}_{j=t}\mathbf{I}\right)\mathbf{B}\mathbf{M}_{1}\left[\mathbf{w}\right]_{i-1}^{H}\right.\nonumber\\
		& \left.+\sum_{i=0}^{t-1}\prod_{j=i+1}^{t-1}\left(\mathbf{1}_{j<t}\left(\widetilde{\mathbf{A}}+\mathbf{\Delta}_{j}\right)+\mathbf{1}_{j=t}\mathbf{I}\right)\mathbf{w}_{i}\right)+\left(1-\theta\right)\nonumber\\
		& \cdot\left(\prod_{i=0}^{t-1}\left(\widetilde{\mathbf{A}}+\mathbf{\Delta}_{i}\right)\mathbf{x}_{0}+\sum_{i=0}^{t-1}\prod_{j=i+1}^{t-1}\left(\mathbf{1}_{j<t}\left(\widetilde{\mathbf{A}}+\mathbf{\Delta}_{j}\right)+\mathbf{1}_{j=t}\mathbf{I}\right)\mathbf{B}\mathbf{M}_{2}\left[\mathbf{w}\right]_{i-1}^{H}\right.\nonumber\\
		& \left.+\sum_{i=0}^{t-1}\prod_{j=i+1}^{t-1}\left(\mathbf{1}_{j<t}\left(\widetilde{\mathbf{A}}+\mathbf{\Delta}_{j}\right)+\mathbf{1}_{j=t}\mathbf{I}\right)\mathbf{w}_{i}\right)\nonumber\\
		& =\theta\mathbf{x}_{t}^{\mathbf{M}_{1}}+\left(1-\theta\right)\mathbf{x}_{t}^{\mathbf{M}_{2}}.\nonumber
	\end{align}
	As a result, 
	\begin{align}
		& f_{t}\left(\theta\mathbf{M}_{1}+\left(1-\theta\right)\mathbf{M}_{2};\mathbf{M}^{\prime}\right)\label{eq:convex f_t}\\
		&=\mathbb{E}_{\mathbf{x}_{0},\left\{ \mathbf{\Delta}_{i}\sim\mathcal{D}_{i}\left(\mathbf{M}^{\prime}\right)\right\} _{0\leq i<t},\left\{ \mathbf{w}_{i}\right\} _{0\leq i<t}}\left[c_{t}\left(\mathbf{x}_{t}^{\left(\mathbf{M}_{1}+\left(1-\theta\right)\mathbf{M}_{2}\right)},\mathbf{u}_{t}^{\left(\mathbf{M}_{1}+\left(1-\theta\right)\mathbf{M}_{2}\right)}\right)\right]\nonumber\\
		& =\mathbb{E}_{\mathbf{x}_{0},\left\{ \mathbf{\Delta}_{i}\sim\mathcal{D}_{i}\left(\mathbf{M}^{\prime}\right)\right\} _{0\leq i<t},\left\{ \mathbf{w}_{i}\right\} _{0\leq i<t}}\left[c_{t}\left(\theta\mathbf{x}_{t}^{\mathbf{M}_{1}}+\left(1-\theta\right)\mathbf{x}_{t}^{\mathbf{M}_{2}},\theta\mathbf{u}_{t}^{\mathbf{M}_{1}}+\left(1-\theta\right)\mathbf{u}_{t}^{\mathbf{M}_{2}}\right)\right]\nonumber \\
		& \overset{\left(\ref{eq:convex f_t}.a\right)}{\leq}\mathbb{E}_{\mathbf{x}_{0},\left\{ \mathbf{\Delta}_{i}\sim\mathcal{D}_{i}\left(\mathbf{M}^{\prime}\right)\right\} _{0\leq i<t},\left\{ \mathbf{w}_{i}\right\} _{0\leq i<t}}\left[\theta c_{t}\left(\mathbf{x}_{t}^{\mathbf{M}_{1}},\mathbf{u}_{t}^{\mathbf{M}_{1}}\right)+\left(1-\theta\right)c_{t}\left(\mathbf{x}_{t}^{\mathbf{M}_{2}},\mathbf{u}_{t}^{\mathbf{M}_{2}}\right)\right]\nonumber \\
		& =\theta f_{t}\left(\mathbf{M}_{1};\mathbf{M}^{\prime}\right)+\left(1-\theta\right)f_{t}\left(\mathbf{M}_{2};\mathbf{M}^{\prime}\right),\nonumber 
	\end{align}
	where inequality ($\ref{eq:convex f_t}.a$) holds becasue of the convexity
	of $c_{t}\left(\mathbf{x},\text{\ensuremath{\mathbf{u}}}\right)$.
\end{proof}

We next have a key lemma characterizing the strong convexity of $f_{t}\left(\mathbf{M};\mathbf{M}^{\prime}\right),\forall H\leq t\leq T.$
\begin{lemma}\label{s_cvx}
	The per stage expected cost $f_{t}\left(\mathbf{M};\mathbf{M}^{\prime}\right)$
	is a  {$\min\left\{ \frac{\mu}{2},\frac{\mu\sigma^{2}\gamma^{2}}{64\kappa^{10}}\right\} $}-strongly
	convex function of $\mathbf{M},$ $\forall\mathbf{M},\mathbf{M}^{\prime}\ensuremath{\in\mathbb{M}},$
	$\forall H\leq t\leq T$. 
\end{lemma}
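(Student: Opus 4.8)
The plan is to transfer the joint $\mu$-strong convexity of the per stage cost $c_t(\mathbf{x},\mathbf{u})$ (Assumption A\ref{A4}) through the state and control maps $\mathbf{M}\mapsto(\mathbf{x}_t^{(\mathbf{M})},\mathbf{u}_t^{(\mathbf{M})})$, both of which are \emph{affine} in $\mathbf{M}$ by \eqref{eq:x_t representation} and \eqref{eq:DAP u_t^M}. Writing $\Delta\mathbf{M}=\mathbf{M}_1-\mathbf{M}_2$, $\mathbf{d}_x=\mathbf{x}_t^{(\mathbf{M}_1)}-\mathbf{x}_t^{(\mathbf{M}_2)}$ and $\mathbf{d}_u=\mathbf{u}_t^{(\mathbf{M}_1)}-\mathbf{u}_t^{(\mathbf{M}_2)}$, I would apply the strong convexity inequality of $c_t$ pointwise at $(\mathbf{x}_t^{(\mathbf{M}_1)},\mathbf{u}_t^{(\mathbf{M}_1)})$ and $(\mathbf{x}_t^{(\mathbf{M}_2)},\mathbf{u}_t^{(\mathbf{M}_2)})$ and then take expectation over $\mathbf{x}_0$, $\{\mathbf{\Delta}_i\sim\mathcal{D}_i(\mathbf{M}^{\prime})\}$ and $\{\mathbf{w}_i\}$. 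Because the maps are affine, $\mathbf{d}_x$ and $\mathbf{d}_u$ are exactly the directional increments of $\mathbf{x}_t^{(\cdot)}$ and $\mathbf{u}_t^{(\cdot)}$, so the first-order terms reassemble through the chain rule into $\mathrm{Tr}((\nabla f_t(\mathbf{M}_2;\mathbf{M}^{\prime}))^{\top}\Delta\mathbf{M})$. This reduces the claim to a variance-type lower bound $\mathbb{E}[\|\mathbf{d}_x\|^2+\|\mathbf{d}_u\|^2]\geq C\,\|\Delta\mathbf{M}\|_F^2$, after which $f_t$ is $\mu C$-strongly convex in $\mathbf{M}$.

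The crux is this lower bound, and the difficulty is precisely the one flagged before the lemma: the deterministic Jacobian of $\mathbf{M}\mapsto(\mathbf{x}_t,\mathbf{u}_t)$ need not be full column rank, so no pointwise estimate of this form can hold, and the bound must be extracted from the stochasticity of the disturbances. My key step would be to use the control law \eqref{eq:DAP u_t^M}, which gives the exact identity $\Delta\mathbf{M}\,[\mathbf{w}]_{t-1}^H=\mathbf{d}_u+\mathbf{K}\mathbf{d}_x$. Since $\|\mathbf{K}\|\leq\kappa$ and $\kappa\geq1$, Cauchy--Schwarz yields the pointwise estimate $\|\Delta\mathbf{M}\,[\mathbf{w}]_{t-1}^H\|^2\leq 2\kappa^2(\|\mathbf{d}_x\|^2+\|\mathbf{d}_u\|^2)$, hence $\mathbb{E}[\|\mathbf{d}_x\|^2+\|\mathbf{d}_u\|^2]\geq\frac{1}{2\kappa^2}\,\mathbb{E}\|\Delta\mathbf{M}\,[\mathbf{w}]_{t-1}^H\|^2$.

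It then remains to lower bound $\mathbb{E}\|\Delta\mathbf{M}\,[\mathbf{w}]_{t-1}^H\|^2$. Expanding $\Delta\mathbf{M}\,[\mathbf{w}]_{t-1}^H=\sum_{i=1}^H \Delta\mathbf{M}^{(i)}\mathbf{w}_{t-i}$ and invoking Assumption A\ref{Assumption noises wt} (zero mean, uncorrelatedness $\mathbb{E}[\mathbf{w}_{t_1}\mathbf{w}_{t_2}^{\top}]=\mathbf{0}$, and covariance lower bound $\mathbb{E}[\mathbf{w}_t\mathbf{w}_t^{\top}]\succeq\sigma^2\mathbf{I}$), the cross terms vanish and
\begin{align*}
	\mathbb{E}\Big\|\sum_{i=1}^H \Delta\mathbf{M}^{(i)}\mathbf{w}_{t-i}\Big\|^2=\sum_{i=1}^H \mathrm{Tr}\!\big(\Delta\mathbf{M}^{(i)}\,\mathbb{E}[\mathbf{w}_{t-i}\mathbf{w}_{t-i}^{\top}]\,(\Delta\mathbf{M}^{(i)})^{\top}\big)\geq \sigma^2\sum_{i=1}^H\|\Delta\mathbf{M}^{(i)}\|_F^2=\sigma^2\|\Delta\mathbf{M}\|_F^2.
\end{align*}
Here the restriction $H\leq t\leq T$ is essential: it guarantees that every index $t-i$ with $1\leq i\leq H$ is nonnegative, so no block of $[\mathbf{w}]_{t-1}^H$ is zero-padded and all $H$ blocks of $\Delta\mathbf{M}$ are excited by a genuinely random, nondegenerate disturbance. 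Combining the last two displays gives $C\geq\frac{\sigma^2}{2\kappa^2}$, and therefore $\mu C$-strong convexity of $f_t$ in $\mathbf{M}$.

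The main obstacle is thus the rank-deficiency of the policy-to-trajectory map; the resolution above sidesteps it by reading off $\Delta\mathbf{M}$ from the \emph{control} increment rather than the state increment, where the fresh disturbance window $[\mathbf{w}]_{t-1}^H$ appears linearly and with a uniformly lower-bounded conditional covariance. I expect this clean route to produce a constant of order $\mu\sigma^2/\kappa^2$; recovering the exact stated value $\min\{\mu/2,\ \mu\sigma^2\gamma^2/(64\kappa^{10})\}$ — in particular the $\gamma$ and higher $\kappa$ powers — would instead arise from lower bounding the contribution through the \emph{state} increment $\mathbf{d}_x$, using the strongly stable factorization $\widetilde{\mathbf{A}}=\mathbf{Q}\mathbf{L}\mathbf{Q}^{-1}$ to isolate the freshest-noise summand of \eqref{eq:x_t representation} and to control the remaining correlated terms, with the $\min$ reflecting a magnitude case split between the state and control routes.
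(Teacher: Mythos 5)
Your proof is correct, and the core step is genuinely different from (and simpler than) the paper's. Both arguments begin identically: apply the pointwise $\mu$-strong convexity of $c_t$, use the affineness of $\mathbf{M}\mapsto(\mathbf{x}_t^{(\mathbf{M})},\mathbf{u}_t^{(\mathbf{M})})$ to reassemble the first-order terms into $\mathrm{Tr}\left(\nabla^{\top}f_t(\mathbf{M}_2;\mathbf{M}')(\mathbf{M}_1-\mathbf{M}_2)\right)$, and reduce everything to the variance lower bound $\mathbb{E}\left[\|\mathbf{d}_x\|^2+\|\mathbf{d}_u\|^2\right]\geq C\|\Delta\mathbf{M}\|_F^2$. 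Where you diverge is in producing $C$: the paper conditions on $\{\mathbf{w}_i\}$, uses Jensen together with $\mathbb{E}[\mathbf{\Delta}_i]=\mathbf{0}$ to replace the random transition products by $\widetilde{\mathbf{A}}^i$, vectorizes to write the quadratic form as $\delta_{\mathbf{M}}^{\top}(\Omega\otimes\mathbb{E}[\mathbf{w}_t\mathbf{w}_t^{\top}])\delta_{\mathbf{M}}$, and lower-bounds $\Omega$ by importing Lemmas F.1--F.2 of \cite{agarwal2019logarithmic} ($\Psi\succeq\frac{1}{4\kappa^4}\mathbf{I}$, $\|\Theta\|\leq\gamma^{-1}\kappa^3$) with a case split on $\|\mathbf{I}_H\otimes\mathbf{B}\|$ versus $\frac{\gamma}{4\kappa^3}$ --- this is exactly where the $\min\{\frac{1}{2},\frac{\gamma^2}{64\kappa^{10}}\}$ originates. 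You instead read $\Delta\mathbf{M}$ off the control increment via the exact DAP identity $\Delta\mathbf{M}[\mathbf{w}]_{t-1}^H=\mathbf{d}_u+\mathbf{K}\mathbf{d}_x$, getting $C\geq\frac{\sigma^2}{2\kappa^2}$ from $\|\mathbf{K}\|\leq\kappa$ and the block-diagonal noise covariance; the restriction $t\geq H$ plays the same role in both proofs (it guarantees no block of $[\mathbf{w}]_{t-1}^H$ is zero-padded, which is also what Lemma F.1 needs). Your route avoids the conditioning/Jensen step, the Kronecker bookkeeping, the external lemmas, and any dependence on $\mathbf{B}$, and since $\gamma<1\leq\kappa$ your constant $\frac{\mu\sigma^2}{2\kappa^2}$ dominates the paper's $\min\{\frac{\mu\sigma^2}{2},\frac{\mu\sigma^2\gamma^2}{64\kappa^{10}}\}$, so the stated lemma follows a fortiori; your closing remark correctly identifies that the paper's weaker constant is an artifact of routing the bound through the state increment and the strongly stable factorization rather than through the control increment.
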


\begin{proof}
	We prove this lemma following the strong convexity of the function
	$c_{t}\left(\mathbf{x},\mathbf{u}\right)$ in  {Assumption A \ref{A4}}. Let $\mathbf{M}_{1},\text{\ensuremath{\mathbf{M}}}_{2},\mathbf{M}^{\prime}\in\mathbb{M}$.
	Consider a time step $t$ such that $H\leq t\leq T.$ For any given
	realizations of $\left\{ \mathbf{\Delta}_{i}\right\} _{0\leq i<t}$,
	where $\mathbf{\Delta}_{i}\sim\mathcal{D}_{i}\left(\mathbf{M}^{\prime}\right),\forall0\leq i<t,$
	we have 
	
	\begin{align}
		& c_{t}\left(\mathbf{x}_{t}^{\left(\mathbf{M}_{1}\right)},\mathbf{u}_{t}^{\left(\mathbf{M}_{1}\right)}\right)-c_{t}\left(\mathbf{x}_{t}^{\left(\mathbf{M}_{2}\right)},\mathbf{u}_{t}^{\left(\mathbf{M}_{2}\right)}\right)\label{eq:ct(M)-ct(M')}\\
		& \overset{\left(\ref{eq:ct(M)-ct(M')}.a\right)}{\geq}\left[\begin{array}{c}
			\nabla_{\mathbf{x}}c_{t}\left(\mathbf{x}_{t}^{\left(\mathbf{M}_{2}\right)},\mathbf{u}_{t}^{\left(\mathbf{M}_{2}\right)}\right)\\
			\nabla_{\mathbf{u}}c_{t}\left(\mathbf{x}_{t}^{\left(\mathbf{M}_{2}\right)},\mathbf{u}_{t}^{\left(\mathbf{M}_{2}\right)}\right)
		\end{array}\right]^{T}\left[\begin{array}{c}
			\mathbf{x}_{t}^{\left(\mathbf{M}_{1}\right)}-\mathbf{x}_{t}^{\left(\mathbf{M}_{2}\right)}\\
			\mathbf{u}_{t}^{\left(\mathbf{M}_{1}\right)}-\mathbf{u}_{t}^{\left(\mathbf{M}_{2}\right)}
		\end{array}\right]+\frac{\mu}{2}\left\Vert \begin{array}{c}
			\mathbf{x}_{t}^{\left(\mathbf{M}_{1}\right)}-\mathbf{x}_{t}^{\left(\mathbf{M}_{2}\right)}\\
			\mathbf{u}_{t}^{\left(\mathbf{M}_{1}\right)}-\mathbf{u}_{t}^{\left(\mathbf{M}_{2}\right)}
		\end{array}\right\Vert ^{2},\nonumber 
	\end{align}
	where inequality $\left(\ref{eq:ct(M)-ct(M')}.a\right)$ holds because
	of the strong convexity assumption of $c_{t}$ in Assumption  {A\ref{A4}.}
	
	Based on the representations of the pair $\left(\mathbf{x}_{t},\mathbf{u}_{t}\right)$
	in  {\eqref{eq:DAP u_t^M} and \eqref{eq:x_t representation}}, it follows that
	\begin{align}
		\mathbf{x}_{t}^{\left(\mathbf{M}_{1}\right)}-\mathbf{x}_{t}^{\left(\mathbf{M}_{2}\right)}= & \sum_{i=0}^{t-1}\prod_{j=i+1}^{t-1}\left(\mathbf{1}_{j<t}\left(\widetilde{\mathbf{A}}+\mathbf{\Delta}_{j}\right)+\mathbf{1}_{j=t}\mathbf{I}\right)\mathbf{B}\left(\mathbf{M}_{1}-\mathbf{M}_{2}\right)\left[\mathbf{w}\right]_{i-1}^{H}\label{eq:x(M,W)}\\
		\mathbf{u}_{t}^{\left(\mathbf{M}_{1}\right)}-\mathbf{u}_{t}^{\left(\mathbf{M}_{2}\right)}= & -\mathbf{K}\sum_{i=0}^{t-1}\prod_{j=i+1}^{t-1}\left(\mathbf{1}_{j<t}\left(\widetilde{\mathbf{A}}+\mathbf{\Delta}_{j}\right)+\mathbf{1}_{j=t}\mathbf{I}\right)\mathbf{B}\left(\mathbf{M}_{1}-\mathbf{M}_{2}\right)\left[\mathbf{w}\right]_{i-1}^{H}\label{eq:u(M,W)}\\
		& +\left(\mathbf{M}_{1}-\mathbf{M}_{2}\right)\left[\mathbf{w}\right]_{t-1}^{H}.\nonumber 
	\end{align}
	Therefore, we obtain 
	\begin{align}
		& \left[\begin{array}{c}
			\nabla_{\mathbf{x}}c_{t}\left(\mathbf{x}_{t}^{\left(\mathbf{M}_{2}\right)},\mathbf{u}_{t}^{\left(\mathbf{M}_{2}\right)}\right)\\
			\nabla_{\mathbf{u}}c_{t}\left(\mathbf{x}_{t}^{\left(\mathbf{M}_{2}\right)},\mathbf{u}_{t}^{\left(\mathbf{M}_{2}\right)}\right)
		\end{array}\right]^{T}\left[\begin{array}{c}
			\mathbf{x}_{t}^{\left(\mathbf{M}_{1}\right)}-\mathbf{x}_{t}^{\left(\mathbf{M}_{2}\right)}\\
			\mathbf{u}_{t}^{\left(\mathbf{M}_{1}\right)}-\mathbf{u}_{t}^{\left(\mathbf{M}_{2}\right)}
		\end{array}\right]\label{eq: gradient term 1}\\
		& =\sum_{i=0}^{t-1}\nabla_{\mathbf{x}}^{\top}c_{t}\left(\mathbf{x}_{t}^{\left(\mathbf{M}_{2}\right)},\mathbf{u}_{t}^{\left(\mathbf{M}_{2}\right)}\right)\left(\prod_{j=i+1}^{t-1}\left(\mathbf{1}_{j<t}\left(\widetilde{\mathbf{A}}+\mathbf{\Delta}_{j}\right)+\mathbf{1}_{j=t}\mathbf{I}\right)\mathbf{B}\left(\mathbf{M}_{1}-\mathbf{M}_{2}\right)\left[\mathbf{w}\right]_{i-1}^{H}\right)\nonumber \\
		& +\sum_{i=0}^{t-1}\nabla_{\mathbf{u}}^{\top}c_{t}\left(\mathbf{x}_{t}^{\left(\mathbf{M}_{2}\right)},\mathbf{u}_{t}^{\left(\mathbf{M}_{2}\right)}\right)\left(-\mathbf{K}\sum_{i=0}^{t-1}\prod_{j=i+1}^{t-1}\left(\mathbf{1}_{j<t}\left(\widetilde{\mathbf{A}}+\mathbf{\Delta}_{j}\right)+\mathbf{1}_{j=t}\mathbf{I}\right)\mathbf{B}\left(\mathbf{M}_{1}-\mathbf{M}_{2}\right)\left[\mathbf{w}\right]_{i-1}^{H}\right.\nonumber \\
		& \left.+\left(\mathbf{M}_{1}-\mathbf{M}_{2}\right)\left[\mathbf{w}\right]_{t-1}^{H}\right)\nonumber \\
		& =\text{\ensuremath{\mathrm{Tr}}}\left(\left(\sum_{i=0}^{t-1}\left[\mathbf{w}\right]_{i-1}^{H}\nabla_{\mathbf{x}}^{\top}c_{t}\left(\mathbf{x}_{t}^{\left(\mathbf{M}_{2}\right)},\mathbf{u}_{t}^{\left(\mathbf{M}_{2}\right)}\right)\prod_{j=i+1}^{t-1}\left(\mathbf{1}_{j<t}\left(\widetilde{\mathbf{A}}+\mathbf{\Delta}_{j}\right)+\mathbf{1}_{j=t}\mathbf{I}\right)\mathbf{B}\right)\left(\mathbf{M}_{1}-\mathbf{M}_{2}\right)\right)\nonumber \\
		& +\mathrm{Tr}\left(\left(-\sum_{i=0}^{t-1}\left[\mathbf{w}\right]_{i-1}^{H}\nabla_{\mathbf{u}}^{\top}c_{t}\left(\mathbf{x}_{t}^{\left(\mathbf{M}_{2}\right)},\mathbf{u}_{t}^{\left(\mathbf{M}_{2}\right)}\right)\mathbf{K}\prod_{j=i+1}^{t-1}\left(\mathbf{1}_{j<t}\left(\widetilde{\mathbf{A}}+\mathbf{\Delta}_{j}\right)+\mathbf{1}_{j=t}\mathbf{I}\right)\mathbf{B}\right.\right.\nonumber \\
		& \left.+\left.\left[\mathbf{w}\right]_{t-1}^{H}\nabla_{\mathbf{u}}^{\top}c_{t}\left(\mathbf{x}_{t}^{\left(\mathbf{M}_{2}\right)},\mathbf{u}_{t}^{\left(\mathbf{M}_{2}\right)}\right)\right)\left(\mathbf{M}_{1}-\mathbf{M}_{2}\right)\right)\nonumber \\
		& =\mathrm{Tr}\left(\nabla_{\mathbf{M}_{2}}^{\top}c_{t}\left(\mathbf{x}_{t}^{\left(\mathbf{M}_{2}\right)},\mathbf{u}_{t}^{\left(\mathbf{M}_{2}\right)}\right)\left(\mathbf{M}_{1}-\mathbf{M}_{2}\right)\right).\nonumber
	\end{align}
	
	Substitute (\ref{eq: gradient term 1}) back into (\ref{eq:ct(M)-ct(M')}),
	we have 
	\begin{align}
		c_{t}\left(\mathbf{x}_{t}^{\left(\mathbf{M}_{1}\right)},\mathbf{u}_{t}^{\left(\mathbf{M}_{1}\right)}\right)-c_{t}\left(\mathbf{x}_{t}^{\left(\mathbf{M}_{2}\right)},\mathbf{u}_{t}^{\left(\mathbf{M}_{2}\right)}\right)\geq & \mathrm{Tr}\left(\nabla_{\mathbf{M}_{2}}^{\top}c_{t}\left(\mathbf{x}_{t}^{\left(\mathbf{M}_{2}\right)},\mathbf{u}_{t}^{\left(\mathbf{M}_{2}\right)}\right)\left(\mathbf{M}_{1}-\mathbf{M}_{2}\right)\right)\label{eq:ct-ct fine grained}\\
		& +\frac{\mu}{2}\left(\left\Vert \mathbf{x}_{t}^{\left(\mathbf{M}_{1}\right)}-\mathbf{x}_{t}^{\left(\mathbf{M}_{2}\right)}\right\Vert ^{2}+\left\Vert \mathbf{u}_{t}^{\left(\mathbf{M}_{1}\right)}-\mathbf{u}_{t}^{\left(\mathbf{M}_{2}\right)}\right\Vert ^{2}\right).\nonumber 
	\end{align}
	
	Taking full expectation on both sides of (\ref{eq:ct-ct fine grained}),
	it follows 
	\begin{align}
		& f_{t}\left(\mathbf{M}_{1};\mathbf{M}^{\prime}\right)-f_{t}\left(\mathbf{M}_{1};\mathbf{M}^{\prime}\right)\geq\mathrm{Tr}\left(\nabla^{\top}f_{t}\left(\mathbf{M}_{2};\mathbf{M}^{\prime}\right)\cdot\left(\mathbf{M}_{1}-\mathbf{M}_{2}\right)\right)\label{eq: scvx ft - ft}\\
		& +\frac{\mu}{2}\mathbb{E}_{\mathbf{x}_{0},\left\{ \mathbf{\Delta}_{i}\sim\mathcal{D}_{i}\left(\mathbf{M}^{\prime}\right)\right\} _{0\leq i<t},\left\{ \mathbf{w}_{i}\right\} _{0\leq i<t}}\left[\left\Vert \mathbf{x}_{t}^{\left(\mathbf{M}_{1}\right)}-\mathbf{x}_{t}^{\left(\mathbf{M}_{2}\right)}\right\Vert ^{2}+\left\Vert \mathbf{u}_{t}^{\left(\mathbf{M}_{1}\right)}-\mathbf{u}_{t}^{\left(\mathbf{M}_{2}\right)}\right\Vert ^{2}\right].\nonumber 
	\end{align}
	
	We next analyze the last term in (\ref{eq: scvx ft - ft}). Consider
	the conditional expectation on $\left\{ \mathbf{w}_{i}\right\} _{0\leq i<t}$
	as follows
	\begin{align}
		& \mathbb{E}_{\mathbf{x}_{0},\left\{ \mathbf{\Delta}_{i}\sim\mathcal{D}_{i}\left(\mathbf{M}^{\prime}\right)\right\} _{0\leq i<t},\left\{ \mathbf{w}_{i}\right\} _{0\leq i<t}}\left[\left\Vert \mathbf{x}_{t}^{\left(\mathbf{M}_{1}\right)}-\mathbf{x}_{t}^{\left(\mathbf{M}_{2}\right)}\right\Vert ^{2}+\left\Vert \mathbf{u}_{t}^{\left(\mathbf{M}_{1}\right)}-\mathbf{u}_{t}^{\left(\mathbf{M}_{2}\right)}\right\Vert ^{2}\right]\label{eq:scvx ft - ft -1}\\
		& =\mathbb{E}_{\left\{ \mathbf{w}_{i}\right\} _{0\leq i<t}}\left[\mathbb{E}_{\mathbf{x}_{0},\left\{ \mathbf{\Delta}_{i}\sim\mathcal{D}_{i}\left(\mathbf{M}^{\prime}\right)\right\} _{0\leq i<t}}\left[\left.\left\Vert \mathbf{x}_{t}^{\left(\mathbf{M}_{1}\right)}-\mathbf{x}_{t}^{\left(\mathbf{M}_{2}\right)}\right\Vert ^{2}+\left\Vert \mathbf{u}_{t}^{\left(\mathbf{M}_{1}\right)}-\mathbf{u}_{t}^{\left(\mathbf{M}_{2}\right)}\right\Vert ^{2}\right|\left\{ \mathbf{w}_{i}\right\} _{0\leq i<t}\right]\right]\nonumber \\
		& \geq\mathbb{E}_{\left\{ \mathbf{w}_{i}\right\} _{0\leq i<t}}\left[\left\Vert \mathbb{E}_{\mathbf{x}_{0},\left\{ \mathbf{\Delta}_{i}\sim\mathcal{D}_{i}\left(\mathbf{M}^{\prime}\right)\right\} _{0\leq i<t}}\left[\left.\mathbf{x}_{t}^{\left(\mathbf{M}_{1}\right)}-\mathbf{x}_{t}^{\left(\mathbf{M}_{2}\right)}\right|\left\{ \mathbf{w}_{i}\right\} _{0\leq i<t}\right]\right\Vert ^{2}\right.\nonumber \\
		& +\left.\left\Vert \mathbb{E}_{\mathbf{x}_{0},\left\{ \mathbf{\Delta}_{i}\sim\mathcal{D}_{i}\left(\mathbf{M}^{\prime}\right)\right\} _{0\leq i<t}}\left[\left.\mathbf{u}_{t}^{\left(\mathbf{M}_{1}\right)}-\mathbf{u}_{t}^{\left(\mathbf{M}_{2}\right)}\right|\left\{ \mathbf{w}_{i}\right\} _{0\leq i<t}\right]\right\Vert ^{2}\right]\nonumber \\
		& \overset{\left(\ref{eq:scvx ft - ft -1}.a\right)}{=}\mathbb{E}_{\left\{ \mathbf{w}_{i}\right\} _{0\leq i<t}}\left[\left\Vert \sum_{i=0}^{t-1}\widetilde{\mathbf{A}}^{i}\mathbf{B}\left(\mathbf{M}_{1}-\mathbf{M}_{2}\right)\left[\mathbf{w}\right]_{i-1}^{H}\right\Vert ^{2}\right.\nonumber \\
		& +\left.\left\Vert \mathbf{K}\sum_{i=0}^{t-1}\widetilde{\mathbf{A}}^{i}\mathbf{B}\left(\mathbf{M}_{1}-\mathbf{M}_{2}\right)\left[\mathbf{w}\right]_{i-1}^{H}-\left(\mathbf{M}_{1}-\mathbf{M}_{2}\right)\left[\mathbf{w}\right]_{t-1}^{H}\right\Vert ^{2}\right],\nonumber 
	\end{align}
	where $\left(\ref{eq:scvx ft - ft -1}.a\right)$ holds becasue $\text{\ensuremath{\mathbb{E}\left[\mathbf{\Delta}_{t}\right]}=\ensuremath{\mathbf{0}}},\forall0\leq t<T.$ 
	
	We next consider each term on R.H.S. of $\left(\ref{eq:scvx ft - ft -1}.a\right).$
	We notice that 
	\begin{align}
		& \mathrm{vec}\left(\sum_{i=0}^{t-1}\widetilde{\mathbf{A}}^{i}\mathbf{B}\left(\mathbf{M}_{1}-\mathbf{M}_{2}\right)\left[\mathbf{w}\right]_{i-1}^{H}\right)=\left(\sum_{i=0}^{t-1}\left(\left[\mathbf{w}\right]_{i-1}^{H}\right)^{\top}\otimes\left(\widetilde{\mathbf{A}}^{i}\mathbf{B}\right)\right)\mathrm{vec}\left(\mathbf{M}_{1}-\mathbf{M}_{2}\right)
	\end{align}
	Therefore, 
	\begin{align}
		& \mathbb{E}_{\left\{ \mathbf{w}_{i}\right\} _{0\leq i<t}}\left[\left\Vert \sum_{i=0}^{t-1}\widetilde{\mathbf{A}}^{i}\mathbf{B}\left(\mathbf{M}_{1}-\mathbf{M}_{2}\right)\left[\mathbf{w}\right]_{i-1}^{H}\right\Vert ^{2}\right]\label{eq:scvx final term 1}\\
		& =\delta_{\mathbf{M}}^{\top}\mathbb{E}_{\left\{ \mathbf{w}_{i}\right\} _{0\leq i<t}}\left[\left(\sum_{i=0}^{t-1}\left(\left[\mathbf{w}\right]_{i-1}^{H}\right)^{\top}\otimes\left(\widetilde{\mathbf{A}}^{i}\mathbf{B}\right)\right)^{\top}\left(\sum_{i=0}^{t-1}\left(\left[\mathbf{w}\right]_{i-1}^{H}\right)^{\top}\otimes\left(\widetilde{\mathbf{A}}^{i}\mathbf{B}\right)\right)\right]\delta_{\mathbf{M}}\nonumber \\
		& =\delta_{\mathbf{M}}^{\top}\left(\left(\sum_{i_{1}=1}^{t}\sum_{i_{2}=1}^{t}\left(\Lambda_{i_{1}-i_{2}}\otimes\left(\widetilde{\mathbf{A}}^{i_{1}-1}\mathbf{B}\right)^{\top}\widetilde{\mathbf{A}}^{i_{2}-1}\mathbf{B}\right)\right)\otimes\mathbb{E}\left[\mathbf{w}_{t}\mathbf{w}_{t}^{\top}\right]\right)\delta_{\mathbf{M}}\nonumber \\
		& =\delta_{\mathbf{M}}^{\top}\left(\left(\left(\mathbf{I}_{H}\otimes\mathbf{B}^{\top}\right)\Psi\left(\mathbf{I}_{H}\otimes\mathbf{B}\right)\right)\otimes\mathbb{E}\left[\mathbf{w}_{t}\mathbf{w}_{t}^{\top}\right]\right)\delta_{\mathbf{M}},\nonumber 
	\end{align}
	where $\delta_{\mathbf{M}}=\mathrm{vec}\left(\mathbf{M}_{1}-\mathbf{M}_{2}\right)$,
	$\Lambda_{t}\in\mathbb{R}^{H\times H}$ with $\left[\Lambda_{l}\right]_{i,j}=1$
	if and only if $i-j=l$ and 0 otherwise and $\Psi=\sum_{i_{1}=1}^{t}\sum_{i_{2}=1}^{t}\left(\Lambda_{i_{1}-i_{2}}\otimes\left(\widetilde{\mathbf{A}}^{i_{1}-1}\mathbf{B}\right)^{\top}\widetilde{\mathbf{A}}^{i_{2}-1}\mathbf{B}\right).$
	
	Using similar approach, we can obtain 
	\begin{align}
		& \mathbb{E}_{\left\{ \mathbf{w}_{i}\right\} _{0\leq i<t}}\left[\left\Vert \mathbf{K}\sum_{i=0}^{t-1}\widetilde{\mathbf{A}}^{i}\mathbf{B}\left(\mathbf{M}_{1}-\mathbf{M}_{2}\right)\left[\mathbf{w}\right]_{i-1}^{H}-\left(\mathbf{M}_{1}-\mathbf{M}_{2}\right)\left[\mathbf{w}\right]_{t-1}^{H}\right\Vert ^{2}\right]\label{eq:scvx final term 2}\\
		& =\delta_{\mathbf{M}}^{\top}\left(\left(\left(\mathbf{I}_{H}\otimes\mathbf{B}^{\top}\right)\widetilde{\Psi}\left(\mathbf{I}_{H}\otimes\mathbf{B}^{\top}\right)-\Theta\left(\mathbf{I}_{H}\otimes\mathbf{B}\right)-\left(\mathbf{I}_{H}\otimes\mathbf{B}^{\top}\right)\Theta^{\top}+\mathbf{I}_{Hd_{u}}\right)\otimes\mathbb{E}\left[\mathbf{w}_{t}\mathbf{w}_{t}^{\top}\right]\right)\delta_{\mathbf{M}},\nonumber 
	\end{align}
	where $\widetilde{\Psi}=\sum_{i_{1}=1}^{t}\sum_{i_{2}=1}^{t}\left(\Lambda_{i_{1}-i_{2}}\otimes\left(\mathbf{K}\widetilde{\mathbf{A}}^{i_{1}-1}\mathbf{B}\right)^{\top}\mathbf{K}\widetilde{\mathbf{A}}^{i_{2}-1}\mathbf{B}\right)$
	and $\Theta=\left(\mathbf{I}_{H}\otimes\mathbf{K}\right)\otimes\left(\sum_{i=1}^{t}\Lambda_{-i}\otimes\left(\widetilde{\mathbf{A}}^{i-1}\mathbf{B}\right)\right)$
	
	Substitue (\ref{eq:scvx final term 2}) and (\ref{eq:scvx final term 1})
	back into (\ref{eq:scvx ft - ft -1}), it follows that
	\begin{align}
		& \mathbb{E}_{\mathbf{x}_{0},\left\{ \mathbf{\Delta}_{i}\sim\mathcal{D}_{i}\left(\mathbf{M}^{\prime}\right)\right\} _{0\leq i<t},\left\{ \mathbf{w}_{i}\right\} _{0\leq i<t}}\left[\left\Vert \mathbf{x}_{t}^{\left(\mathbf{M}_{1}\right)}-\mathbf{x}_{t}^{\left(\mathbf{M}_{2}\right)}\right\Vert ^{2}+\left\Vert \mathbf{u}_{t}^{\left(\mathbf{M}_{1}\right)}-\mathbf{u}_{t}^{\left(\mathbf{M}_{2}\right)}\right\Vert ^{2}\right]\label{eq:scvx final term}\\
		& =\delta_{\mathbf{M}}^{\top}\left(\Omega\otimes\mathbb{E}\left[\mathbf{w}_{t}\mathbf{w}_{t}^{\top}\right]\right)\delta_{\mathbf{M}}.\nonumber 
	\end{align}
	where 
	\begin{align}
		\Omega=&\left(\mathbf{I}_{H}\otimes\mathbf{B}^{\top}\right)\Psi\left(\mathbf{I}_{H}\otimes\mathbf{B}\right)+\left(\mathbf{I}_{H}\otimes\mathbf{B}^{\top}\right)\widetilde{\Psi}\left(\mathbf{I}_{H}\otimes\mathbf{B}^{\top}\right)\\
		&-\Theta\left(\mathbf{I}_{H}\otimes\mathbf{B}\right)-\left(\mathbf{I}_{H}\otimes\mathbf{B}^{\top}\right)\Theta^{\top}+\mathbf{I}_{Hd_{u}}.\nonumber
	\end{align}
	
	Based on Lemma F.1 and F.2 in \cite{agarwal2019logarithmic}, we have, $\forall H\leq t\leq T,$
	$\Psi\geq\frac{1}{4\kappa^{4}}\mathbf{I}_{Hd_{x}}$ and $\left\Vert \Theta\right\Vert \leq\gamma^{-1}\kappa^{3}$.
	Therefore, if $\left\Vert \mathbf{I}_{H}\otimes\mathbf{B}\right\Vert \geq\frac{\gamma}{4\kappa^{3}}$,
	then 
	\begin{align}
		& \Omega\geq\frac{1}{4\kappa^{4}}\mathbf{I}_{Hd_{x}}\left(\frac{\gamma}{4\kappa^{3}}\right)^{2}=\frac{\gamma^{2}}{64\kappa^{10}}\mathbf{I}_{Hd_{x}}.\label{eq: Omega term 1}
	\end{align}
	Otherwise, if $\left\Vert \mathbf{I}_{H}\otimes\mathbf{B}\right\Vert <\frac{\gamma}{4\kappa^{3}}$,
	then 
	\begin{align}
		& \Omega\geq\mathbf{I}_{Hd_{u}}-\Theta\left(\mathbf{I}_{H}\otimes\mathbf{B}\right)-\left(\mathbf{I}_{H}\otimes\mathbf{B}^{\top}\right)\Theta^{\top},\\
		& \left\Vert \Omega\right\Vert \geq1-\gamma^{-1}\kappa^{3}\frac{\gamma}{4\kappa^{3}}-\gamma^{-1}\kappa^{3}\frac{\gamma}{4\kappa^{3}}=\frac{1}{2}.\label{eq: Omega term 2}
	\end{align}
	
	Combine (\ref{eq: Omega term 1}) and (\ref{eq: Omega term 2}), we
	have $\left\Vert \Omega\right\Vert \geq\mathrm{min}\left\{ \frac{1}{2},\frac{\gamma^{2}}{64\kappa^{10}}\right\} $.
	As a result, 
	\begin{align}
		& \mathbb{E}_{\mathbf{x}_{0},\left\{ \mathbf{\Delta}_{i}\sim\mathcal{D}_{i}\left(\mathbf{M}^{\prime}\right)\right\} _{0\leq i<t},\left\{ \mathbf{w}_{i}\right\} _{0\leq i<t}}\left[\left\Vert \mathbf{x}_{t}^{\left(\mathbf{M}_{1}\right)}-\mathbf{x}_{t}^{\left(\mathbf{M}_{2}\right)}\right\Vert ^{2}+\left\Vert \mathbf{u}_{t}^{\left(\mathbf{M}_{1}\right)}-\mathbf{u}_{t}^{\left(\mathbf{M}_{2}\right)}\right\Vert ^{2}\right]\label{eq:scvx final term-2}\\
		& =\delta_{\mathbf{M}}^{\top}\left(\Omega\otimes\mathbb{E}\left[\mathbf{w}_{t}\mathbf{w}_{t}^{\top}\right]\right)\delta_{\mathbf{M}}\geq\delta_{\mathbf{M}}\left\Vert \Omega\right\Vert \sigma^{2}\geq\mathrm{min}\left\{ \frac{\sigma^{2}}{2},\frac{\gamma^{2}\sigma^{2}}{64\kappa^{10}}\right\} \left\Vert \mathbf{M}_{1}-\mathbf{M}_{2}\right\Vert _{F}^{2}.\nonumber 
	\end{align}
	
	Substitue (\ref{eq:scvx final term-2}) back into (\ref{eq: scvx ft - ft}),
	it follows that 
	\begin{align}
		& f_{t}\left(\mathbf{M}_{1};\mathbf{M}^{\prime}\right)-f_{t}\left(\mathbf{M}_{1};\mathbf{M}^{\prime}\right)\geq\mathrm{Tr}\left(\nabla^{\top}f_{t}\left(\mathbf{M}_{2};\mathbf{M}^{\prime}\right)\cdot\left(\mathbf{M}_{1}-\mathbf{M}_{2}\right)\right)\label{eq: scvx ft final}\\
		& +\frac{\mathrm{min}\left\{ \frac{\mu\sigma^{2}}{2},\frac{\mu\gamma^{2}\sigma^{2}}{64\kappa^{10}}\right\} }{2}\left\Vert \mathbf{M}_{1}-\mathbf{M}_{2}\right\Vert _{F}^{2}.\nonumber 
	\end{align}
	Therefore, Lemma  {\ref{s_cvx}} is proved.
\end{proof}

\textbf{Properties of Gradient.} The following lemma provides upper
bounds for the norm of gradients $\left\Vert \nabla_{\mathbf{x}}c_{t}\left(\mathbf{x}_{t}^{\left(\mathbf{M}\right)},\mathbf{u}_{t}^{\left(\mathbf{M}\right)}\right)\right\Vert $
and $\left\Vert \nabla_{\mathbf{u}}c_{t}\left(\mathbf{x}_{t}^{\left(\mathbf{M}\right)},\mathbf{u}_{t}^{\left(\mathbf{M}\right)}\right)\right\Vert .$
\begin{lemma}\label{Lemma: gradient ct bound} The gradients of
	the per stage cost $c_{t}\left(\mathbf{x}_{t}^{\left(\mathbf{M}\right)},\mathbf{u}_{t}^{\left(\mathbf{M}\right)}\right),\forall1\leq t\leq T,$
	satifies
	\begin{align}
		& \left\Vert \nabla_{\mathbf{x}}c_{t}\left(\mathbf{x}_{t}^{\left(\mathbf{M}\right)},\mathbf{u}_{t}^{\left(\mathbf{M}\right)}\right)\right\Vert \leq x_{0}\kappa^{2}G\alpha_{t}+\kappa^{2}GW\left(\left\Vert \mathbf{B}\right\Vert HM+1\right)\beta_{t},\label{eq: grad_x ct bound}\\
		& \left\Vert \nabla_{\mathbf{u}}c_{t}\left(\mathbf{x}_{t}^{\left(\mathbf{M}\right)},\mathbf{u}_{t}^{\left(\mathbf{M}\right)}\right)\right\Vert \leq x_{0}\kappa^{3}G\alpha_{t}+\kappa^{3}GW\left(\left\Vert \mathbf{B}\right\Vert HM+1\right)\beta_{t}+GMHW,\label{eq:grad_u ct bound}
	\end{align}
	where $\alpha_{t}=\prod_{i=0}^{t-1}\left(1-\gamma+\kappa^{2}\xi_{i}\right)$
	and $\beta_{t}=\sum_{i=0}^{t-1}\prod_{j=i+1}^{t-1}\left(\mathbf{1}_{j<t}\left(1-\gamma+\kappa^{2}\xi_{j}\right)+\mathbf{1}_{j=t}\right).$
\end{lemma}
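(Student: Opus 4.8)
The plan is to reduce both inequalities to Assumption A\ref{A6} by separately controlling the magnitudes of the state $\mathbf{x}_t^{(\mathbf{M})}$ and the action $\mathbf{u}_t^{(\mathbf{M})}$. Reading A\ref{A6} argument-wise (taking the radius $D$ to be the norm of the relevant argument), it yields $\|\nabla_{\mathbf{x}} c_t(\mathbf{x}_t^{(\mathbf{M})},\mathbf{u}_t^{(\mathbf{M})})\| \le G\|\mathbf{x}_t^{(\mathbf{M})}\|$ and $\|\nabla_{\mathbf{u}} c_t(\mathbf{x}_t^{(\mathbf{M})},\mathbf{u}_t^{(\mathbf{M})})\| \le G\|\mathbf{u}_t^{(\mathbf{M})}\|$. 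Thus, once I have norm bounds on the state and on the action, the two gradient bounds follow immediately upon multiplication by $G$.

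For the state, I would simply invoke the bound already established in Lemma \ref{Lemma: x_t representation M}, namely $\|\mathbf{x}_t^{(\mathbf{M})}\| \le x_0 \kappa^2 \alpha_t + \kappa^2 W(\|\mathbf{B}\| HM + 1)\beta_t$, with $\alpha_t$ and $\beta_t$ as defined there. Multiplying by $G$ reproduces \eqref{eq: grad_x ct bound} verbatim, so the $\mathbf{x}$-gradient case requires no further work.

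For the action, I would start from the disturbance-action law $\mathbf{u}_t^{(\mathbf{M})} = -\mathbf{K}\mathbf{x}_t^{(\mathbf{M})} + \mathbf{M}[\mathbf{w}]_{t-1}^H$ and apply the triangle inequality $\|\mathbf{u}_t^{(\mathbf{M})}\| \le \|\mathbf{K}\|\,\|\mathbf{x}_t^{(\mathbf{M})}\| + \|\mathbf{M}[\mathbf{w}]_{t-1}^H\|$. The first term is handled with $\|\mathbf{K}\| \le \kappa$ and the Lemma \ref{Lemma: x_t representation M} bound, which promotes the $\kappa^2$ prefactors to $\kappa^3$. For the second term I would use the block decomposition $\mathbf{M}[\mathbf{w}]_{t-1}^H = \sum_{i=1}^H \mathbf{M}^{(i)}\mathbf{w}_{t-i}$ and a block-wise triangle inequality, bounding each $\|\mathbf{M}^{(i)}\| \le \|\mathbf{M}\|_F \le M$ and each $\|\mathbf{w}_{t-i}\| \le W$ (Assumption A\ref{Assumption noises wt}), which gives $\|\mathbf{M}[\mathbf{w}]_{t-1}^H\| \le MHW$. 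Combining these yields $\|\mathbf{u}_t^{(\mathbf{M})}\| \le x_0\kappa^3\alpha_t + \kappa^3 W(\|\mathbf{B}\|HM+1)\beta_t + MHW$, and multiplying by $G$ produces \eqref{eq:grad_u ct bound}.

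This lemma is essentially mechanical, so I do not anticipate a conceptual obstacle; the only points demanding care are (i) reading A\ref{A6} as an argument-wise bound, so that the $\mathbf{x}$-gradient inherits only the state-norm bound — yielding the tighter $\kappa^2$ prefactor rather than the larger action-norm bound — and (ii) using the crude block-wise estimate $\|\mathbf{M}[\mathbf{w}]_{t-1}^H\| \le MHW$ instead of the Cauchy--Schwarz bound $\|\mathbf{M}\|_F \sqrt{H} W$, since it is the former that matches the advertised constant $GMHW$. Everything else is a direct substitution of the $\alpha_t$ and $\beta_t$ quantities carried over from Lemma \ref{Lemma: x_t representation M}.
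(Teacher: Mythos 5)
Your proposal is correct and follows essentially the same route as the paper's proof: bound $\|\mathbf{x}_t^{(\mathbf{M})}\|$ via the representation in Lemma \ref{Lemma: x_t representation M}, bound $\|\mathbf{u}_t^{(\mathbf{M})}\|\le\kappa\|\mathbf{x}_t^{(\mathbf{M})}\|+MHW$ from the disturbance-action law, and then invoke Assumption A\ref{A6}. The paper likewise applies A\ref{A6} argument-wise (its $\nabla_{\mathbf{x}}$ bound carries the $\kappa^2$ prefactor of the state bound rather than the larger action bound), so the one interpretive liberty you flag is exactly the one the paper itself takes.
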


\begin{proof}
	We first provide upper bounds for $\left\Vert \mathbf{x}_{t}\right\Vert $
	and $\left\Vert \mathbf{u}_{t}\right\Vert .$ From the evolution of
	$\mathbf{x}_{t}$ in \eqref{eq:x_t representation}, we obtain
	\begin{align}
		& \left\Vert \mathbf{x}_{t}\right\Vert \leq\left\Vert \prod_{i=0}^{t-1}\left(\widetilde{\mathbf{A}}+\mathbf{\Delta}_{i}\right)\mathbf{x}_{0}\right\Vert +\left\Vert \sum_{i=0}^{t-1}\prod_{j=i+1}^{t-1}\left(\mathbf{1}_{j<t}\left(\widetilde{\mathbf{A}}+\mathbf{\Delta}_{j}\right)+\mathbf{1}_{j=t}\mathbf{I}\right)\mathbf{B}\mathbf{M}\left[\mathbf{w}\right]_{i-1}^{H}\right\Vert \label{eq: norm x-t bound}\\
		& +\left\Vert \sum_{i=0}^{t-1}\prod_{j=i+1}^{t-1}\left(\mathbf{1}_{j<t}\left(\widetilde{\mathbf{A}}+\mathbf{\Delta}_{j}\right)+\mathbf{1}_{j=t}\mathbf{I}\right)\mathbf{w}_{i}\right\Vert \leq x_{0}\kappa^{2}\prod_{i=0}^{t-1}\left(1-\gamma+\kappa^{2}\left\Vert \mathbf{\Delta}_{i}\right\Vert \right)\nonumber \\
		& +\kappa^{2}W\left(\left\Vert \mathbf{B}\right\Vert HM+1\right)\sum_{i=0}^{t-1}\prod_{j=i+1}^{t-1}\left\Vert \mathbf{1}_{j<t}\left(1-\gamma+\kappa^{2}\left\Vert \mathbf{\Delta}_{j}\right\Vert \right)+\mathbf{1}_{j=t}\mathbf{I}\right\Vert ,\nonumber \\
		& \left\Vert \mathbf{u}_{t}\right\Vert \leq\kappa\left\Vert \mathbf{x}_{t}\right\Vert +MHW.\label{eq:norm u-t bound}
	\end{align}
	
	Lemma \ref{Lemma: gradient ct bound} follows by noting that $\left\Vert \mathbf{\Delta}_{t}\right\Vert \leq\xi_{t},\forall0\leq t\leq T,\forall\mathbf{M}\in\mathcal{\mathbb{M}},$
	and whenever $\left\Vert \mathbf{x}_{t}\right\Vert ,\left\Vert \mathbf{u}_{t}\right\Vert \leq D$,
	we have $\left\Vert \nabla_{\mathbf{x}}c_{t}\left(\mathbf{x}_{t}^{\left(\mathbf{M}\right)},\mathbf{u}_{t}^{\left(\mathbf{M}\right)}\right)\right\Vert ,\left\Vert \nabla_{\mathbf{u}}c_{t}\left(\mathbf{x}_{t}^{\left(\mathbf{M}\right)},\mathbf{u}_{t}^{\left(\mathbf{M}\right)}\right)\right\Vert \leq GD.$ 
\end{proof}

We next prove that the variance of the gradient $\nabla_{\mathbf{M}}c_{t}\left(\mathbf{x}_{t}^{\left(\mathbf{M}\right)},\mathbf{u}_{t}^{\left(\mathbf{M}\right)}\right)$
is bounded.
\begin{lemma} \label{lemma Gradient-Variance per stage}
	There exists a $\vartheta_{t}>0,\forall1\leq t\leq T,$ such that
	\begin{align}
		& \mathbb{E}_{\mathbf{x}_{0},\left\{ \mathbf{\Delta}_{i}\sim\mathcal{D}_{i}\left(\mathbf{M}\right)\right\} _{0\leq i<t},\left\{ \mathbf{w}_{i}\right\} _{0\leq i<t}}\left[\left\Vert \nabla_{\mathbf{M}}c_{t}\left(\mathbf{x}_{t}^{\left(\mathbf{M}\right)},\mathbf{u}_{t}^{\left(\mathbf{M}\right)}\right)\right.\right\Vert \label{eq:var-grad-M}\\
		& -\left.\left.\mathbb{E}_{\mathbf{x}_{0},\left\{ \mathbf{\Delta}_{i}\sim\mathcal{D}_{i}\left(\mathbf{M}\right)\right\} _{0\leq i<t},\left\{ \mathbf{w}_{i}\right\} _{0\leq i<t}}\left[\nabla_{\mathbf{M}}c_{t}\left(\mathbf{x}_{t}^{\left(\mathbf{M}\right)},\mathbf{u}_{t}^{\left(\mathbf{M}\right)}\right)\right]\right\Vert _{F}^{2}\right]\leq\vartheta_{t}.\nonumber
	\end{align}
\end{lemma}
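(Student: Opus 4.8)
The plan is to reduce the variance bound to a \emph{deterministic}, almost-sure bound on the Frobenius norm of the gradient. For any random matrix $X$ one has $\mathbb{E}\bigl[\|X-\mathbb{E}[X]\|_F^2\bigr]=\mathbb{E}\|X\|_F^2-\|\mathbb{E}[X]\|_F^2\le\mathbb{E}\|X\|_F^2$, so with $X=\nabla_{\mathbf{M}}c_t(\mathbf{x}_t^{(\mathbf{M})},\mathbf{u}_t^{(\mathbf{M})})$ it suffices to bound $\|\nabla_{\mathbf{M}}c_t\|_F$ uniformly over every realization of $\mathbf{x}_0$, $\{\mathbf{\Delta}_i\}$ and $\{\mathbf{w}_i\}$. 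Such a uniform bound exists precisely because all of the underlying randomness has bounded support: $\|\mathbf{x}_0\|\le x_0$, $\|\mathbf{\Delta}_i\|\le\xi_i$ by A\ref{A2}, and $\|\mathbf{w}_i\|\le W$ by A\ref{Assumption noises wt}. No concentration or higher-moment argument is needed; boundedness of the supports does all the work, and the left-hand side of \eqref{eq:var-grad-M} is then dominated by the squared norm bound.

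First I would start from the closed-form expression \eqref{eq: grad M ct}, which writes $\nabla_{\mathbf{M}}c_t$ as a sum of three terms, and bound each Frobenius norm by submultiplicativity, noting that an outer product $\mathbf{a}\,\mathbf{b}^{\top}$ has $\|\mathbf{a}\,\mathbf{b}^{\top}\|_F=\|\mathbf{a}\|\,\|\mathbf{b}\|$. The key building blocks are: (i) the operator-norm bound on the perturbed transition products, $\bigl\|\prod_{j=i+1}^{t-1}\bigl(\mathbf{1}_{j<t}(\widetilde{\mathbf{A}}+\mathbf{\Delta}_j)+\mathbf{1}_{j=t}\mathbf{I}\bigr)\bigr\|\le\kappa^2\prod_{j=i+1}^{t-1}\bigl(\mathbf{1}_{j<t}(1-\gamma+\kappa^2\xi_j)+\mathbf{1}_{j=t}\bigr)$, obtained from the decomposition $\widetilde{\mathbf{A}}=\mathbf{Q}\mathbf{L}\mathbf{Q}^{-1}$ exactly as in Lemma \ref{Lemma: x_t representation M}, whose summation over $i$ produces the factor $\beta_t$; (ii) the windowed-disturbance bound $\|[\mathbf{w}]_{i-1}^H\|\le\sqrt{H}\,W$; (iii) $\|\mathbf{K}\|\le\kappa$; and (iv) the almost-sure per-stage gradient bounds $\|\nabla_{\mathbf{x}}c_t\|\le\kappa^2G(x_0\alpha_t+c_3\beta_t)$ and $\|\nabla_{\mathbf{u}}c_t\|\le\kappa^3G(x_0\alpha_t+c_3\beta_t)+GMHW$ supplied by Lemma \ref{Lemma: gradient ct bound}. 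Substituting (i)--(iv) into the three terms and collecting constants yields a deterministic bound on $\|\nabla_{\mathbf{M}}c_t\|_F$ of exactly the form of the constant $\vartheta_t=\kappa^{3}G\bigl((HW+\kappa^{2})\kappa\|\mathbf{B}\|\beta_{t}+1\bigr)(x_{0}\alpha_{t}+c_{3}\beta_{t})+GHWM(\kappa^{3}\beta_{t}+1)$ appearing in Theorem \ref{Thm: Convergence RSGD}.

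Because this bound is pointwise in the randomness, it immediately controls $\mathbb{E}\|\nabla_{\mathbf{M}}c_t\|_F^2$ and hence the variance on the left-hand side of \eqref{eq:var-grad-M}, establishing the existence of the claimed $\vartheta_t$. The only genuinely delicate step is the bookkeeping of the propagation factors so that the aggregated coefficients match the claimed constant: one must keep separate the contribution of the first two terms, which carry the propagation product $\prod_{j>i}(\widetilde{\mathbf{A}}+\mathbf{\Delta}_j)\mathbf{B}$ together with the $\beta_t$ weight and the bounds on $\|\nabla_{\mathbf{x}}c_t\|,\|\nabla_{\mathbf{u}}c_t\|$, from the last, ungated term $\nabla_{\mathbf{u}}c_t([\mathbf{w}]_{t-1}^H)^{\top}$, which is responsible for the residual $GHWM$ piece of $\vartheta_t$. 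Conceptually, however, the argument is routine once the strong-stability operator-norm estimate and the gradient bounds of Lemma \ref{Lemma: gradient ct bound} are invoked.
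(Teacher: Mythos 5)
Your proposal is correct and follows essentially the same route as the paper: reduce the variance to the second moment via $\mathbb{E}\|X-\mathbb{E}[X]\|_F^2\le\mathbb{E}\|X\|_F^2$, then bound $\nabla_{\mathbf{M}}c_t$ pointwise from the closed form \eqref{eq: grad M ct} using the strong-stability operator-norm estimates, $\|[\mathbf{w}]_{i-1}^H\|\le\sqrt{H}W$, and the gradient bounds of Lemma \ref{Lemma: gradient ct bound}. The only cosmetic difference is that you bound the Frobenius norm of each outer product directly via $\|\mathbf{a}\mathbf{b}^{\top}\|_F=\|\mathbf{a}\|\|\mathbf{b}\|$, whereas the paper bounds the spectral norm first and pays a dimension factor $d_x^2H$ in converting to the Frobenius norm, which is why its $\vartheta_t$ carries that extra factor; since the lemma only asserts existence of some $\vartheta_t>0$, this does not affect correctness.
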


\begin{proof}
	Based on the expression of $\nabla_{\mathbf{M}}c_{t}\left(\mathbf{x}_{t}^{\left(\mathbf{M}\right)},\mathbf{u}_{t}^{\left(\mathbf{M}\right)}\right)$
	in (\ref{eq: grad M ct}), we have 
	
	\begin{align}
		& \left\Vert \nabla_{\mathbf{M}}c_{t}\left(\mathbf{x}_{t}^{\left(\mathbf{M}\right)},\mathbf{u}_{t}^{\left(\mathbf{M}\right)}\right)\right\Vert\label{eq:grad norm bound}\\
		&\leq\left\Vert \mathbf{B}\right\Vert HW\sum_{i=0}^{t-1}\left\Vert \prod_{j=i+1}^{t-1}\left(\mathbf{1}_{j<t}\left(\widetilde{\mathbf{A}}+\mathbf{\Delta}_{j}\right)+\mathbf{1}_{j=t}\mathbf{I}\right)\right\Vert \left\Vert \nabla_{\mathbf{x}}c_{t}\left(\mathbf{x}_{t}^{\left(\mathbf{M}\right)},\mathbf{u}_{t}^{\left(\mathbf{M}\right)}\right)\right\Vert \nonumber\\
		& +\kappa\left\Vert \mathbf{B}\right\Vert HW\sum_{i=0}^{t-1}\left\Vert \prod_{j=i+1}^{t-1}\left(\mathbf{1}_{j<t}\left(\widetilde{\mathbf{A}}+\mathbf{\Delta}_{j}\right)+\mathbf{1}_{j=t}\mathbf{I}\right)\right\Vert \left\Vert \nabla_{\mathbf{u}}c_{t}\left(\mathbf{x}_{t}^{\left(\mathbf{M}\right)},\mathbf{u}_{t}^{\left(\mathbf{M}\right)}\right)\right\Vert \nonumber \\
		& +HW\left\Vert \nabla_{\mathbf{u}}c_{t}\left(\mathbf{x}_{t}^{\left(\mathbf{M}\right)},\mathbf{u}_{t}^{\left(\mathbf{M}\right)}\right)\right\Vert \nonumber \\
		& \leq\kappa^{2}\left\Vert \mathbf{B}\right\Vert HW\beta_{t}\left\Vert \nabla_{\mathbf{x}}c_{t}\left(\mathbf{x}_{t}^{\left(\mathbf{M}\right)},\mathbf{u}_{t}^{\left(\mathbf{M}\right)}\right)\right\Vert +\left(\kappa^{3}\left\Vert \mathbf{B}\right\Vert \beta_{t}+1\right)HW\left\Vert \nabla_{\mathbf{u}}c_{t}\left(\mathbf{x}_{t}^{\left(\mathbf{M}\right)},\mathbf{u}_{t}^{\left(\mathbf{M}\right)}\right)\right\Vert \nonumber \\
		& \overset{\left(\ref{eq:grad norm bound}.a\right)}{\leq}\left(\kappa\left\Vert \mathbf{B}\right\Vert HW\beta_{t}+\kappa^{3}\left\Vert \mathbf{B}\right\Vert \beta_{t}+1\right)\left(x_{0}\kappa^{3}G\alpha_{t}+\kappa^{3}GW\left(\left\Vert \mathbf{B}\right\Vert HM+1\right)\beta_{t}\right)\nonumber \\
		& +\left(\kappa^{3}\left\Vert \mathbf{B}\right\Vert \beta_{t}+1\right)GMHW,\nonumber 
	\end{align}
	where inequality $\left(\ref{eq:grad norm bound}.a\right)$ holds
	becasue of the gradient norm bounds in Lemma \ref{Lemma: gradient ct bound}.
	
	Further note that 
	\begin{align}
		& \mathbb{E}\left[\left\Vert \nabla_{\mathbf{M}}c_{t}\left(\mathbf{x}_{t}^{\left(\mathbf{M}\right)},\mathbf{u}_{t}^{\left(\mathbf{M}\right)}\right)-\mathbb{E}\left[\nabla_{\mathbf{M}}c_{t}\left(\mathbf{x}_{t}^{\left(\mathbf{M}\right)},\mathbf{u}_{t}^{\left(\mathbf{M}\right)}\right)\right]\right\Vert _{F}^{2}\right]\leq\mathbb{E}\left[\left\Vert \nabla_{\mathbf{M}}c_{t}\left(\mathbf{x}_{t}^{\left(\mathbf{M}\right)},\mathbf{u}_{t}^{\left(\mathbf{M}\right)}\right)\right\Vert _{F}^{2}\right]\label{eq: grad var bound}\\
		& \leq d_{x}^{2}H\mathbb{E}\left[\left\Vert \nabla_{\mathbf{M}}c_{t}\left(\mathbf{x}_{t}^{\left(\mathbf{M}\right)},\mathbf{u}_{t}^{\left(\mathbf{M}\right)}\right)\right\Vert ^{2}\right]\nonumber 
	\end{align}
	
	The desired result follows by substituting (\ref{eq:grad norm bound})
	into (\ref{eq: grad var bound}) and letting 
	\begin{align}
		& \vartheta_{t}=d_{x}^{2}H\left(\left(\kappa\left\Vert \mathbf{B}\right\Vert HW\beta_{t}+\kappa^{3}\left\Vert \mathbf{B}\right\Vert \beta_{t}+1\right)\left(x_{0}\kappa^{3}G\alpha_{t}+\kappa^{3}GW\left(\left\Vert \mathbf{B}\right\Vert HM+1\right)\beta_{t}\right)\right.\\
		& \left.\left.+\left(\kappa^{3}\left\Vert \mathbf{B}\right\Vert \beta_{t}+1\right)GMHW\right)\right).\nonumber
	\end{align}
\end{proof}

The next lemma characterizes the difference between the gradients
under any two different polices $\mathbf{M}$ and $\mathbf{M}^{\prime}$.\medskip{}

\begin{lemma}\label{Lemma: per stage gradient smooth}For
	any given two policies $\mathbf{M},\mathbf{M}^{\prime}\in\mathcal{\mathbb{M}}$,
	the differences between the gradients satify, $\forall1\leq t\leq T,$
	\begin{align}
		& \left\Vert \nabla_{\mathbf{x}}c_{t}\left(\mathbf{x}_{t}^{\left(\mathbf{M}\right)},\mathbf{u}_{t}^{\left(\mathbf{M}\right)}\right)-\nabla_{\mathbf{x}}c_{t}\left(\mathbf{x}_{t}^{\left(\mathbf{M}^{\prime}\right)},\mathbf{u}_{t}^{\left(\mathbf{M}^{\prime}\right)}\right)\right\Vert \nonumber\\
		&+\left\Vert \nabla_{\mathbf{u}}c_{t}\left(\mathbf{x}_{t}^{\left(\mathbf{M}\right)},\mathbf{u}_{t}^{\left(\mathbf{M}\right)}\right)-\nabla_{\mathbf{u}}c_{t}\left(\mathbf{x}_{t}^{\left(\mathbf{M}^{\prime}\right)},\mathbf{u}_{t}^{\left(\mathbf{M}^{\prime}\right)}\right)\right\Vert \\
		& \leq\varsigma\left(\left(1+\kappa\right)\kappa^{2}HW\beta_{t}+HW\right)\left\Vert \text{\ensuremath{\mathbf{M}}}-\mathbf{M}^{\prime}\right\Vert \nonumber \\
		& +\varsigma\left(1+\kappa\right)\kappa^{2}\left(1-\gamma\right)^{-1}\left(x_{0}\alpha_{t}+\left(\left\Vert \mathbf{B}\right\Vert HM+1\right)W\beta_{t}\right)\sum_{i=0}^{t-1}\left\Vert \mathbf{\Delta}_{i}-\mathbf{\Delta}_{i}^{\prime}\right\Vert ,\nonumber 
	\end{align}
	where $\mathbf{\Delta}_{i}$ and $\mathbf{\Delta}_{i}^{\prime},\forall0\leq i<t,$
	denotes the policy-dependent state transition perturbations under
	the policy $\mathbf{M}$ and $\mathbf{M}^{\prime},$ respectively.
\end{lemma}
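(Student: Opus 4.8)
The plan is to reduce the gradient differences to trajectory differences and then to control the latter through the closed-form state representation. First I would invoke the $\varsigma$-smoothness of the per stage cost (Assumption~A\ref{A5}): by adding and subtracting an intermediate argument so that only one of $\mathbf{x},\mathbf{u}$ varies at a time, the left-hand side is bounded by $\varsigma\bigl(\|\mathbf{x}_t^{(\mathbf{M})}-\mathbf{x}_t^{(\mathbf{M}')}\|+\|\mathbf{u}_t^{(\mathbf{M})}-\mathbf{u}_t^{(\mathbf{M}')}\|\bigr)$. This turns the problem into bounding how sensitively the state and action at time $t$ depend on the policy.

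Second, I would expand $\mathbf{x}_t^{(\mathbf{M})}-\mathbf{x}_t^{(\mathbf{M}')}$ using \eqref{eq:x_t representation}. The policy enters in two ways: explicitly through the term $\mathbf{B}\mathbf{M}[\mathbf{w}]_{i-1}^H$, and implicitly through the perturbations $\mathbf{\Delta}_i$ versus $\mathbf{\Delta}_i'$ inside the matrix products. An add-and-subtract step separates these: holding the perturbations at $\mathbf{\Delta}_i'$ isolates a term proportional to $\|\mathbf{M}-\mathbf{M}'\|$, while holding the policy at $\mathbf{M}$ isolates a term driven by $\sum_i\|\mathbf{\Delta}_i-\mathbf{\Delta}_i'\|$. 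The action difference is then handled via $\mathbf{u}_t=-\mathbf{K}\mathbf{x}_t+\mathbf{M}[\mathbf{w}]_{t-1}^H$: the feedback part contributes $\|\mathbf{K}\|\le\kappa$ times the state difference, which is what ultimately produces the factor $(1+\kappa)$, and the direct part $(\mathbf{M}-\mathbf{M}')[\mathbf{w}]_{t-1}^H$ contributes the stand-alone summand $HW\|\mathbf{M}-\mathbf{M}'\|$.

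The crux is the implicit part, which requires bounding the difference of matrix products $\prod_{j=i+1}^{t-1}(\widetilde{\mathbf{A}}+\mathbf{\Delta}_j)-\prod_{j=i+1}^{t-1}(\widetilde{\mathbf{A}}+\mathbf{\Delta}_j')$. I would use the telescoping identity
\begin{equation*}
\prod_j P_j-\prod_j P_j'=\sum_k\Bigl(\prod_{j<k}P_j\Bigr)(P_k-P_k')\Bigl(\prod_{j>k}P_j'\Bigr),
\end{equation*}
with $P_j=\widetilde{\mathbf{A}}+\mathbf{\Delta}_j$, so that $P_k-P_k'=\mathbf{\Delta}_k-\mathbf{\Delta}_k'$. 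Each partial product is bounded through the strongly stabilizing factorization $\widetilde{\mathbf{A}}=\mathbf{Q}\mathbf{L}\mathbf{Q}^{-1}$, giving $\bigl\|\prod_{j=a}^{b}(\widetilde{\mathbf{A}}+\mathbf{\Delta}_j)\bigr\|\le\kappa^2\prod_{j=a}^{b}(1-\gamma+\kappa^2\xi_j)$. The useful observation is that the product of the two partial products is the full window product with the $k$-th factor removed; since $1-\gamma+\kappa^2\xi_k\ge 1-\gamma$, replacing that missing factor by its lower bound yields exactly the $(1-\gamma)^{-1}$ prefactor, while the remaining factors reassemble into $\alpha_t$ (for the $\mathbf{x}_0$ term) or the sub-window products that sum to $\beta_t$ (for the control and noise terms).

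Finally I would collect the three trajectory contributions — the $\mathbf{x}_0$ term scaled by $x_0\alpha_t$, the control term scaled by $(\|\mathbf{B}\|HM+1)W$ through $\beta_t$, and the noise term — multiply by $\varsigma$ and by the $(1+\kappa)$ coming from the action, and rearrange into the two stated summands. The main obstacle I anticipate is the bookkeeping in the telescoping step: the nested summation over the outer index $i$ and the telescoping index $k$ must be reorganized so that the partial products aggregate cleanly into $\alpha_t$ and $\beta_t$, and the powers of $\kappa$ together with the geometric cancellation yielding $(1-\gamma)^{-1}$ must be tracked consistently and uniformly across the $\mathbf{x}_0$, control, and noise contributions.
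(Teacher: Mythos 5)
Your proposal is correct and follows essentially the same route as the paper: reduce the gradient gap to $\varsigma\bigl(\|\mathbf{x}_t^{(\mathbf{M})}-\mathbf{x}_t^{(\mathbf{M}')}\|+\|\mathbf{u}_t^{(\mathbf{M})}-\mathbf{u}_t^{(\mathbf{M}')}\|\bigr)$ via A5, split the state difference into the $\mathbf{x}_0$-propagation, control, and noise pieces (the paper's $E_1,E_2,E_3$), telescope the difference of matrix products so each term carries one factor $\mathbf{\Delta}_k-\mathbf{\Delta}_k'$, absorb the missing factor via $1-\gamma+\kappa^2\xi_k\ge 1-\gamma$ to produce the $(1-\gamma)^{-1}$ and the $\alpha_t,\beta_t$ aggregates, and finally pass to the action through $\mathbf{u}_t=-\mathbf{K}\mathbf{x}_t+\mathbf{M}[\mathbf{w}]_{t-1}^H$ to pick up the $(1+\kappa)$ and the standalone $HW\|\mathbf{M}-\mathbf{M}'\|$ term. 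The bookkeeping concern you flag (reassembling the two partial products around the removed factor with a single $\kappa^2$) is exactly the step the paper carries out in its bounds for $E_1$--$E_3$.
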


\begin{proof}
	Recall the smoothness of the cost function $c_{t}\left(\mathbf{x},\mathbf{u}\right)$
	in  {A \ref{A5}}, i.e.,
	\begin{align}
		& \left\Vert \nabla_{\mathbf{x}}c_{t}\left(\mathbf{x}_{1},\mathbf{u}_{1}\right)-\nabla_{\mathbf{x}}c_{t}\left(\mathbf{x}_{2},\mathbf{u}_{2}\right)\right\Vert +\left\Vert \nabla_{\mathbf{u}}c_{t}\left(\mathbf{x}_{1},\mathbf{u}_{1}\right)-\nabla_{\mathbf{u}}c_{t}\left(\mathbf{x}_{2},\mathbf{u}_{2}\right)\right\Vert \label{eq:grad ct-1}\\
		& \leq\varsigma\left(\left\Vert \mathbf{x}_{1}-\mathbf{x}_{2}\right\Vert +\left\Vert \mathbf{u}_{1}-\mathbf{u}_{2}\right\Vert \right).\nonumber 
	\end{align}
	To prove Lemma \ref{Lemma: per stage gradient smooth}, it suffices
	to quantify $\left\Vert \mathbf{x}_{t}^{\left(\mathbf{M}\right)}-\mathbf{x}_{t}^{\left(\mathbf{M}^{\prime}\right)}\right\Vert $
	and $\left\Vert \mathbf{u}_{t}^{\left(\mathbf{M}\right)}-\mathbf{u}_{t}^{\left(\mathbf{M}^{\prime}\right)}\right\Vert $. 
	
	Based on the evolution of the system state $\mathbf{x}_{t}^{\left(\mathbf{M}\right)}$,
	we have 
	\begin{align}
		& \mathbf{x}_{t}^{\left(\mathbf{M}\right)}-\mathbf{x}_{t}^{\left(\mathbf{M}^{\prime}\right)}=E_{1}+E_{2}+E_{3},\label{eq: state diff M M'}
	\end{align}
	where
	\begin{align}
		E_{1}= & \left(\prod_{i=0}^{t-1}\left(\widetilde{\mathbf{A}}+\mathbf{\Delta}_{i}\right)-\prod_{i=0}^{t-1}\left(\widetilde{\mathbf{A}}+\mathbf{\Delta}_{i}^{\prime}\right)\right)\mathbf{x}_{0},\\
		E_{2}= & \sum_{i=0}^{t-1}\prod_{j=i+1}^{t-1}\left(\mathbf{1}_{j<t}\left(\widetilde{\mathbf{A}}+\mathbf{\Delta}_{j}\right)+\mathbf{1}_{j=t}\mathbf{I}\right)\mathbf{B}\mathbf{M}\left[\mathbf{w}\right]_{i-1}^{H}\\
		& -\sum_{i=0}^{t-1}\prod_{j=i+1}^{t-1}\left(\mathbf{1}_{j<t}\left(\widetilde{\mathbf{A}}+\mathbf{\Delta}_{j}^{\prime}\right)+\mathbf{1}_{j=t}\mathbf{I}\right)\mathbf{B}\mathbf{M}^{\prime}\left[\mathbf{w}\right]_{i-1}^{H},\nonumber \\
		E_{3}= & \sum_{i=0}^{t-1}\left(\prod_{j=i+1}^{t-1}\left(\mathbf{1}_{j<t}\left(\widetilde{\mathbf{A}}+\mathbf{\Delta}_{j}\right)+\mathbf{1}_{j=t}\mathbf{I}\right)-\prod_{j=i+1}^{t-1}\left(\mathbf{1}_{j<t}\left(\widetilde{\mathbf{A}}+\mathbf{\Delta}_{j}^{\prime}\right)+\mathbf{1}_{j=t}\mathbf{I}\right)\right)\mathbf{w}_{i}.
	\end{align}
	
	We next analyze each term in (\ref{eq: state diff M M'}) one by one.
	Specifically,
	\begin{align}
		\left\Vert E_{1}\right\Vert  & \leq x_{0}\left\Vert \prod_{i=0}^{t-1}\left(\widetilde{\mathbf{A}}+\mathbf{\Delta}_{i}\right)-\prod_{i=0}^{t-1}\left(\widetilde{\mathbf{A}}+\mathbf{\Delta}_{i}^{\prime}\right)\right\Vert \label{eq:F1 bound}\\
		& \leq x_{0}\left\Vert \left(\mathbf{\Delta}_{0}-\mathbf{\Delta}_{0}^{\prime}\right)\prod_{i=1}^{t-1}\left(\widetilde{\mathbf{A}}+\mathbf{\Delta}_{i}\right)\right.+\left(\widetilde{\mathbf{A}}+\mathbf{\Delta}_{0}^{\prime}\right)\left(\mathbf{\Delta}_{1}-\mathbf{\Delta}_{1}^{\prime}\right)\prod_{i=2}^{t-1}\left(\widetilde{\mathbf{A}}+\mathbf{\Delta}_{i}\right)\nonumber \\
		& +\cdots+\left.\prod_{i=0}^{t-2}\left(\widetilde{\mathbf{A}}+\mathbf{\Delta}_{i}^{\prime}\right)\left(\mathbf{\Delta}_{t-1}-\mathbf{\Delta}_{t-1}^{\prime}\right)\right\Vert \nonumber \\
		& \leq x_{0}\kappa^{2}\left(1-\gamma\right)^{-1}\prod_{i=0}^{t-1}\left(1-\gamma+\kappa^{2}\xi_{i}\right)\sum_{i=0}^{t-1}\left\Vert \mathbf{\Delta}_{i}-\mathbf{\Delta}_{i}^{\prime}\right\Vert \nonumber \\
		& =x_{0}\kappa^{2}\left(1-\gamma\right)^{-1}\alpha_{t}\sum_{i=0}^{t-1}\left\Vert \mathbf{\Delta}_{i}-\mathbf{\Delta}_{i}^{\prime}\right\Vert .\nonumber 
	\end{align}
	
	For the term $E_{2}$, we use the similar approach for analyzing $E_{1}$
	in (\ref{eq:delta-grad-M-part1}) and obtain the following bound
	\begin{align}
		& \left\Vert E_{2}\right\Vert \label{eq: F2 bound}\\
		& \leq\left\Vert \sum_{i=0}^{t-1}\prod_{j=i+1}^{t-1}\left(\mathbf{1}_{j<t}\left(\widetilde{\mathbf{A}}+\mathbf{\Delta}_{j}\right)+\mathbf{1}_{j=t}\mathbf{I}\right)\mathbf{B}\left(\mathbf{M}-\mathbf{M}^{\prime}\right)\left[\mathbf{w}\right]_{i-1}^{H}\right.\nonumber \\
		& +\left.\sum_{i=0}^{t-1}\left(\prod_{j=i+1}^{t-1}\left(\mathbf{1}_{j<t}\left(\widetilde{\mathbf{A}}+\mathbf{\Delta}_{j}\right)+\mathbf{1}_{j=t}\mathbf{I}\right)-\prod_{j=i+1}^{t-1}\left(\mathbf{1}_{j<t}\left(\widetilde{\mathbf{A}}+\mathbf{\Delta}_{j}^{\prime}\right)+\mathbf{1}_{j=t}\mathbf{I}\right)\right)\mathbf{B}\mathbf{M}^{\prime}\left[\mathbf{w}\right]_{i-1}^{H}\right\Vert \nonumber \\
		& \leq\kappa^{2}HW\beta_{t}\left\Vert \text{\ensuremath{\mathbf{M}}}-\mathbf{M}^{\prime}\right\Vert \nonumber \\
		& +\left\Vert \mathbf{B}\right\Vert MHW\sum_{i=0}^{t-1}\left\Vert \prod_{j=i+1}^{t-1}\left(\mathbf{1}_{j<t}\left(\widetilde{\mathbf{A}}+\mathbf{\Delta}_{j}\right)+\mathbf{1}_{j=t}\mathbf{I}\right)-\prod_{j=i+1}^{t-1}\left(\mathbf{1}_{j<t}\left(\widetilde{\mathbf{A}}+\mathbf{\Delta}_{j}^{\prime}\right)+\mathbf{1}_{j=t}\mathbf{I}\right)\right\Vert \nonumber \\
		& \overset{\left(\ref{eq: F2 bound}.a\right)}{\leq}\kappa^{2}HW\beta_{t}\left\Vert \text{\ensuremath{\mathbf{M}}}-\mathbf{M}^{\prime}\right\Vert +\kappa^{2}\left(1-\gamma\right)^{-1}\left\Vert \mathbf{B}\right\Vert MHW\beta_{t}\sum_{i=0}^{t-1}\left\Vert \mathbf{\Delta}_{i}-\mathbf{\Delta}_{i}^{\prime}\right\Vert ,\nonumber 
	\end{align}
	where inequality $\left(\ref{eq: F2 bound}.a\right)$ is obtained
	by using (\ref{eq:delta-grad-M-part 1-(2)}).
	
	Finally, still using (\ref{eq:delta-grad-M-part 1-(2)}), an upper
	bound of $\left\Vert E_{3}\right\Vert $ is obtained as follows
	\begin{align}
		\left\Vert E_{3}\right\Vert \leq & W\sum_{i=0}^{t-1}\left\Vert \prod_{j=i+1}^{t-1}\left(\mathbf{1}_{j<t}\left(\widetilde{\mathbf{A}}+\mathbf{\Delta}_{j}\right)+\mathbf{1}_{j=t}\mathbf{I}\right)-\prod_{j=i+1}^{t-1}\left(\mathbf{1}_{j<t}\left(\widetilde{\mathbf{A}}+\mathbf{\Delta}_{j}^{\prime}\right)+\mathbf{1}_{j=t}\mathbf{I}\right)\right\Vert \label{eq:F3 bound}\\
		\leq & \kappa^{2}\left(1-\gamma\right)^{-1}W\beta_{t}\sum_{i=0}^{t-1}\left\Vert \mathbf{\Delta}_{i}-\mathbf{\Delta}_{i}^{\prime}\right\Vert .\nonumber 
	\end{align}
	
	It follow directly that
	\begin{align}
		& \left\Vert \mathbf{x}_{t}^{\left(\mathbf{M}\right)}-\mathbf{x}_{t}^{\left(\mathbf{M}^{\prime}\right)}\right\Vert \leq\left\Vert E_{1}\right\Vert +\left\Vert E_{2}\right\Vert +\left\Vert E_{3}\right\Vert \\
		& \leq\kappa^{2}HW\beta_{t}\left\Vert \text{\ensuremath{\mathbf{M}}}-\mathbf{M}^{\prime}\right\Vert +\kappa^{2}\left(1-\gamma\right)^{-1}\left(x_{0}\alpha_{t}+\left(\left\Vert \mathbf{B}\right\Vert HM+1\right)W\beta_{t}\right)\sum_{i=0}^{t-1}\left\Vert \mathbf{\Delta}_{i}-\mathbf{\Delta}_{i}^{\prime}\right\Vert .\nonumber 
	\end{align}
	
	We next observe the relationship
	\begin{align}
		& \left\Vert \mathbf{u}_{t}^{\left(\mathbf{M}\right)}-\mathbf{u}_{t}^{\left(\mathbf{M}^{\prime}\right)}\right\Vert =\left\Vert -\mathbf{K}\mathbf{x}_{t}^{\left(\mathbf{M}\right)}+\mathbf{K}\mathbf{x}_{t}^{\left(\mathbf{M}^{\prime}\right)}+\mathbf{M}\left[\mathbf{w}\right]_{t-1}^{H}-\mathbf{M}^{\prime}\left[\mathbf{w}\right]_{t-1}^{H}\right\Vert \\
		& \leq\kappa\left\Vert \mathbf{x}_{t}^{\left(\mathbf{M}\right)}-\mathbf{x}_{t}^{\left(\mathbf{M}^{\prime}\right)}\right\Vert +HW\left\Vert \text{\ensuremath{\mathbf{M}}}-\mathbf{M}^{\prime}\right\Vert .\nonumber 
	\end{align}
	Therefore, it follows that
	\begin{align}
		& \left\Vert \mathbf{x}_{t}^{\left(\mathbf{M}\right)}-\mathbf{x}_{t}^{\left(\mathbf{M}^{\prime}\right)}\right\Vert +\left\Vert \mathbf{u}_{t}^{\left(\mathbf{M}\right)}-\mathbf{u}_{t}^{\left(\mathbf{M}^{\prime}\right)}\right\Vert \leq\left(1+\kappa\right)\left\Vert \mathbf{x}_{t}^{\left(\mathbf{M}\right)}-\mathbf{x}_{t}^{\left(\mathbf{M}^{\prime}\right)}\right\Vert +HW\left\Vert \text{\ensuremath{\mathbf{M}}}-\mathbf{M}^{\prime}\right\Vert \label{eq: xm-xm' + um-um' bound}\\
		& \leq\left(\left(1+\kappa\right)\kappa^{2}HW\beta_{t}+HW\right)\left\Vert \text{\ensuremath{\mathbf{M}}}-\mathbf{M}^{\prime}\right\Vert \nonumber \\
		& +\left(1+\kappa\right)\kappa^{2}\left(1-\gamma\right)^{-1}\left(x_{0}\alpha_{t}+\left(\left\Vert \mathbf{B}\right\Vert HM+1\right)W\beta_{t}\right)\sum_{i=0}^{t-1}\left\Vert \mathbf{\Delta}_{i}-\mathbf{\Delta}_{i}^{\prime}\right\Vert .\nonumber 
	\end{align}
	
	Combining (\ref{eq: xm-xm' + um-um' bound}) and (\ref{eq:grad ct-1}),
	we obatin the desired result.\medskip{}
\end{proof}

\textbf{Smoothness}: We analyze the smoothness of the per stage cost
and the per stage expected cost in the following Lemma \ref{Lemma: Smoothness per Stage Cost}
and \ref{Lemma: Smoothness Average per Stage Cost}, respectively.

\begin{lemma}
	 \label{Lemma: Smoothness per Stage Cost}
	The per stage cost function $c_{t}\left(\mathbf{M};\left\{ \mathbf{\Delta}_{i}\right\} _{0\leq i<t}\right),\forall1\leq t\leq T,$
	is smooth in the sense that
	\begin{align}
		& \left\Vert \nabla_{\mathbf{M}}c_{t}\left(\mathbf{M};\left\{ \mathbf{\Delta}_{i}\right\} _{0\leq i<t}\right)-\nabla_{\mathbf{M}^{\prime}}c_{t}\left(\mathbf{M}^{\prime};\left\{ \mathbf{\Delta}_{i}^{\prime}\right\} _{0\leq i<t}\right)\right\Vert _{F}\\
		&\leq\lambda_{t}\left\Vert \mathbf{M}-\mathbf{M}^{\prime}\right\Vert _{F}+\nu_{t}\sum_{i=0}^{t-1}\left\Vert \mathbf{\Delta}_{i}-\mathbf{\Delta}_{i}^{\prime}\right\Vert _{F},\nonumber
	\end{align}
	where 
	\begin{align}
		\lambda_{t}= & d_{x}\sqrt{H}\varsigma H^{2}W^{2}\left(1+\left\Vert \mathbf{B}\right\Vert \left(\kappa^{2}+\kappa^{3}\right)\right)\left(\left(\kappa^{2}+\kappa^{3}\right)\beta_{t}+1\right),\\
		\nu_{t}= & d_{x}\sqrt{H}\left(\varsigma HW\left(1+\left\Vert \mathbf{B}\right\Vert \left(\kappa^{2}+\kappa^{3}\right)\right)\left(\kappa^{2}+\kappa^{3}\right)\left(1-\gamma\right)^{-1}+G\left(1-\gamma\right)^{-1}\right.\\
		& \cdot\left.\left(\kappa^{4}+\kappa^{5}\right)HW\left\Vert \mathbf{B}\right\Vert \beta_{t}\right)\left(x_{0}\alpha_{t}+\left(\left\Vert \mathbf{B}\right\Vert HM+1\right)W\beta_{t}\right),\nonumber 
	\end{align}
	and $\mathbf{\Delta}_{i}$ and $\mathbf{\Delta}_{i}^{\prime},\forall0\leq i<t,$
	denotes the policy-dependent state transition perturbations under
	the policy $\mathbf{M}$ and $\mathbf{M}^{\prime},$ respectively.
\end{lemma}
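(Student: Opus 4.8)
The plan is to start from the closed-form gradient expression in (\ref{eq: grad M ct}), which writes $\nabla_{\mathbf{M}} c_t$ as a sum of three families of terms, each a product of a \emph{propagation factor} $\bigl(\prod_{j=i+1}^{t-1}(\mathbf{1}_{j<t}(\widetilde{\mathbf{A}}+\mathbf{\Delta}_j)+\mathbf{1}_{j=t}\mathbf{I})\mathbf{B}\bigr)^\top$ (optionally pre-multiplied by $\mathbf{K}^\top$), a \emph{cost gradient} $\nabla_{\mathbf{x}} c_t$ or $\nabla_{\mathbf{u}} c_t$ evaluated at $(\mathbf{x}_t^{(\mathbf{M})},\mathbf{u}_t^{(\mathbf{M})})$, and a fixed \emph{disturbance factor} $([\mathbf{w}]_{i-1}^H)^\top$. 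Since the disturbances $\{\mathbf{w}_i\}$ and $\mathbf{x}_0$ are held common to both arguments, only the propagation factors (through $\mathbf{\Delta}_j\to\mathbf{\Delta}_j'$) and the cost gradients (through the induced change in the state--action pair) vary between $(\mathbf{M},\{\mathbf{\Delta}_i\})$ and $(\mathbf{M}',\{\mathbf{\Delta}_i'\})$.

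First I would apply the product-difference identity $\|AB-A'B'\|\le\|A-A'\|\,\|B\|+\|A'\|\,\|B-B'\|$ term by term, so that within each summand the change in the propagation factor and the change in the cost gradient are separated. For the propagation factors I would invoke the telescoping estimates already used in the proof of Lemma \ref{Lemma: per stage gradient smooth} (cf. the bounds on $E_1,E_2,E_3$), which control both the magnitude $\|\prod_{j=i+1}^{t-1}(\widetilde{\mathbf{A}}+\mathbf{\Delta}_j)\|$ and the difference $\|\prod_{j=i+1}^{t-1}(\widetilde{\mathbf{A}}+\mathbf{\Delta}_j)-\prod_{j=i+1}^{t-1}(\widetilde{\mathbf{A}}+\mathbf{\Delta}_j')\|$ by $\kappa^2(1-\gamma)^{-1}\beta_t\sum_{i}\|\mathbf{\Delta}_i-\mathbf{\Delta}_i'\|$, using $\|\widetilde{\mathbf{A}}^k\|\le\kappa^2(1-\gamma)^k$ and $\|\mathbf{\Delta}_j\|\le\xi_j$. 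For the cost-gradient magnitudes I would use Lemma \ref{Lemma: gradient ct bound}, and for the cost-gradient differences I would use Lemma \ref{Lemma: per stage gradient smooth} directly, which already packages the change as $\varsigma\bigl((1+\kappa)\kappa^2 HW\beta_t+HW\bigr)\|\mathbf{M}-\mathbf{M}'\|$ plus $\varsigma(1+\kappa)\kappa^2(1-\gamma)^{-1}(x_0\alpha_t+(\|\mathbf{B}\|HM+1)W\beta_t)\sum_i\|\mathbf{\Delta}_i-\mathbf{\Delta}_i'\|$. The disturbance factors contribute a uniform $\sqrt{H}W$ bound.

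Collecting terms, every contribution is of one of two forms: a multiple of $\|\mathbf{M}-\mathbf{M}'\|_F$, arising solely from the $\mathbf{M}$-dependent piece of the cost-gradient difference, or a multiple of $\sum_{i=0}^{t-1}\|\mathbf{\Delta}_i-\mathbf{\Delta}_i'\|$, arising both from the propagation-factor differences and from the $\mathbf{\Delta}$-dependent piece of the gradient difference. Grouping the former into $\lambda_t$ and the latter into $\nu_t$, and converting the operator-norm estimates to the Frobenius norm via $\|\cdot\|_F\le d_x\sqrt{H}\,\|\cdot\|$ for the relevant block-structured quantities, should yield exactly the stated constants.

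The hard part will be the bookkeeping that separates the two types of contributions cleanly: the gradient difference supplied by Lemma \ref{Lemma: per stage gradient smooth} already mixes a $\|\mathbf{M}-\mathbf{M}'\|$ piece and a $\sum\|\mathbf{\Delta}_i-\mathbf{\Delta}_i'\|$ piece, and this difference is multiplied by propagation-factor \emph{magnitudes}, while simultaneously the propagation-factor \emph{differences} are multiplied by gradient magnitudes. Ensuring that each resulting product lands in the correct coefficient and that the accumulated constants match $d_x\sqrt{H}\varsigma H^2W^2(1+\|\mathbf{B}\|(\kappa^2+\kappa^3))((\kappa^2+\kappa^3)\beta_t+1)$ for $\lambda_t$ and the corresponding expression for $\nu_t$ is where the care is required, rather than in any new idea.
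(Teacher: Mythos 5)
Your proposal follows essentially the same route as the paper's own proof: the paper decomposes the gradient difference from (\ref{eq: grad M ct}) into three terms $F_1,F_2,F_3$, applies the add-and-subtract (product-difference) device to each, bounds the propagation-factor differences by the telescoping estimate $\kappa^{2}(1-\gamma)^{-1}\beta_{t}\sum_{i}\Vert\mathbf{\Delta}_{i}-\mathbf{\Delta}_{i}^{\prime}\Vert$, uses Lemma \ref{Lemma: gradient ct bound} for the cost-gradient magnitudes, and finishes by invoking Lemma \ref{Lemma: per stage gradient smooth} to convert the remaining cost-gradient differences into the $\Vert\mathbf{M}-\mathbf{M}^{\prime}\Vert$ and $\sum_{i}\Vert\mathbf{\Delta}_{i}-\mathbf{\Delta}_{i}^{\prime}\Vert$ pieces, exactly as you outline. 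The only cosmetic difference is that the telescoping product bounds are derived inside this lemma's proof in the paper rather than imported from elsewhere, which does not change the argument.
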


\begin{proof}
	We prove this lemma based on the smoothness of the cost function $c_{t}\left(\mathbf{x},\mathbf{u}\right)$
	in  {A \ref{A5}}, i.e., 
	\begin{align}
		& \left\Vert \nabla_{\mathbf{x}}c_{t}\left(\mathbf{x}_{1},\mathbf{u}_{1}\right)-\nabla_{\mathbf{x}}c_{t}\left(\mathbf{x}_{2},\mathbf{u}_{1}\right)\right\Vert +\left\Vert \nabla_{\mathbf{u}}c_{t}\left(\mathbf{x}_{1},\mathbf{u}_{1}\right)-\nabla_{\mathbf{u}}c_{t}\left(\mathbf{x}_{1},\mathbf{u}_{2}\right)\right\Vert \nonumber\\
		&\leq\varsigma\left(\left\Vert \mathbf{x}_{1}-\mathbf{x}_{2}\right\Vert +\left\Vert \mathbf{u}_{1}-\mathbf{u}_{2}\right\Vert \right).\label{eq:grad ct}
	\end{align}
	
	Based on the expression of $\nabla_{\mathbf{M}}c_{t}\left(\mathbf{M};\left\{ \mathbf{\Delta}_{i}\right\} _{0\leq i<t}\right)$
	in (\ref{eq: grad M ct}), it follows that 
	\begin{align}
		& \nabla_{\mathbf{M}}c_{t}\left(\mathbf{M};\left\{ \mathbf{\Delta}_{i}\right\} _{0\leq i<t}\right)-\nabla_{\mathbf{M}^{\prime}}c_{t}\left(\mathbf{M}^{\prime};\left\{ \mathbf{\Delta}_{i}^{\prime}\right\} _{0\leq i<t}\right)\leq F_{1}+F_{2}+F_{3},\label{eq:delta-gradient M}
	\end{align}
	where 
	\begin{align}
		F_{1}= & \sum_{i=0}^{t-1}\left(\left(\prod_{j=i+1}^{t-1}\left(\mathbf{1}_{j<t}\left(\widetilde{\mathbf{A}}+\mathbf{\Delta}_{j}\right)+\mathbf{1}_{j=t}\mathbf{I}\right)\mathbf{B}\right)^{\top}\nabla_{\mathbf{x}}c_{t}\left(\mathbf{x}_{t}^{\left(\mathbf{M}\right)},\mathbf{u}_{t}^{\left(\mathbf{M}\right)}\right)\right.\\
		& -\left.\left(\prod_{j=i+1}^{t-1}\left(\mathbf{1}_{j<t}\left(\widetilde{\mathbf{A}}+\mathbf{\Delta}_{j}^{\prime}\right)+\mathbf{1}_{j=t}\mathbf{I}\right)\mathbf{B}\right)^{\top}\nabla_{\mathbf{x}}c_{t}\left(\mathbf{x}_{t}^{\left(\mathbf{M}^{\prime}\right)},\mathbf{u}_{t}^{\left(\mathbf{M}^{\prime}\right)}\right)\right)\left(\left[\mathbf{w}\right]_{i-1}^{H}\right)^{\top},\nonumber \\
		F_{2}= & \sum_{i=0}^{t-1}\left(\left(\mathbf{K}\prod_{j=i+1}^{t-1}\left(\mathbf{1}_{j<t}\left(\widetilde{\mathbf{A}}+\mathbf{\Delta}_{j}\right)+\mathbf{1}_{j=t}\mathbf{I}\right)\mathbf{B}\right)^{\top}\nabla_{\mathbf{x}}c_{t}\left(\mathbf{x}_{t}^{\left(\mathbf{M}\right)},\mathbf{u}_{t}^{\left(\mathbf{M}\right)}\right)\right.\\
		& -\left.\left(\mathbf{K}\prod_{j=i+1}^{t-1}\left(\mathbf{1}_{j<t}\left(\widetilde{\mathbf{A}}+\mathbf{\Delta}_{j}^{\prime}\right)+\mathbf{1}_{j=t}\mathbf{I}\right)\mathbf{B}\right)^{\top}\nabla_{\mathbf{x}}c_{t}\left(\mathbf{x}_{t}^{\left(\mathbf{M}^{\prime}\right)},\mathbf{u}_{t}^{\left(\mathbf{M}^{\prime}\right)}\right)\right)\left(\left[\mathbf{w}\right]_{i-1}^{H}\right)^{\top},\nonumber \\
		F_{3}= & \left(\nabla_{\mathbf{u}}c_{t}\left(\mathbf{x}_{t}^{\left(\mathbf{M}\right)},\mathbf{u}_{t}^{\left(\mathbf{M}\right)}\right)-\nabla_{\mathbf{u}}c_{t}\left(\mathbf{x}_{t}^{\left(\mathbf{M}^{\prime}\right)},\mathbf{u}_{t}^{\left(\mathbf{M}^{\prime}\right)}\right)\right)\left(\left[\mathbf{w}\right]_{t-1}^{H}\right)^{\top}.
	\end{align}
	
	We next analyze each term in (\ref{eq:delta-gradient M}) one by one.
	Specifically,
	\begin{align}
		& \left\Vert F_{1}\right\Vert =\left\Vert \sum_{i=0}^{t-1}\left(\left(\prod_{j=i+1}^{t-1}\left(\mathbf{1}_{j<t}\left(\widetilde{\mathbf{A}}+\mathbf{\Delta}_{j}\right)+\mathbf{1}_{j=t}\mathbf{I}\right)\mathbf{B}\right)^{\top}\nabla_{\mathbf{x}}c_{t}\left(\mathbf{x}_{t}^{\left(\mathbf{M}\right)},\mathbf{u}_{t}^{\left(\mathbf{M}\right)}\right)\right.\right.\label{eq:delta-grad-M-part1}\\
		& -\left(\prod_{j=i+1}^{t-1}\left(\mathbf{1}_{j<t}\left(\widetilde{\mathbf{A}}+\mathbf{\Delta}_{j}\right)+\mathbf{1}_{j=t}\mathbf{I}\right)\mathbf{B}\right)^{\top}\nabla_{\mathbf{x}}c_{t}\left(\mathbf{x}_{t}^{\left(\mathbf{M}^{\prime}\right)},\mathbf{u}_{t}^{\left(\mathbf{M}^{\prime}\right)}\right)\nonumber \\
		& +\left(\prod_{j=i+1}^{t-1}\left(\mathbf{1}_{j<t}\left(\widetilde{\mathbf{A}}+\mathbf{\Delta}_{j}\right)+\mathbf{1}_{j=t}\mathbf{I}\right)\mathbf{B}\right)^{\top}\nabla_{\mathbf{x}}c_{t}\left(\mathbf{x}_{t}^{\left(\mathbf{M}^{\prime}\right)},\mathbf{u}_{t}^{\left(\mathbf{M}^{\prime}\right)}\right)\nonumber \\
		& -\left.\left.\left(\prod_{j=i+1}^{t-1}\left(\mathbf{1}_{j<t}\left(\widetilde{\mathbf{A}}+\mathbf{\Delta}_{j}^{\prime}\right)+\mathbf{1}_{j=t}\mathbf{I}\right)\mathbf{B}\right)^{\top}\nabla_{\mathbf{x}}c_{t}\left(\mathbf{x}_{t}^{\left(\mathbf{M}^{\prime}\right)},\mathbf{u}_{t}^{\left(\mathbf{M}^{\prime}\right)}\right)\right)\left(\left[\mathbf{w}\right]_{i-1}^{H}\right)^{\top}\right\Vert \nonumber \\
		& \overset{\left(\ref{eq:delta-grad-M-part1}.a\right)}{\leq}HW\left\Vert \mathbf{B}\right\Vert \kappa^{2}\beta_{t}\left\Vert \nabla_{\mathbf{x}}c_{t}\left(\mathbf{x}_{t}^{\left(\mathbf{M}\right)},\mathbf{u}_{t}^{\left(\mathbf{M}\right)}\right)-\nabla_{\mathbf{x}}c_{t}\left(\mathbf{x}_{t}^{\left(\mathbf{M}^{\prime}\right)},\mathbf{u}_{t}^{\left(\mathbf{M}^{\prime}\right)}\right)\right\Vert \nonumber \\
		& +HW\left\Vert \mathbf{B}\right\Vert \kappa^{2}\left(x_{0}G\alpha_{t}+GW\left(\left\Vert \mathbf{B}\right\Vert HM+1\right)\beta_{t}\right)\nonumber\\
		& \cdot\sum_{i=0}^{t-1}\left\Vert \prod_{j=i+1}^{t-1}\left(\mathbf{1}_{j<t}\left(\widetilde{\mathbf{A}}+\mathbf{\Delta}_{j}\right)+\mathbf{1}_{j=t}\mathbf{I}\right)-\prod_{j=i+1}^{t-1}\left(\mathbf{1}_{j<t}\left(\widetilde{\mathbf{A}}+\mathbf{\Delta}_{j}^{\prime}\right)+\mathbf{1}_{j=t}\mathbf{I}\right)\right\Vert ,\nonumber 
	\end{align}
	where inequality $\left(\ref{eq:delta-grad-M-part1}.a\right)$ holds
	due to the definition of $\beta_{t}$ and the upper bound of $\left\Vert \nabla_{\mathbf{x}}c_{t}\left(\mathbf{x}_{t}^{\left(\mathbf{M}^{\prime}\right)},\mathbf{u}_{t}^{\left(\mathbf{M}^{\prime}\right)}\right)\right\Vert $
	provided in Lemma \ref{Lemma: gradient ct bound}.
	
	For the last term in (\ref{eq:delta-grad-M-part1}), we first observe
	that 
	\begin{align}
		& \left\Vert \prod_{j=i+1}^{t-1}\left(\mathbf{1}_{j<t}\left(\widetilde{\mathbf{A}}+\mathbf{\Delta}_{j}^{\prime}\right)+\mathbf{1}_{j=t}\mathbf{I}\right)-\prod_{j=i+1}^{t-1}\left(\mathbf{1}_{j<t}\left(\widetilde{\mathbf{A}}+\mathbf{\Delta}_{j}\right)+\mathbf{1}_{j=t}\mathbf{I}\right)\right\Vert \label{eq:delta-grad-M-part 1-(1)}\\
		& =\left\Vert \left(\mathbf{\Delta}_{i+1}^{\prime}-\mathbf{\Delta}_{i+1}\right)\prod_{j=i+2}^{t-1}\left(\mathbf{1}_{j<t}\left(\widetilde{\mathbf{A}}+\mathbf{\Delta}_{j}^{\prime}\right)+\mathbf{1}_{j=t}\mathbf{I}\right)\right.\nonumber \\
		& +\left(\widetilde{\mathbf{A}}+\mathbf{\Delta}_{i+1}\right)\left(\mathbf{\Delta}_{i+2}^{\prime}-\mathbf{\Delta}_{i+2}\right)\prod_{j=i+3}^{t-1}\left(\mathbf{1}_{j<t}\left(\widetilde{\mathbf{A}}+\mathbf{\Delta}_{j}^{\prime}\right)+\mathbf{1}_{j=t}\mathbf{I}\right)+\cdots\nonumber \\
		& +\prod_{k=i+1}^{i+1+l}\left(\mathbf{1}_{k<t}\left(\widetilde{\mathbf{A}}+\mathbf{\Delta}_{k}\right)+\mathbf{1}_{k=t}\mathbf{I}\right)\left(\mathbf{\Delta}_{i+1+l+1}^{\prime}-\mathbf{\Delta}_{i+1+l+1}\right)\prod_{j=i+1+l+2}^{t-1}\left(\mathbf{1}_{j<t}\left(\widetilde{\mathbf{A}}+\mathbf{\Delta}_{j}^{\prime}\right)+\mathbf{1}_{j=t}\mathbf{I}\right)\nonumber \\
		& +\cdots+\left.\prod_{k=i+1}^{t-2}\left(\mathbf{1}_{k<t}\left(\widetilde{\mathbf{A}}+\mathbf{\Delta}_{k}\right)+\mathbf{1}_{k=t}\mathbf{I}\right)\left(\mathbf{\Delta}_{t-1}^{\prime}-\mathbf{\Delta}_{t-1}\right)\right\Vert \nonumber \\
		& \leq\left(\prod_{j=i+1}^{t-1}\left(\mathbf{1}_{j<t}\left(1-\gamma+\kappa^{2}\xi_{j}\right)+\mathbf{1}_{j=t}\right)\right)\left(\kappa^{2}\left\Vert \mathbf{\Delta}_{i+1}^{\prime}-\mathbf{\Delta}_{i+1}\right\Vert \left(1-\gamma+\kappa^{2}\xi_{i+1}\right)^{-1}\right.\nonumber \\
		& +\cdots+\kappa^{2}\left\Vert \left(\mathbf{\Delta}_{i+1+l+1}^{\prime}-\mathbf{\Delta}_{i+1+l+1}\right)\right\Vert \left(1-\gamma+\kappa^{2}\xi_{i+1+l+1}\right)^{-1}+\cdots\nonumber \\
		& +\left.\kappa^{2}\left\Vert \mathbf{\Delta}_{t-1}^{\prime}-\mathbf{\Delta}_{t-1}\right\Vert \left(1-\gamma+\kappa^{2}\xi_{t-1}\right)^{-1}\right)\nonumber \\
		& \leq\left(\prod_{j=i+1}^{t-1}\left(\mathbf{1}_{j<t}\left(1-\gamma+\kappa^{2}\xi_{j}\right)+\mathbf{1}_{j=t}\right)\right)\sum_{k=i+1}^{t-1}\kappa^{2}\left(1-\gamma\right)^{-1}\left\Vert \mathbf{\Delta}_{k}^{\prime}-\mathbf{\Delta}_{k}\right\Vert \nonumber \\
		& \leq\left(\prod_{j=i+1}^{t-1}\left(\mathbf{1}_{j<t}\left(1-\gamma+\kappa^{2}\xi_{j}\right)+\mathbf{1}_{j=t}\right)\right)\sum_{k=0}^{t-1}\kappa^{2}\left(1-\gamma\right)^{-1}\left\Vert \mathbf{\Delta}_{k}^{\prime}-\mathbf{\Delta}_{k}\right\Vert .\nonumber 
	\end{align}

As a result,
\begin{align}
	& \sum_{i=0}^{t-1}\left\Vert \prod_{j=i+1}^{t-1}\left(\mathbf{1}_{j<t}\left(\widetilde{\mathbf{A}}+\mathbf{\Delta}_{j}^{\prime}\right)+\mathbf{1}_{j=t}\mathbf{I}\right)-\prod_{j=i+1}^{t-1}\left(\mathbf{1}_{j<t}\left(\widetilde{\mathbf{A}}+\mathbf{\Delta}_{j}\right)+\mathbf{1}_{j=t}\mathbf{I}\right)\right\Vert \label{eq:delta-grad-M-part 1-(2)}\\
	& \leq\sum_{i=0}^{t-1}\prod_{j=i+1}^{t-1}\left(\mathbf{1}_{j<t}\left(1-\gamma+\kappa^{2}\xi_{j}\right)+\mathbf{1}_{j=t}\right)\left(\sum_{k=0}^{t-1}\kappa^{2}\left(1-\gamma\right)^{-1}\left\Vert \mathbf{\Delta}_{k}^{\prime}-\mathbf{\Delta}_{k}\right\Vert \right)\nonumber \\
	& =\kappa^{2}\left(1-\gamma\right)^{-1}\beta_{t}\sum_{i=0}^{t-1}\left\Vert \mathbf{\Delta}_{i}^{\prime}-\mathbf{\Delta}_{i}\right\Vert .\nonumber 
\end{align}

Substitue (\ref{eq:delta-grad-M-part 1-(2)}) into (\ref{eq:delta-grad-M-part1}),
it follows
\begin{align}
	\left\Vert E_{1}\right\Vert \leq & HW\left\Vert \mathbf{B}\right\Vert \kappa^{2}\beta_{t}\left(\left\Vert \nabla_{\mathbf{x}}c_{t}\left(\mathbf{x}_{t}^{\left(\mathbf{M}\right)},\mathbf{u}_{t}^{\left(\mathbf{M}\right)}\right)-\nabla_{\mathbf{x}}c_{t}\left(\mathbf{x}_{t}^{\left(\mathbf{M}^{\prime}\right)},\mathbf{u}_{t}^{\left(\mathbf{M}^{\prime}\right)}\right)\right\Vert \right.\label{eq:delta-grad-M-part 1 full}\\
	& +\left.\left(x_{0}G\alpha_{t}+GW\left(\left\Vert \mathbf{B}\right\Vert HM+1\right)\beta_{t}\right)\kappa^{2}\left(1-\gamma\right)^{-1}\sum_{i=0}^{t-1}\left\Vert \mathbf{\Delta}_{i}^{\prime}-\mathbf{\Delta}_{i}\right\Vert \right)\nonumber 
\end{align}

The upper bounds for $\left\Vert E_{2}\right\Vert $ and $\left\Vert E_{3}\right\Vert $
can be readily obtained as follows 
\begin{align}
	& \left\Vert E_{2}\right\Vert \leq\kappa HW\left\Vert \mathbf{B}\right\Vert \kappa^{2}\beta_{t}\left(\left\Vert \nabla_{\mathbf{x}}c_{t}\left(\mathbf{x}_{t}^{\left(\mathbf{M}\right)},\mathbf{u}_{t}^{\left(\mathbf{M}\right)}\right)-\nabla_{\mathbf{x}}c_{t}\left(\mathbf{x}_{t}^{\left(\mathbf{M}^{\prime}\right)},\mathbf{u}_{t}^{\left(\mathbf{M}^{\prime}\right)}\right)\right\Vert \right.\label{eq:delta-grad-M-part2}\\
	& +\left.\left(x_{0}G\alpha_{t}+GW\left(\left\Vert \mathbf{B}\right\Vert HM+1\right)\beta_{t}\right)\kappa^{2}\left(1-\gamma\right)^{-1}\sum_{i=0}^{t-1}\left\Vert \mathbf{\Delta}_{i}^{\prime}-\mathbf{\Delta}_{i}\right\Vert \right)\nonumber 
\end{align}
and
\begin{align}
	& \left\Vert E_{3}\right\Vert \leq HW\left\Vert \nabla_{\mathbf{u}}c_{t}\left(\mathbf{x}_{t}^{\left(\mathbf{M}\right)},\mathbf{u}_{t}^{\left(\mathbf{M}\right)}\right)-\nabla_{\mathbf{u}}c_{t}\left(\mathbf{x}_{t}^{\left(\mathbf{M}^{\prime}\right)},\mathbf{u}_{t}^{\left(\mathbf{M}^{\prime}\right)}\right)\right\Vert .\label{eq:delta-grad-M-part3}
\end{align}

Substitute (\ref{eq:delta-grad-M-part 1 full}), (\ref{eq:delta-grad-M-part2})
and (\ref{eq:delta-grad-M-part3}) into (\ref{eq:delta-gradient M}),
we have 
\begin{align}
	& \left\Vert \nabla_{\mathbf{M}}c_{t}\left(\mathbf{M};\left\{ \mathbf{\Delta}_{i}\right\} _{0\leq i<t}\right)-\nabla_{\mathbf{M}^{\prime}}c_{t}\left(\mathbf{M}^{\prime};\left\{ \mathbf{\Delta}_{i}^{\prime}\right\} _{0\leq i<t}\right)\right\Vert \label{eq:delta grad-M-final}\\
	& \leq\left(1+\left\Vert \mathbf{B}\right\Vert \left(\kappa^{2}+\kappa^{3}\right)\right)HW\left(\left\Vert \nabla_{\mathbf{x}}c_{t}\left(\mathbf{x}_{t}^{\left(\mathbf{M}\right)},\mathbf{u}_{t}^{\left(\mathbf{M}\right)}\right)-\nabla_{\mathbf{x}}c_{t}\left(\mathbf{x}_{t}^{\left(\mathbf{M}^{\prime}\right)},\mathbf{u}_{t}^{\left(\mathbf{M}^{\prime}\right)}\right)\right\Vert \right.\nonumber \\
	& +\left.\left\Vert \nabla_{\mathbf{u}}c_{t}\left(\mathbf{x}_{t},\mathbf{u}_{t}^{\left(\mathbf{M}\right)}\right)-\nabla_{\mathbf{u}}c_{t}\left(\mathbf{x}_{t},\mathbf{u}_{t}^{\left(\mathbf{M}^{\prime}\right)}\right)\right\Vert \right)+\widetilde{\beta}_{t}\sum_{i=0}^{t-1}\left\Vert \mathbf{\Delta}_{i}^{\prime}-\mathbf{\Delta}_{i}\right\Vert ,\nonumber 
\end{align}
where $\widetilde{\beta}_{t}=\left(1-\gamma\right)^{-1}\left(\kappa^{4}+\kappa^{5}\right)HW\left\Vert \mathbf{B}\right\Vert \beta_{t}\left(x_{0}G\alpha_{t}+GW\left(\left\Vert \mathbf{B}\right\Vert HM+1\right)\beta_{t}\right).$

Applying Lemma \ref{Lemma: per stage gradient smooth} to (\ref{eq:delta grad-M-final})
leads to the desired result.\medskip{}
\end{proof}

Before we proceed to analyze the gradient difference of the expected
per stage cost, we cite the following tool lemma in \cite{perdomo2020performative}.

\begin{lemma}
	(Kantorovich-Rubinstein (Lemma D.3 in \cite{perdomo2020performative})) \label{Lemma: Kantorovich-Rubinstein } A
	distribution map $\mathcal{D}\left(\cdot\right)$ is $\varepsilon$-sensitive
	if and only if for all $\mathbf{M},$$\mathbf{M}^{\prime}\in\mathcal{\mathbb{M}}:$
	\begin{align}
		& \sup\left\{ \mathbb{E}_{\mathbf{A}\sim\mathcal{D}\left(\mathbf{M}\right)}\left[g\left(\mathbf{A}\right)\right]-\mathbb{E}_{\mathbf{A}^{\prime}\sim\mathcal{D}\left(\mathbf{M}^{\prime}\right)}\left[g\left(\mathbf{A}^{\prime}\right)\right]:g:\mathbb{R}^{d_{x}\times d_{x}}\rightarrow\mathbb{R},g\ 1-Lipschitz\right\}\nonumber\\
		& \leq\varepsilon\left\Vert \mathbf{M}-\mathbf{M}^{\prime}\right\Vert _{F}.
	\end{align}
\end{lemma}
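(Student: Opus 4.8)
The plan is to recognize that this statement is nothing more than the Kantorovich--Rubinstein duality theorem combined with the definition of $\varepsilon$-sensitivity, so the proof reduces to unpacking two definitions and invoking one classical result. Recall from Assumption A\ref{epsilon-Sensitivity} that ``$\mathcal{D}(\cdot)$ is $\varepsilon$-sensitive'' means $\mathcal{W}^{1}(\mathcal{D}(\mathbf{M}),\mathcal{D}(\mathbf{M}'))\leq\varepsilon\|\mathbf{M}-\mathbf{M}'\|_{F}$ for all $\mathbf{M},\mathbf{M}'\in\mathbb{M}$, where the Wasserstein-1 distance is defined through couplings as
\[
\mathcal{W}^{1}(\mu,\nu)=\inf_{\pi\in\Pi(\mu,\nu)}\int\|x-y\|\,d\pi(x,y),
\]
with $\Pi(\mu,\nu)$ the set of joint distributions on $\mathbb{R}^{d_{x}\times d_{x}}\times\mathbb{R}^{d_{x}\times d_{x}}$ whose marginals are $\mu$ and $\nu$. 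Thus the entire content of the lemma is the passage between this primal (coupling) description and a dual (test-function) description of the same quantity.

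The central step I would carry out is to state the Kantorovich--Rubinstein duality, namely the identity
\[
\mathcal{W}^{1}(\mu,\nu)=\sup\left\{\mathbb{E}_{X\sim\mu}[g(X)]-\mathbb{E}_{Y\sim\nu}[g(Y)]:g\text{ is }1\text{-Lipschitz}\right\},
\]
and then both directions of the asserted equivalence follow in a single line each. For the forward direction, if $\mathcal{D}$ is $\varepsilon$-sensitive, the supremum appearing on the left of the claimed inequality equals $\mathcal{W}^{1}(\mathcal{D}(\mathbf{M}),\mathcal{D}(\mathbf{M}'))$ by duality, which is bounded by $\varepsilon\|\mathbf{M}-\mathbf{M}'\|_{F}$. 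For the converse, if the supremum over $1$-Lipschitz test functions is at most $\varepsilon\|\mathbf{M}-\mathbf{M}'\|_{F}$, then the same duality identifies this supremum with $\mathcal{W}^{1}(\mathcal{D}(\mathbf{M}),\mathcal{D}(\mathbf{M}'))$, yielding $\varepsilon$-sensitivity directly. Since both implications are immediate once the duality identity is in hand, the logical skeleton of the proof is essentially trivial.

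The main obstacle is the duality theorem itself, which is the genuine analytic content hidden behind the citation. A self-contained argument would view the primal coupling problem as an infinite-dimensional linear program and establish strong duality in two halves: the weak-duality bound $\int g\,d\mu-\int g\,d\nu\leq\int\|x-y\|\,d\pi$ for any $1$-Lipschitz $g$ and any coupling $\pi$ is immediate from $g(x)-g(y)\leq\|x-y\|$ integrated against $\pi$; the reverse inequality, that the dual supremum matches the primal infimum, is where all the effort lies and requires a minimax or separation argument (for instance Fenchel--Rockafellar duality, or a Hahn--Banach separation on the space of signed measures together with a tightness/compactness step), plus the observation via $c$-concavity that an optimal dual potential can be taken $1$-Lipschitz. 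Because the surrounding paper only needs the result as a black box, I would present the proof as the two-line deduction above and defer the duality proper to the cited Lemma~D.3 of \cite{perdomo2020performative}, explicitly flagging the attainment/compactness step as the point at which any fully rigorous treatment must invest its technical work.
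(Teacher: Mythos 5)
Your proposal is correct and matches the paper's treatment: the paper offers no proof of this lemma at all, simply citing it as Lemma D.3 of \cite{perdomo2020performative}, and your reduction to the Kantorovich--Rubinstein duality identity (with both implications following in one line each, and the strong-duality step deferred to the citation) is exactly the standard argument behind that reference. The only detail worth making explicit is that the Lipschitz condition on $g$ and the Wasserstein-1 distance must be taken with respect to the same norm on $\mathbb{R}^{d_{x}\times d_{x}}$, and that the bounded support of $\mathbf{\Delta}_{t}$ guarantees the finite first moments needed for the duality to apply.
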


The smoothness regarding the expected per stage cost is summarized
below.
\begin{lemma}
	\label{Lemma: Smoothness Average per Stage Cost}
	For any $\mathbf{M},\mathbf{M}^{\prime},\mathbf{M}_{1},\mathbf{M}_{2}\in\mathcal{\mathbb{M}}$
	and $\forall1\leq t\leq T,$ the following inequality holds
	\begin{align}
		& \left\Vert \mathbb{E}_{\mathbf{x}_{0},\left\{ \mathbf{\Delta}_{i}\sim\mathcal{D}_{i}\left(\mathbf{M}_{1}\right)\right\} _{0\leq i<t},\left\{ \mathbf{w}_{i}\right\} _{0\leq i<t}}\left[\nabla_{\mathbf{M}}c_{t}\left(\mathbf{M};\left\{ \mathbf{\Delta}_{i}\right\} _{0\leq i<t}\right)\right]\right.\label{eq:average grad diff2}\\
		& \left.-\mathbb{E}_{\mathbf{x}_{0},\left\{ \mathbf{\Delta}_{i}^{\prime}\sim\mathcal{D}_{i}\left(\mathbf{M}_{2}\right)\right\} _{0\leq i<t},\left\{ \mathbf{w}_{i}\right\} _{0\leq i<t}}\left[\nabla_{\mathbf{M}^{\prime}}c_{t}\left(\mathbf{M}^{\prime};\left\{ \mathbf{\Delta}_{i}^{\prime}\right\} _{0\leq i<t}\right)\right]\right\Vert _{F}\nonumber \\
		& \leq\lambda_{t}\left\Vert \mathbf{M}-\mathbf{M}^{\prime}\right\Vert _{F}+\nu_{t}\sum_{i=0}^{t-1}\varepsilon_{i}\left\Vert \mathbf{M}_{1}-\mathbf{M}_{2}\right\Vert _{F}.\nonumber 
	\end{align}
\end{lemma}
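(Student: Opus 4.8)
The plan is to decouple the two sources of discrepancy --- the change of the policy $\mathbf{M}\to\mathbf{M}'$ inside the gradient, and the change of the perturbation laws $\mathcal{D}_i(\mathbf{M}_1)\to\mathcal{D}_i(\mathbf{M}_2)$ --- by inserting an intermediate quantity. Writing $E_1$ and $E_2$ for the two expectations on the left-hand side of \eqref{eq:average grad diff2}, I would introduce
\[
E_{\mathrm{mid}}=\mathbb{E}_{\mathbf{x}_0,\{\mathbf{\Delta}_i\sim\mathcal{D}_i(\mathbf{M}_1)\}_{0\le i<t},\{\mathbf{w}_i\}_{0\le i<t}}\left[\nabla_{\mathbf{M}'}c_t\left(\mathbf{M}';\{\mathbf{\Delta}_i\}_{0\le i<t}\right)\right],
\]
which keeps the perturbation law at $\mathcal{D}_i(\mathbf{M}_1)$ but evaluates the gradient at the second policy $\mathbf{M}'$. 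The triangle inequality then bounds the target by $\left\Vert E_1-E_{\mathrm{mid}}\right\Vert_F+\left\Vert E_{\mathrm{mid}}-E_2\right\Vert_F$.

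For the first piece, both expectations are over the \emph{same} random objects $\mathbf{x}_0,\{\mathbf{w}_i\},\{\mathbf{\Delta}_i\sim\mathcal{D}_i(\mathbf{M}_1)\}$, so Jensen's inequality lets me pull the Frobenius norm inside and bound $\left\Vert E_1-E_{\mathrm{mid}}\right\Vert_F$ by $\mathbb{E}\left\Vert \nabla_{\mathbf{M}}c_t(\mathbf{M};\{\mathbf{\Delta}_i\})-\nabla_{\mathbf{M}'}c_t(\mathbf{M}';\{\mathbf{\Delta}_i\})\right\Vert_F$. Applying Lemma \ref{Lemma: Smoothness per Stage Cost} to a single common realization (so that the $\sum_i\left\Vert \mathbf{\Delta}_i-\mathbf{\Delta}_i'\right\Vert$ term vanishes) yields the clean bound $\lambda_t\left\Vert \mathbf{M}-\mathbf{M}'\right\Vert_F$.

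The second piece is where the sensitivity enters and is the main obstacle. Here the gradient policy is frozen at $\mathbf{M}'$, but the perturbations follow two different product laws. I would first dualize the Frobenius norm as $\left\Vert\cdot\right\Vert_F=\sup_{\left\Vert U\right\Vert_F\le1}\langle U,\cdot\rangle$, reducing to the scalar functional $h_U(\{\mathbf{\Delta}_i\})=\langle U,\nabla_{\mathbf{M}'}c_t(\mathbf{M}';\{\mathbf{\Delta}_i\})\rangle$, which by Lemma \ref{Lemma: Smoothness per Stage Cost} (taken with $\mathbf{M}=\mathbf{M}'$) is jointly $\nu_t$-Lipschitz in the perturbations, hence $\nu_t$-Lipschitz in each coordinate $\mathbf{\Delta}_k$ separately. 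To pass from one product law to the other I would telescope over coordinates: defining hybrid measures $\mu_k$ in which $\mathbf{\Delta}_0,\dots,\mathbf{\Delta}_{k-1}\sim\mathcal{D}(\mathbf{M}_2)$ and $\mathbf{\Delta}_k,\dots,\mathbf{\Delta}_{t-1}\sim\mathcal{D}(\mathbf{M}_1)$, so that $\mu_0$ is the all-$\mathbf{M}_1$ law and $\mu_t$ the all-$\mathbf{M}_2$ law, the difference $\mathbb{E}_{\mu_0}[h_U]-\mathbb{E}_{\mu_t}[h_U]$ splits into $t$ single-coordinate swaps. Using the mutual independence of the $\mathbf{\Delta}_i$ (Assumption A\ref{A2}), I would condition on all coordinates except the $k$-th, apply the Kantorovich--Rubinstein duality of Lemma \ref{Lemma: Kantorovich-Rubinstein } together with the $\varepsilon_k$-sensitivity of Assumption A\ref{epsilon-Sensitivity} to the $\nu_t$-Lipschitz map in $\mathbf{\Delta}_k$, and obtain the per-swap bound $\nu_t\varepsilon_k\left\Vert \mathbf{M}_1-\mathbf{M}_2\right\Vert_F$. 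Summing over $k$ and taking the supremum over $U$ gives $\left\Vert E_{\mathrm{mid}}-E_2\right\Vert_F\le\nu_t\sum_{i=0}^{t-1}\varepsilon_i\left\Vert \mathbf{M}_1-\mathbf{M}_2\right\Vert_F$; adding the two pieces produces exactly \eqref{eq:average grad diff2}.

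The delicate points to get right are (i) the coordinate-wise Lipschitz constant $\nu_t$ for the scalarized gradient $h_U$, and (ii) the hybrid coupling across the $t$ mutually independent perturbation laws, since Lemma \ref{Lemma: Kantorovich-Rubinstein } as stated governs a single distribution map rather than a product --- the independence assumption is precisely what legitimizes the one-coordinate-at-a-time reduction. The outer expectation over $\mathbf{x}_0$ and $\{\mathbf{w}_i\}$ causes no difficulty, because $\lambda_t,\nu_t$ are uniform (worst-case in $\left\Vert \mathbf{w}_i\right\Vert\le W$) and these variables are independent of the perturbations, so the bound survives the outer expectation unchanged.
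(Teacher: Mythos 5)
Your proposal is correct and follows essentially the same route as the paper's proof: the paper likewise telescopes through hybrid intermediate expectations (first swapping the policy argument $\mathbf{M}\to\mathbf{M}'$ to extract the $\lambda_t$ term via Lemma \ref{Lemma: Smoothness per Stage Cost}, then swapping the perturbation laws one coordinate at a time), and for each single-coordinate swap it conditions on the remaining perturbations, scalarizes the matrix-valued gradient difference against a unit-Frobenius-norm matrix $V$ (your $\sup_{\left\Vert U\right\Vert_F\le 1}$ dualization), and applies the Kantorovich--Rubinstein lemma with the $\nu_t$-Lipschitz property and $\varepsilon_k$-sensitivity to obtain the per-swap bound $\nu_t\varepsilon_k\left\Vert\mathbf{M}_1-\mathbf{M}_2\right\Vert_F$. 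The delicate points you flag (coordinate-wise Lipschitzness of the scalarized gradient and the independence-enabled one-coordinate-at-a-time reduction) are exactly the ingredients the paper uses.
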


\begin{proof}
	The norm of the gradient difference in (\ref{eq:average grad diff2})
	can be expanded as 
	\begin{align}
		& \left\Vert \mathbb{E}\left[\nabla_{\mathbf{M}}c_{t}\left(\mathbf{M};\left\{ \mathbf{\Delta}_{i}\right\} _{0\leq i<t}\right)\right]-\nabla_{\mathbf{M}^{\prime}}c_{t}\left(\mathbf{M}^{\prime};\left\{ \mathbf{\Delta}_{i}^{\prime}\right\} _{0\leq i<t}\right)\right\Vert _{F}\label{eq: expanded average grad diff norm}\\
		& \leq\left\Vert \mathbb{E}\left[\nabla_{\mathbf{M}}c_{t}\left(\mathbf{M};\left\{ \mathbf{\Delta}_{i}\right\} _{0\leq i<t}\right)-\nabla_{\mathbf{M}^{\prime}}c_{t}\left(\mathbf{M}^{\prime},\left\{ \mathbf{\Delta}_{i}\right\} _{0\leq i<t}\right)\right]\right\Vert _{F}\nonumber \\
		& +\left\Vert \mathbb{E}\left[\nabla_{\mathbf{M}^{\prime}}c_{t}\left(\mathbf{M}^{\prime},\left\{ \mathbf{\Delta}_{i}\right\} _{0\leq i<t}\right)-\nabla_{\mathbf{M}^{\prime}}c_{t}\left(\mathbf{M}^{\prime},\mathbf{\Delta}_{1}^{\prime},\left\{ \mathbf{\Delta}_{i}\right\} _{1\leq i<t}\right)\right]\right\Vert _{F}+\cdots\nonumber \\
		& +\left\Vert \mathbb{E}\left[\nabla_{\mathbf{\mathbf{M}^{\prime}}}c_{t}\left(\mathbf{M}^{\prime},\left\{ \mathbf{\Delta}_{j}^{\prime}\right\} _{0\leq j\leq k-1},\left\{ \mathbf{\Delta}_{i}\right\} _{k\leq i<t}\right)-\nabla_{\mathbf{M}^{\prime}}c_{t}\left(\mathbf{M}^{\prime},\left\{ \mathbf{\Delta}_{j}^{\prime}\right\} _{0\leq j\leq k},\left\{ \mathbf{\Delta}_{i}\right\} _{k+1\leq i<t}\right)\right]\right\Vert _{F}\nonumber \\
		& +\cdots+\left\Vert \mathbb{E}\left[\nabla_{\mathbf{M}^{\prime}}c_{t}\left(\mathbf{M}^{\prime},\left\{ \mathbf{\Delta}_{i}^{\prime}\right\} _{0\leq i<t-1},\mathbf{\Delta}_{t-1}\right)-\nabla_{\mathbf{M}^{\prime}}c_{t}\left(\mathbf{M}^{\prime},\left\{ \mathbf{\Delta}_{i}^{\prime}\right\} _{0\leq i<t-1}\right)\right]\right\Vert _{F},\nonumber 
	\end{align}
	where we drop the subscript in $\mathbb{E}\left[\cdot\right]$ since
	the associated randomness is clear from the context.
	
	We next analyze a general term in R.H.S. of inequality (\ref{eq: expanded average grad diff norm}).
	Specifically, 
	\begin{align}
		& \left\Vert \mathbb{E}\left[\nabla_{\mathbf{\mathbf{M}^{\prime}}}c_{t}\left(\mathbf{M}^{\prime},\left\{ \mathbf{\Delta}_{j}^{\prime}\right\} _{0\leq j\leq k-1},\left\{ \mathbf{\Delta}_{i}\right\} _{k\leq i<t}\right)-\nabla_{\mathbf{M}^{\prime}}c_{t}\left(\mathbf{M}^{\prime},\left\{ \mathbf{\Delta}_{j}^{\prime}\right\} _{0\leq j\leq k},\left\{ \mathbf{\Delta}_{i}\right\} _{k+1\leq i<t}\right)\right]\right\Vert _{F}\\
		& =\left\Vert \mathbb{E}\left[\mathbb{E}\left[\nabla_{\mathbf{\mathbf{M}^{\prime}}}c_{t}\left(\mathbf{M}^{\prime},\left\{ \mathbf{\Delta}_{j}^{\prime}\right\} _{0\leq j\leq k-1},\mathbf{\Delta}_{k},\left\{ \mathbf{\Delta}_{i}\right\} _{k+1\leq i<t}\right)\right.\right.\right.\nonumber \\
		& \left.\left.\left.\left.-\nabla_{\mathbf{M}^{\prime}}c_{t}\left(\mathbf{M}^{\prime},\left\{ \mathbf{\Delta}_{j}^{\prime}\right\} _{0\leq j\leq k-1},\mathbf{\Delta}_{k}^{\prime},\left\{ \mathbf{\Delta}_{i}\right\} _{k+1\leq i<t}\right)\right|\left\{ \mathbf{\Delta}_{j}^{\prime}\right\} _{0\leq j\leq k-1},\left\{ \mathbf{\Delta}_{i}\right\} _{k+1\leq i<t}\right]\right]\right\Vert _{F}\nonumber \\
		& \leq\mathbb{E}\left[\left\Vert \mathbb{E}\left[\nabla_{\mathbf{\mathbf{M}^{\prime}}}c_{t}\left(\mathbf{M}^{\prime},\left\{ \mathbf{\Delta}_{j}^{\prime}\right\} _{0\leq j\leq k-1},\mathbf{\Delta}_{k},\left\{ \mathbf{\Delta}_{i}\right\} _{k+1\leq i<t}\right)\right.\right.\right.\nonumber \\
		& \left.\left.\left.\left.-\nabla_{\mathbf{M}^{\prime}}c_{t}\left(\mathbf{M}^{\prime},\left\{ \mathbf{\Delta}_{j}^{\prime}\right\} _{0\leq j\leq k-1},\mathbf{\Delta}_{k}^{\prime},\left\{ \mathbf{\Delta}_{i}\right\} _{k+1\leq i<t}\right)\right|\left\{ \mathbf{\Delta}_{j}^{\prime}\right\} _{0\leq j\leq k-1},\left\{ \mathbf{\Delta}_{i}\right\} _{k+1\leq i<t}\right]\right\Vert _{F}\right].\nonumber 
	\end{align}
	
	Given $\left\{ \left\{ \mathbf{\Delta}_{j}^{\prime}\right\} _{0\leq j\leq k-1},\left\{ \mathbf{\Delta}_{i}\right\} _{k+1\leq i<t}\right\} ,$
	for notation conciseness, we abbreviate 
	
	$\nabla_{\mathbf{\mathbf{M}^{\prime}}}c_{t}\left(\mathbf{M}^{\prime},\left\{ \mathbf{\Delta}_{j}^{\prime}\right\} _{0\leq j\leq k-1},\mathbf{\Delta}_{k},\left\{ \mathbf{\Delta}_{i}\right\} _{k+1\leq i<t}\right)$
	and $\nabla_{\mathbf{\mathbf{M}^{\prime}}}c_{t}\left(\mathbf{M}^{\prime},\left\{ \mathbf{\Delta}_{j}^{\prime}\right\} _{0\leq j\leq k-1},\mathbf{\Delta}_{k}^{\prime},\left\{ \mathbf{\Delta}_{i}\right\} _{k+1\leq i<t}\right)$
	as $\nabla_{\mathbf{\mathbf{M}^{\prime}}}c_{t}\left(\mathbf{M}^{\prime},\mathbf{\Delta}_{k}\right)$
	and $\nabla_{\mathbf{\mathbf{M}^{\prime}}}c_{t}\left(\mathbf{M}^{\prime},\mathbf{\Delta}_{k}^{\prime}\right),$
	respectively. It follows that 
	\begin{align}
		& \left\Vert \mathbb{E}\left[\nabla_{\mathbf{\mathbf{M}^{\prime}}}c_{t}\left(\mathbf{M}^{\prime},\mathbf{\Delta}_{k}\right)-\nabla_{\mathbf{\mathbf{M}^{\prime}}}c_{t}\left(\mathbf{M}^{\prime},\mathbf{\Delta}_{k}^{\prime}\right)\right]\right\Vert _{F}^{2}\label{eq:Smooth Average per stage cost -1}\\
		& =\mathrm{Tr}\left(\left(\mathbb{E}\left[\nabla_{\mathbf{\mathbf{M}^{\prime}}}c_{t}\left(\mathbf{M}^{\prime},\mathbf{\Delta}_{k}\right)\right]-\mathbb{E}\left[\nabla_{\mathbf{\mathbf{M}^{\prime}}}c_{t}\left(\mathbf{M}^{\prime},\mathbf{\Delta}_{k}^{\prime}\right)\right]\right)^{\top}\left(\mathbb{E}\left[\nabla_{\mathbf{\mathbf{M}^{\prime}}}c_{t}\left(\mathbf{M}^{\prime},\mathbf{\Delta}_{k}\right)\right]-\mathbb{E}\left[\nabla_{\mathbf{\mathbf{M}^{\prime}}}c_{t}\left(\mathbf{M}^{\prime},\mathbf{\Delta}_{k}^{\prime}\right)\right]\right)\right)\nonumber \\
		& =\left\Vert \mathbb{E}\left[\nabla_{\mathbf{\mathbf{M}^{\prime}}}c_{t}\left(\mathbf{M}^{\prime},\mathbf{\Delta}_{k}\right)-\nabla_{\mathbf{\mathbf{M}^{\prime}}}c_{t}\left(\mathbf{M}^{\prime},\mathbf{\Delta}_{k}^{\prime}\right)\right]\right\Vert _{F}\mathrm{Tr}\left(V^{\top}\mathbb{E}\left[\nabla_{\mathbf{\mathbf{M}^{\prime}}}c_{t}\left(\mathbf{M}^{\prime},\mathbf{\Delta}_{k}\right)\right]-V^{\top}\mathbb{E}\left[\nabla_{\mathbf{\mathbf{M}^{\prime}}}c_{t}\left(\mathbf{M}^{\prime},\mathbf{\Delta}_{k}^{\prime}\right)\right]\right),\nonumber 
	\end{align}
	where 
	\begin{align}
		V & =\frac{\mathbb{E}\left[\nabla_{\mathbf{\mathbf{M}^{\prime}}}c_{t}\left(\mathbf{M}^{\prime},\mathbf{\Delta}_{k}\right)\right]-\mathbb{E}\left[\nabla_{\mathbf{\mathbf{M}^{\prime}}}c_{t}\left(\mathbf{M}^{\prime},\mathbf{\Delta}_{k}^{\prime}\right)\right]}{\left\Vert \mathbb{E}\left[\nabla_{\mathbf{\mathbf{M}^{\prime}}}c_{t}\left(\mathbf{M}^{\prime},\mathbf{\Delta}_{k}\right)-\nabla_{\mathbf{\mathbf{M}^{\prime}}}c_{t}\left(\mathbf{M}^{\prime},\mathbf{\Delta}_{k}^{\prime}\right)\right]\right\Vert _{F}},\ \left\Vert V\right\Vert _{F}=1.
	\end{align}
	
	Based on Lemma \ref{Lemma: Smoothness per Stage Cost}, given $\left\{ \left\{ \mathbf{\Delta}_{j}^{\prime}\right\} _{0\leq j\leq k-1},\left\{ \mathbf{\Delta}_{i}\right\} _{k+1\leq i<t}\right\} ,$
	the conditional gradient $V^{\top}\mathbb{E}\left[\nabla_{\mathbf{\mathbf{M}^{\prime}}}c_{t}\left(\mathbf{M}^{\prime},\mathbf{\Delta}_{k}\right)\right]$
	is $\nu_{t}$-Lipschitz in $\mathbf{\Delta}_{k}.$ Further note that
	$\mathbf{\Delta}_{k}\sim\mathcal{D}_{k}\left(\mathbf{M}_{1}\right)$
	and $\mathbf{\Delta}_{k}^{\prime}\sim\mathcal{D}_{k}\left(\mathbf{M}_{2}\right)$,
	which are both $\varepsilon_{k}$-sensitive. Applying Lemma \ref{Lemma: Kantorovich-Rubinstein },
	it follows that 
	\begin{align}
		& \mathrm{Tr}\left(V^{\top}\mathbb{E}\left[\nabla_{\mathbf{\mathbf{M}^{\prime}}}c_{t}\left(\mathbf{M}^{\prime},\mathbf{\Delta}_{k}\right)\right]-V^{\top}\mathbb{E}\left[\nabla_{\mathbf{\mathbf{M}^{\prime}}}c_{t}\left(\mathbf{M}^{\prime},\mathbf{\Delta}_{k}^{\prime}\right)\right]\right)\leq\nu_{t}\varepsilon_{k}\left\Vert \mathbf{M}_{1}-\mathbf{M}_{2}\right\Vert _{F}.\label{eq:Smooth Average per stage cost -2}
	\end{align}
	
	Substitute (\ref{eq:Smooth Average per stage cost -2}) into (\ref{eq:Smooth Average per stage cost -1}),
	it follows that 
	\begin{align}
		& \left\Vert \mathbb{E}\left[\nabla_{\mathbf{\mathbf{M}^{\prime}}}c_{t}\left(\mathbf{M}^{\prime},\mathbf{\Delta}_{k}\right)-\nabla_{\mathbf{\mathbf{M}^{\prime}}}c_{t}\left(\mathbf{M}^{\prime},\mathbf{\Delta}_{k}^{\prime}\right)\right]\right\Vert _{F}\leq\nu_{t}\varepsilon_{k}\left\Vert \mathbf{M}_{1}-\mathbf{M}_{2}\right\Vert _{F}.
	\end{align}
	
	Using the similar approach, we can obtain 
	\begin{align}
		& \left\Vert \mathbb{E}\left[\nabla_{\mathbf{M}}c_{t}\left(\mathbf{M};\left\{ \mathbf{\Delta}_{i}\right\} _{0\leq i<t}\right)-\nabla_{\mathbf{M}^{\prime}}c_{t}\left(\mathbf{M}^{\prime},\left\{ \mathbf{\Delta}_{i}\right\} _{0\leq i<t}\right)\right]\right\Vert \leq\lambda_{t}\left\Vert \mathbf{M}-\mathbf{M}^{\prime}\right\Vert _{F};\label{eq:expanded average grad diff norm 1}\\
		& \left\Vert \mathbb{E}\left[\nabla_{\mathbf{M}^{\prime}}c_{t}\left(\mathbf{M}^{\prime},\left\{ \mathbf{\Delta}_{i}\right\} _{0\leq i<t}\right)-\nabla_{\mathbf{M}^{\prime}}c_{t}\left(\mathbf{M}^{\prime},\mathbf{A}_{1}^{\prime},\left\{ \mathbf{\Delta}_{i}\right\} _{1\leq i<t}\right)\right]\right\Vert \leq\nu_{t}\varepsilon_{1}\left\Vert \mathbf{M}_{1}-\mathbf{M}_{2}\right\Vert _{F};\label{eq:expanded average grad diff norm 2}\\
		& \vdots\nonumber \\
		& \left\Vert \mathbb{E}\left[\nabla_{\mathbf{M}^{\prime}}c_{t}\left(\mathbf{M}^{\prime},\left\{ \mathbf{\Delta}_{i}^{\prime}\right\} _{0\leq i<t-1},\mathbf{\Delta}_{t-1}\right)-\nabla_{\mathbf{M}^{\prime}}c_{t}\left(\mathbf{M}^{\prime},\left\{ \mathbf{\Delta}_{i}^{\prime}\right\} _{0\leq i<t-1}\right)\right]\right\Vert \nonumber\\
		&\leq\nu_{t}\varepsilon_{t-1}\left\Vert \mathbf{M}_{1}-\mathbf{M}_{2}\right\Vert _{F}.\label{eq:expanded average grad diff norm 3}
	\end{align}
	
	Combining (\ref{eq: expanded average grad diff norm}), (\ref{eq:expanded average grad diff norm 1})-(\ref{eq:expanded average grad diff norm 3}),
	it follow inequality (\ref{eq:average grad diff2}).
\end{proof}

\section{Properties of the Total Cost Function}
We define a total cost function $J_{T}$ as
\begin{align}
	& J_{T}\left(\mathbf{M};\left\{ \mathbf{\Delta}_{t}\right\} _{0\leq t<T}\right)=\sum_{t=0}^{T}c_{t}\left(\mathbf{M};\left\{ \mathbf{\Delta}_{i}\right\} _{0\leq i<t}\right).
\end{align}
It is clear that 
\begin{align}
	& C_{T}\left(\mathbf{M};\mathbf{M}_{1}\right)=\mathbb{E}_{\mathbf{x}_{0},\left\{ \mathbf{\Delta}_{t}\sim\mathcal{D}_{t}\left(\mathbf{M}_{1}\right)\right\} _{0\leq i<T},\left\{ \mathbf{w}_{i}\right\} _{0\leq i<T}}J_{T}\left(\mathbf{M};\left\{ \mathbf{\Delta}_{t}\right\} _{0\leq t<T}\right).
\end{align}

We next analyze the gradient properties and the smoothness of the
total cost function $J_{T}\left(\mathbf{M};\left\{ \mathbf{\Delta}_{t}\right\} _{0\leq t<T}\right)$.
We again note that $\nabla J_{T}\left(\mathbf{M};\left\{ \mathbf{\Delta}_{t}\right\} _{0\leq t<T}\right)$
denotes the gradient taken w.r.t. the first argument $\mathbf{M}$.
\begin{lemma}
	\label{lemma Gradient-Variance total cost}The
	variance of the gradient of the total cost function satisfies
	\begin{align}
		& \mathbb{E}_{\mathbf{x}_{0},\left\{ \mathbf{\Delta}_{t}\sim\mathcal{D}_{t}\left(\mathbf{M}\right)\right\} _{0\leq i<T},\left\{ \mathbf{w}_{i}\right\} _{0\leq i<T}}\left[\left\Vert \nabla J_{T}\left(\mathbf{M};\left\{ \mathbf{\Delta}_{t}\right\} _{0\leq t<T}\right)-\nabla C_{T}\left(\mathbf{M};\mathbf{M}\right)\right\Vert _{F}^{2}\right]\leq T\sum_{t=1}^{T}\vartheta_{t}^{2}.\label{eq:var-grad-M total cost}
	\end{align}
\end{lemma}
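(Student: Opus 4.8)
The plan is to exploit the additive structure of the total cost and reduce the claim to the per-stage second-moment bound already established in Lemma \ref{lemma Gradient-Variance per stage}. Since $J_{T}(\mathbf{M};\{\mathbf{\Delta}_t\})=\sum_{t=0}^{T}c_{t}(\mathbf{M};\{\mathbf{\Delta}_i\}_{0\le i<t})$, differentiating with respect to the first argument gives $\nabla J_{T}=\sum_{t=0}^{T}\nabla_{\mathbf{M}}c_{t}$. Because each per-stage gradient admits the deterministic norm bound derived inside the proof of Lemma \ref{lemma Gradient-Variance per stage}, the gradient and the expectation may be interchanged, so that $\nabla C_{T}(\mathbf{M};\mathbf{M})=\mathbb{E}[\nabla J_{T}]=\sum_{t=0}^{T}\mathbb{E}[\nabla_{\mathbf{M}}c_{t}]$, where the distributions are frozen at the (fixed) second argument during differentiation. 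First I would observe that the $t=0$ term contributes nothing: since $\mathbf{w}_i=\mathbf{0}$ for $i<0$ we have $[\mathbf{w}]_{-1}^{H}=\mathbf{0}$, hence $\mathbf{u}_0^{(\mathbf{M})}=-\mathbf{K}\mathbf{x}_0$ is independent of $\mathbf{M}$ and $\nabla_{\mathbf{M}}c_0=\mathbf{0}$. Thus the centered gradient is a genuine sum over $t=1,\dots,T$:
\begin{align*}
\nabla J_{T}-\nabla C_{T}(\mathbf{M};\mathbf{M})=\sum_{t=1}^{T}\left(\nabla_{\mathbf{M}}c_{t}-\mathbb{E}[\nabla_{\mathbf{M}}c_{t}]\right).
\end{align*}

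Next I would bound the squared Frobenius norm of this sum by the elementary Cauchy--Schwarz inequality $\|\sum_{t=1}^{T}\mathbf{V}_t\|_F^2\le T\sum_{t=1}^{T}\|\mathbf{V}_t\|_F^2$, valid for arbitrary matrices $\mathbf{V}_t$ (it follows from pairing with the all-ones vector). Applying it to $\mathbf{V}_t=\nabla_{\mathbf{M}}c_{t}-\mathbb{E}[\nabla_{\mathbf{M}}c_{t}]$, then taking expectation and using linearity, yields
\begin{align*}
\mathbb{E}\left[\left\Vert\nabla J_{T}-\nabla C_{T}(\mathbf{M};\mathbf{M})\right\Vert_F^2\right]\le T\sum_{t=1}^{T}\mathbb{E}\left[\left\Vert\nabla_{\mathbf{M}}c_{t}-\mathbb{E}[\nabla_{\mathbf{M}}c_{t}]\right\Vert_F^2\right].
\end{align*}
It then remains to invoke the per-stage variance bound of Lemma \ref{lemma Gradient-Variance per stage}, which controls each summand by $\vartheta_t^2$ (the square of the deterministic per-stage gradient-norm bound $\vartheta_t$ appearing in Theorem \ref{Thm: Convergence RSGD}), giving exactly $T\sum_{t=1}^{T}\vartheta_t^2$.

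The main subtlety, and the reason for the factor $T$, is that the per-stage gradients $\{\nabla_{\mathbf{M}}c_{t}\}_{t}$ are \emph{not} mutually independent: the state $\mathbf{x}_t^{(\mathbf{M})}$ at each time step is built from the same realizations of the disturbances $\{\mathbf{\Delta}_i\}$ and $\{\mathbf{w}_i\}$ (cf. \eqref{eq:x_t representation}), so the centered summands are correlated and the cross terms $\mathbb{E}[\mathrm{Tr}(\mathbf{V}_{t_1}^{\top}\mathbf{V}_{t_2})]$ need not vanish. Consequently one cannot appeal to the tighter ``sum of variances'' identity and must pay the worst-case Cauchy--Schwarz factor $T$. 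I expect the only genuine care required is (i) keeping the $\vartheta_t$ bookkeeping consistent between Lemma \ref{lemma Gradient-Variance per stage} and Theorem \ref{Thm: Convergence RSGD}, and (ii) the routine justification of differentiating under the expectation; both are handled by the uniform boundedness of the per-stage gradients established earlier.
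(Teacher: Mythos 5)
Your proof is correct and follows essentially the same route as the paper's: observe that the $t=0$ gradient vanishes, write the centered total gradient as the sum of $T$ centered per-stage gradients, apply the Cauchy--Schwarz bound $\|\sum_{t=1}^{T}\mathbf{V}_t\|_F^2\le T\sum_{t=1}^{T}\|\mathbf{V}_t\|_F^2$, and invoke the per-stage variance bound of Lemma \ref{lemma Gradient-Variance per stage}. Your added remarks on the non-independence of the per-stage gradients (hence the unavoidable factor $T$) and on the $\vartheta_t$ versus $\vartheta_t^2$ bookkeeping are apt, the latter being a genuine inconsistency in the paper's own statement of the per-stage lemma rather than a flaw in your argument.
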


\begin{proof}
	Note that $\nabla J_{0}\left(\mathbf{M};\mathbf{\Delta}_{0}\right)=\nabla_{\mathbf{M}}c_{0}\left(\mathbf{x}_{0},\mathbf{u}_{0}\right)=\nabla_{\mathbf{M}}c_{0}\left(\mathbf{x}_{0},-\mathbf{K}\mathbf{x}_{0}\right)=\mathbf{0}.$
	Inequality (\ref{eq:var-grad-M total cost}) then follows directly
	from the norm triangular inequality:
	\begin{align}
		& \mathbb{E}\left[\left\Vert \nabla J_{T}\left(\mathbf{M};\left\{ \mathbf{\Delta}_{t}\right\} _{0\leq t<T}\right)-\mathbb{E}\left[\nabla J_{T}\left(\mathbf{M};\left\{ \mathbf{\Delta}_{t}\right\} _{0\leq t<T}\right)\right]\right\Vert _{F}^{2}\right]\label{eq: var total cost}\\
		& \leq T\sum_{t=1}^{T}\mathbb{E}_{\mathbf{x}_{0},\left\{ \mathbf{\Delta}_{i}\sim\mathcal{D}_{i}\left(\mathbf{M}\right)\right\} _{0\leq i<t},\left\{ \mathbf{w}_{i}\right\} _{0\leq i<t}}\left[\left\Vert \nabla_{\mathbf{M}}c_{t}\left(\mathbf{x}_{t}^{\left(\mathbf{M}\right)},\mathbf{u}_{t}^{\left(\mathbf{M}\right)}\right)\right.\right\Vert \nonumber \\
		& -\left.\left.\mathbb{E}_{\mathbf{x}_{0},\left\{ \mathbf{\Delta}_{i}\sim\mathcal{D}_{i}\left(\mathbf{M}\right)\right\} _{0\leq i<t},\left\{ \mathbf{w}_{i}\right\} _{0\leq i<t}}\left[\nabla_{\mathbf{M}}c_{t}\left(\mathbf{x}_{t}^{\left(\mathbf{M}\right)},\mathbf{u}_{t}^{\left(\mathbf{M}\right)}\right)\right]\right\Vert _{F}^{2}\right]\nonumber \\
		& \overset{\left(\ref{eq: var total cost}.a\right)}{\leq}T\sum_{t=1}^{T}\vartheta_{t}^{2},\nonumber 
	\end{align}
	where inequality $\left(\ref{eq: var total cost}.a\right)$ holds
	becasue of Lemma \ref{lemma Gradient-Variance per stage}.
\end{proof}

\section{Proof of Lemma \ref{s_cvx_t}}
Denote $f_{t}\left(\mathbf{M};\mathbf{M}_{1}\right)=\mathbb{E}_{\mathbf{x}_{0},\left\{ \mathbf{\Delta}_{i}\sim\mathcal{D}_{i}\left(\mathbf{M}_{1}\right)\right\} _{0\leq i<t},\left\{ \mathbf{w}_{i}\right\} _{0\leq i<t}}\left[c_{t}\left(\mathbf{M};\left\{ \mathbf{\Delta}_{i}\right\} _{0\leq i<t}\right)\right]$
	as the per stage expected cost with the distribution of policy-dependent
	perturbation is shifted from $\mathbf{\Delta}_{i}\sim\mathcal{D}_{i}\left(\mathbf{M}\right)$
	to $\mathbf{\Delta}_{i}\sim\mathcal{D}_{i}\left(\mathbf{M}_{1}\right)$,
	$\forall0\leq i<t.$
	
	From the convexity of per stage expected cost in  {Lemma \ref{p_stage_a_cost}}, we know that, $\forall1\leq t_{1}<H,$ $f_{t_{1}}\left(\mathbf{M};\mathbf{M}_{1}\right)$
	is a convex function of $\mathbf{M}$. It follows that 
	\begin{align}
		f_{t_{1}}\left(\mathbf{M};\mathbf{M}_{1}\right)\geq & f_{t_{1}}\left(\mathbf{M}^{\prime};\mathbf{M}_{1}\right)+\mathrm{Tr}\left(\left(\nabla f_{t_{1}}\left(\mathbf{M}^{\prime};\mathbf{M}_{1}\right)\right)^{\top}\left(\mathbf{M}-\mathbf{M}^{\prime}\right)\right),\forall1\leq t_{1}<H,\label{eq: convex per stage 1<t<H}
	\end{align}
	where the gradient $\nabla f_{t_{1}}\left(\mathbf{M}^{\prime};\mathbf{M}_{1}\right)$
	is taken w.r.t. the first argument $\mathbf{M}^{\prime}$.
	
	From stong convexity of per stage expected cost in  {Lemma \ref{s_cvx}}, we know that, $\forall H\leq t_{2}\leq T,$ $f_{t_{2}}\left(\mathbf{M};\mathbf{M}_{1}\right)$
	is  {$\min\left\{ \frac{\mu\sigma^{2}}{2},\frac{\mu\gamma^{2}\sigma^{2}}{64\kappa^{10}}\right\} $}-strongly
	convex function of $\mathbf{M}$. Therefore, it follows
	\begin{align}
		f_{t_{2}}\left(\mathbf{M};\mathbf{M}_{1}\right)\geq & f_{t_{2}}\left(\mathbf{M}^{\prime};\mathbf{M}_{1}\right)+\mathrm{Tr}\left(\left(\nabla f_{t_{2}}\left(\mathbf{M}^{\prime};\mathbf{M}_{1}\right)\right)^{\top}\left(\mathbf{M}-\mathbf{M}^{\prime}\right)\right)\label{eq: scvx per stage H<t<T}\\
		& +{\min\left\{ \frac{\mu\sigma^{2}}{2},\frac{\mu\gamma^{2}\sigma^{2}}{64\kappa^{10}}\right\} }\left\Vert \mathbf{M}-\mathbf{M}^{\prime}\right\Vert _{F}^{2},\forall H\leq t_{2}\leq T.\nonumber 
	\end{align}
	
	Sum up (\ref{eq: convex per stage 1<t<H}) from $t_{1}=1$ to $t_{1}=H-1$
	and (\ref{eq: scvx per stage H<t<T}) from $t_{2}=H$ to $t_{2}=T$,
	we obatin 
	\begin{align}
		& \sum_{t_{1}=1}^{H-1}f_{t_{1}}\left(\mathbf{M};\mathbf{M}_{1}\right)+\sum_{t_{2}=H}^{T}f_{t_{1}}\left(\mathbf{M};\mathbf{M}_{1}\right)\geq\sum_{t_{1}=1}^{H-1}f_{t_{1}}\left(\mathbf{M}^{\prime};\mathbf{M}_{1}\right)+\sum_{t_{2}=H}^{T}f_{t_{1}}\left(\mathbf{M}^{\prime};\mathbf{M}_{1}\right)\\
		& +\mathrm{Tr}\left(\left(\nabla\left(\sum_{t_{1}=1}^{H-1}f_{t_{1}}\left(\mathbf{M}^{\prime};\mathbf{M}_{1}\right)+\sum_{t_{2}=H}^{T}f_{t_{1}}\left(\mathbf{M}^{\prime};\mathbf{M}_{1}\right)\right)\right)^{\top}\left(\mathbf{M}-\mathbf{M}^{\prime}\right)\right)\nonumber \\
		& +\frac{\widetilde{\mu}}{2}\left\Vert \mathbf{M}-\mathbf{M}^{\prime}\right\Vert _{F}^{2}.\nonumber 
	\end{align}
	
	Inequality (\ref{scvx_eq}) follows directly
	by noting that 
	\begin{align}
		& C_{T}\left(\mathbf{M};\mathbf{M}_{1}\right)=\sum_{t_{1}=1}^{H-1}f_{t_{1}}\left(\mathbf{M};\mathbf{M}_{1}\right)+\sum_{t_{2}=H}^{T}f_{t_{1}}\left(\mathbf{M};\mathbf{M}_{1}\right),\\
		& \nabla C_{T}\left(\mathbf{M}^{\prime};\mathbf{M}_{1}\right)=\nabla\left(\sum_{t_{1}=1}^{H-1}f_{t_{1}}\left(\mathbf{M}^{\prime};\mathbf{M}_{1}\right)+\sum_{t_{2}=H}^{T}f_{t_{1}}\left(\mathbf{M}^{\prime};\mathbf{M}_{1}\right)\right).
	\end{align}

\section{Proof of Lemma \ref{smooth}}
	Note that $\nabla C_{0}\left(\mathbf{M};\mathbf{M}_{1}\right)=\nabla_{\mathbf{M}}\mathbb{E}\left[c_{t}\left(\mathbf{x}_{0},\mathbf{u}_{0}\right)\right]=\nabla_{\mathbf{M}}\mathbb{E}\left[c_{t}\left(\mathbf{x}_{0},-\mathbf{K}\mathbf{x}_{0}\right)\right]=\mathbf{0}.$
	Therefore, inequality (\ref{eq:average grad diff}) follows
	directly by combining the norm triangular inequalityand the results
	in Lemma \ref{Lemma: Smoothness per Stage Cost}. 
	\begin{align}
		& \left\Vert \nabla C_{T}\left(\mathbf{M};\mathbf{M}_{1}\right)-\nabla C_{T}\left(\mathbf{M}^{\prime};\mathbf{M}_{2}\right)\right\Vert _{F}\\
		& \leq\sum_{t=1}^{T}\left\Vert \mathbb{E}_{\mathbf{x}_{0},\left\{ \mathbf{\Delta}_{i}\sim\mathcal{D}_{i}\left(\mathbf{M}_{1}\right)\right\} _{0\leq i<t},\left\{ \mathbf{w}_{i}\right\} _{0\leq i<t}}\left[\nabla_{\mathbf{M}}c_{t}\left(\mathbf{M};\left\{ \mathbf{\Delta}_{i}\right\} _{0\leq i<t}\right)\right]\right.\nonumber \\
		& \left.-\mathbb{E}_{\mathbf{x}_{0},\left\{ \mathbf{\Delta}_{i}^{\prime}\sim\mathcal{D}_{i}\left(\mathbf{M}_{2}\right)\right\} _{0\leq i<t},\left\{ \mathbf{w}_{i}\right\} _{0\leq i<t}}\left[\nabla_{\mathbf{M}^{\prime}}c_{t}\left(\mathbf{M}^{\prime};\left\{ \mathbf{\Delta}_{i}^{\prime}\right\} _{0\leq i<t}\right)\right]\right\Vert _{F}\nonumber \\
		& \leq\left(\sum_{t=1}^{T}\lambda_{t}\right)\left\Vert \mathbf{M}-\mathbf{M}^{\prime}\right\Vert _{F}+\sum_{t=1}^{T}\left(\nu_{t}\sum_{i=0}^{t-1}\varepsilon_{i}\right)\left\Vert \mathbf{M}_{1}-\mathbf{M}_{2}\right\Vert _{F}\nonumber \\
		& =\left(\sum_{t=1}^{T}\lambda_{t}\right)\left\Vert \mathbf{M}-\mathbf{M}^{\prime}\right\Vert _{F}+\sum_{t=0}^{T-1}\varepsilon_{t}\left(\sum_{i=t+1}^{T}\nu_{i}\right)\left\Vert \mathbf{M}_{1}-\mathbf{M}_{2}\right\Vert _{F}.\nonumber 
	\end{align}

\section{Proof of Lemma 	\ref{Lemma: Existence Uniqueness M^PS}}
	Fix any $\mathbf{M},\mathbf{M}^{\prime}\in\mathcal{\mathbb{M}},$
	the optimality condition to (\ref{eq:RRM}) implies that 
	\begin{align}
		& \nabla C_{T}\left(\Phi\left(\mathbf{M}\right);\mathbf{M}\right)=\mathbf{0},\nabla C_{T}\left(\Phi\left(\mathbf{M}^{\prime}\right);\mathbf{M}^{\prime}\right)=\mathbf{0},
	\end{align}
	where the gradients are again taken w.r.t. the first argument in the
	function $C_{T}\left(\cdot,\cdot\right)$. Observe the fact
	\begin{align}
		 0&=\mathrm{Tr}\left(\mathbf{0}^{T}\cdot\left(\Phi\left(\mathbf{M}\right)-\Phi\left(\mathbf{M}^{\prime}\right)\right)\right)\nonumber\\
		&=\mathrm{Tr}\left(\left(\nabla C_{T}\left(\Phi\left(\mathbf{M}\right);\mathbf{M}\right)-\nabla C_{T}\left(\Phi\left(\mathbf{M}^{\prime}\right);\mathbf{M}^{\prime}\right)\right)\cdot\left(\Phi\left(\mathbf{M}\right)-\Phi\left(\mathbf{M}^{\prime}\right)\right)\right).
	\end{align}
	
	Adding and subtracting the term $\nabla C_{T}\left(\Phi\left(\mathbf{M}\right);\mathbf{M}^{\prime}\right)$
	implies the equality
	\begin{align}
		& \mathrm{Tr}\left(\left(\nabla C_{T}\left(\Phi\left(\mathbf{M}\right);\mathbf{M}^{\prime}\right)-\nabla C_{T}\left(\Phi\left(\mathbf{M}\right);\mathbf{M}\right)\right)\cdot\left(\Phi\left(\mathbf{M}\right)-\Phi\left(\mathbf{M}^{\prime}\right)\right)\right)\label{eq: grad F_T diff}\\
		& =\mathrm{Tr}\left(\left(\nabla C_{T}\left(\Phi\left(\mathbf{M}\right);\mathbf{M}^{\prime}\right)-\nabla C_{T}\left(\Phi\left(\mathbf{M}^{\prime}\right);\mathbf{M}^{\prime}\right)\right)\cdot\left(\Phi\left(\mathbf{M}\right)-\Phi\left(\mathbf{M}^{\prime}\right)\right)\right).\nonumber 
	\end{align}
	
	Recall that $C_{T}\left(\Phi\left(\mathbf{M}\right);\mathbf{M}^{\prime}\right)$
	is $\widetilde{\mu}$-strongly convex w.r.t. $\Phi\left(\mathbf{M}\right)$,
	it follows 
	\begin{alignat}{1}
		& \mathrm{Tr}\left(\left(\nabla C_{T}\left(\Phi\left(\mathbf{M}\right);\mathbf{M}^{\prime}\right)-\nabla C_{T}\left(\Phi\left(\mathbf{M}^{\prime}\right);\mathbf{M}^{\prime}\right)\right)\cdot\left(\Phi\left(\mathbf{M}\right)-\Phi\left(\mathbf{M}^{\prime}\right)\right)\right)\label{eq:grad F_T diff 1}\\
		& \geq\widetilde{\mu}\left\Vert \Phi\left(\mathbf{M}\right)-\Phi\left(\mathbf{M}^{\prime}\right)\right\Vert _{F}^{2}.\nonumber 
	\end{alignat}
	
	Meanwhile, applying Lemma \ref{smooth}
	to the left hand side of (\ref{eq: grad F_T diff}) gives
	\begin{align}
		& \mathrm{Tr}\left(\left(\nabla C_{T}\left(\Phi\left(\mathbf{M}\right);\mathbf{M}^{\prime}\right)-\nabla C_{T}\left(\Phi\left(\mathbf{M}\right);\mathbf{M}\right)\right)\cdot\left(\Phi\left(\mathbf{M}\right)-\Phi\left(\mathbf{M}^{\prime}\right)\right)\right)\label{eq:grad F_T diff 2}\\
		& \leq\sum_{t=0}^{T-1}\left(\varepsilon_{t}\sum_{i=t+1}^{T}\nu_{i}\right)\left\Vert \mathbf{M}-\mathbf{M}^{\prime}\right\Vert _{F}\left\Vert \Phi\left(\mathbf{M}\right)-\Phi\left(\mathbf{M}^{\prime}\right)\right\Vert _{F}.\nonumber 
	\end{align}
	
	Combining (\ref{eq: grad F_T diff}), (\ref{eq:grad F_T diff 1})
	and (\ref{eq:grad F_T diff 2}), it follows
	\begin{align}\label{102}
		& \left\Vert \Phi\left(\mathbf{M}\right)-\Phi\left(\mathbf{M}^{\prime}\right)\right\Vert _{F}\leq\frac{\sum_{t=0}^{T-1}\left(\varepsilon_{t}\sum_{i=t+1}^{T}\nu_{i}\right)}{\widetilde{\mu}}\left\Vert \mathbf{M}-\mathbf{M}^{\prime}\right\Vert _{F}.
	\end{align}
	We note that $\mathbf{M}_{n+1}=\Phi\left(\mathbf{M}_{n}\right)$
	by the definition of (\ref{eq:RRM}), and $\mathbf{M}^{PS}=\Phi\left(\mathbf{M}^{PS}\right)$
	by the definition of performative stability. Applying \eqref{102} yields
	\begin{align}
		& \left\Vert \mathbf{M}_{n}-\mathbf{M}^{PS}\right\Vert _{F}=\left\Vert \Phi\left(\mathbf{M}_{n-1}\right)-\Phi\left(\mathbf{M}^{PS}\right)\right\Vert _{F}\leq\frac{\sum_{t=0}^{T-1}\left(\varepsilon_{t}\sum_{i=t+1}^{T}\nu_{i}\right)}{\widetilde{\mu}}\left\Vert \mathbf{M}_{l-1}-\mathbf{M}^{PS}\right\Vert _{F}\\
		& \leq\left(\frac{\sum_{t=0}^{T-1}\left(\varepsilon_{t}\sum_{i=t+1}^{T}\nu_{i}\right)}{\widetilde{\mu}}\right)^{n}\left\Vert \mathbf{M}_{0}-\mathbf{M}^{PS}\right\Vert _{F}.\nonumber 
	\end{align}
	Setting $\left(\frac{\sum_{t=0}^{T-1}\left(\varepsilon_{t}\sum_{i=t+1}^{T}\nu_{i}\right)}{\widetilde{\mu}}\right)^{n}\left\Vert \mathbf{M}_{0}-\mathbf{M}^{PS}\right\Vert _{F}$
	to be at most $\rho$ and solving for $n$ completes the proof.

\section{Convergence Analysis of Algorithm 1}
\subsection{Non-asymptomatic Convergence Analysis for Algorithm 1 with General Step Sizes}\label{G.1}
We have the following Lemma \ref{Thm: Convergence RSGD appendix_lem} regarding the convergence results of the proposed RSGD in Algorithm 1.

\begin{lem}
	\label{Thm: Convergence RSGD appendix_lem} Under A\ref{Assumption noises wt}-A\ref{A6}. Consider a sequence of non-negative step sizes $\left\{ \eta_{n},n\geq0\right\} $
	satisfy 
	\begin{align}\label{lem1_them}
		\sup_{n\geq0}\eta_{n} & \leq\mathrm{min}\left\{ \frac{\widetilde{\mu}-\sum_{t=0}^{T-1}\left(\varepsilon_{t}\sum_{i=t+1}^{T}\nu_{i}\right)}{2\left(\sum_{t=1}^{T}\lambda_{t}+\sum_{t=0}^{T-1}\left(\varepsilon_{t}\sum_{i=t+1}^{T}\nu_{i}\right)\right)^{2}},\frac{2}{\widetilde{\mu}-\sum_{t=0}^{T-1}\left(\varepsilon_{t}\sum_{i=t+1}^{T}\nu_{i}\right)}\right\} ,
	\end{align}
	and
   \begin{align}\label{lem2_them}
   	\frac{\eta_{n}}{\eta_{n+1}}\leq\left(1+\frac{1}{2}\left(\widetilde{\mu}-\sum_{t=0}^{T-1}\left(\varepsilon_{t}\sum_{i=t+1}^{T}\nu_{i}\right)\right)\eta_{n+1}\right),
   \end{align}
	for any $n\geq0$. Then, the iterates generated by RSGD admit the
	following bound for any $N\geq1$:
	\begin{align}
		\mathbb{E}\left[\left\Vert \mathbf{M}_{N}-\mathbf{M}^{PS}\right\Vert _{F}^{2}\right]\leq & \prod_{n=0}^{N-1}\left(1-\eta_{n}\left(\widetilde{\mu}-\sum_{t=0}^{T-1}\left(\varepsilon_{t}\sum_{i=t+1}^{T}\nu_{i}\right)\right)\right)\mathbb{E}\left[\left\Vert \mathbf{M}_{0}-\mathbf{M}^{PS}\right\Vert _{F}^{2}\right]\label{eq: RSGD error appendix_lem}\\
		& +\frac{4\eta_{N-1}T\sum_{t=1}^{T}\vartheta_{t}^{2}}{\widetilde{\mu}-\sum_{t=0}^{T-1}\left(\varepsilon_{t}\sum_{i=t+1}^{T}\nu_{i}\right)},\nonumber 
	\end{align}
	where $\vartheta_{t}=\kappa^{3}G\left(\left(HW+\kappa^{2}\right)\kappa\left\Vert \mathbf{B}\right\Vert \beta_{t}+1\right)\left(x_{0}\alpha_{t}+c_{3}\beta_{t}\right)+GHWM\left(\kappa^{3}\beta_{t}+1\right),\forall1\leq t\leq T.$
\end{lem}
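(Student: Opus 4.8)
The plan is to run a standard stochastic-approximation contraction argument, but with the ordinary strong monotonicity of a gradient map replaced by a \emph{performative} one-point monotonicity anchored at the fixed point $\mathbf{M}^{PS}$. Write $g_n := \nabla J_T(\mathbf{M}_n; \{\mathbf{\Delta}_t\}_{0\le t<T})$ for the stochastic gradient computed in Algorithm~1, where $\mathbf{\Delta}_t \sim \mathcal{D}_t(\mathbf{M}_n)$, so that conditioned on $\mathbf{M}_n$ we have $\mathbb{E}[g_n\mid\mathbf{M}_n]=\nabla C_T(\mathbf{M}_n;\mathbf{M}_n)$. First I would use that $\mathbf{M}^{PS}\in\mathbb{M}$ together with the non-expansiveness of the Frobenius projection to drop the projection, and expand
\begin{align*}
\|\mathbf{M}_{n+1}-\mathbf{M}^{PS}\|_F^2 \le \|\mathbf{M}_n-\mathbf{M}^{PS}\|_F^2 - 2\eta_n\,\mathrm{Tr}\!\big(g_n^\top(\mathbf{M}_n-\mathbf{M}^{PS})\big) + \eta_n^2\|g_n\|_F^2 .
\end{align*}
Taking conditional expectation replaces $g_n$ by $\nabla C_T(\mathbf{M}_n;\mathbf{M}_n)$ in the cross term and $\|g_n\|_F^2$ by its conditional second moment.

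The crux is to lower bound the cross term $\mathrm{Tr}\!\big(\nabla C_T(\mathbf{M}_n;\mathbf{M}_n)^\top(\mathbf{M}_n-\mathbf{M}^{PS})\big)$. Here I would exploit the first-order optimality $\nabla C_T(\mathbf{M}^{PS};\mathbf{M}^{PS})=\mathbf{0}$ (from the fixed-point definition of $\Phi$) and insert the pivot $\nabla C_T(\mathbf{M}^{PS};\mathbf{M}_n)$. Strong convexity in the first argument (Lemma~\ref{s_cvx_t}) controls $\nabla C_T(\mathbf{M}_n;\mathbf{M}_n)-\nabla C_T(\mathbf{M}^{PS};\mathbf{M}_n)$, contributing $\widetilde{\mu}\|\mathbf{M}_n-\mathbf{M}^{PS}\|_F^2$, while the second-argument (distribution-shift) Lipschitz bound of Lemma~\ref{smooth}, applied with both first arguments equal to $\mathbf{M}^{PS}$, controls $\nabla C_T(\mathbf{M}^{PS};\mathbf{M}_n)-\nabla C_T(\mathbf{M}^{PS};\mathbf{M}^{PS})$, costing at most $\big(\sum_{t=0}^{T-1}\varepsilon_t\sum_{i=t+1}^T\nu_i\big)\|\mathbf{M}_n-\mathbf{M}^{PS}\|_F^2$ after Cauchy--Schwarz. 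The net effect is the one-point inequality $\mathrm{Tr}(\nabla C_T(\mathbf{M}_n;\mathbf{M}_n)^\top(\mathbf{M}_n-\mathbf{M}^{PS}))\ge \mu_\star\|\mathbf{M}_n-\mathbf{M}^{PS}\|_F^2$, with $\mu_\star := \widetilde{\mu}-\sum_{t=0}^{T-1}\varepsilon_t\sum_{i=t+1}^T\nu_i>0$ guaranteed by the sufficient condition \eqref{eq:suff condition M^PS}. This is the main obstacle: the map $\mathbf{M}\mapsto\nabla C_T(\mathbf{M};\mathbf{M})$ is not the gradient of any single function, so its monotonicity must be synthesized from the two lemmas in exactly this split.

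For the second-moment term I would use the bias--variance decomposition $\mathbb{E}[\|g_n\|_F^2\mid\mathbf{M}_n]=\|\nabla C_T(\mathbf{M}_n;\mathbf{M}_n)\|_F^2+\mathbb{E}[\|g_n-\nabla C_T(\mathbf{M}_n;\mathbf{M}_n)\|_F^2\mid\mathbf{M}_n]$; the variance is bounded by $T\sum_{t=1}^T\vartheta_t^2$ via Lemma~\ref{lemma Gradient-Variance total cost}, and the squared bias by $L^2\|\mathbf{M}_n-\mathbf{M}^{PS}\|_F^2$ with $L:=\sum_{t=1}^T\lambda_t+\sum_{t=0}^{T-1}\varepsilon_t\sum_{i=t+1}^T\nu_i$, again from Lemma~\ref{smooth} (now varying both arguments) together with $\nabla C_T(\mathbf{M}^{PS};\mathbf{M}^{PS})=\mathbf{0}$. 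Assembling, the conditional recursion reads
\begin{align*}
\mathbb{E}[\|\mathbf{M}_{n+1}-\mathbf{M}^{PS}\|_F^2\mid\mathbf{M}_n] \le \big(1-2\eta_n\mu_\star+\eta_n^2L^2\big)\|\mathbf{M}_n-\mathbf{M}^{PS}\|_F^2 + \eta_n^2\,T\!\sum_{t=1}^T\vartheta_t^2,
\end{align*}
and the first branch of the minimum in \eqref{lem1_them}, namely $\eta_n\le \mu_\star/(2L^2)$, forces $\eta_n^2 L^2\le \eta_n\mu_\star$, whence the contraction factor is dominated by $1-\eta_n\mu_\star$.

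Finally I would take full expectations and unroll. Writing $b:=T\sum_{t=1}^T\vartheta_t^2$ and $a_n:=\mathbb{E}[\|\mathbf{M}_n-\mathbf{M}^{PS}\|_F^2]$, the bound $a_{n+1}\le(1-\eta_n\mu_\star)a_n+\eta_n^2 b$ unrolls to the claimed product term plus the accumulated noise $b\sum_{k=0}^{N-1}\eta_k^2\prod_{n=k+1}^{N-1}(1-\eta_n\mu_\star)$. To collapse this sum to $4\eta_{N-1}b/\mu_\star$ I would set $T_N:=\sum_{k=0}^{N-1}\eta_k^2\prod_{n=k+1}^{N-1}(1-\eta_n\mu_\star)$, note the recursion $T_N=\eta_{N-1}^2+(1-\eta_{N-1}\mu_\star)T_{N-1}$, and prove $T_N\le 4\eta_{N-1}/\mu_\star$ by induction: the base case $T_1=\eta_0^2\le 4\eta_0/\mu_\star$ uses $\eta_0\le 2/\mu_\star$ from the second branch of \eqref{lem1_them}, and the inductive step uses the ratio condition \eqref{lem2_them} in the form $\eta_{N-2}\le\eta_{N-1}(1+\tfrac12\mu_\star\eta_{N-1})$, which makes $(1-\eta_{N-1}\mu_\star)(1+\tfrac12\mu_\star\eta_{N-1})\le 1-\tfrac12\mu_\star\eta_{N-1}$ and yields $T_N\le \tfrac{4\eta_{N-1}}{\mu_\star}-\eta_{N-1}^2\le \tfrac{4\eta_{N-1}}{\mu_\star}$. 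Substituting $b$ and $\mu_\star$ then gives \eqref{eq: RSGD error appendix_lem} exactly; the only friction beyond the one-point monotonicity is bookkeeping the step-size ratio inequality so that the telescoping constant stays at $4$.
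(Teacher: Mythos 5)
Your proposal is correct and follows essentially the same route as the paper: drop the projection by non-expansiveness, split the cross term through an intermediate gradient to combine the strong convexity of Lemma~\ref{s_cvx_t} with the distribution-shift Lipschitz bound of Lemma~\ref{smooth}, bound the second moment via Lemma~\ref{lemma Gradient-Variance total cost}, and unroll the resulting recursion. The only (harmless) differences are that you pivot through $\nabla C_T(\mathbf{M}^{PS};\mathbf{M}_n)$ where the paper uses $\nabla C_T(\mathbf{M}_n;\mathbf{M}^{PS})$, and you prove the step-size summation bound $T_N\le 4\eta_{N-1}/\mu_\star$ by a self-contained induction where the paper invokes Lemma~6 of \cite{li2209multi} (your exact bias--variance decomposition saves a factor of $2$ that the paper instead recovers from that cited lemma, so both land on the same constant).
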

\begin{proof}
	Since projecting onto a convex set can only bring two iterates closer
	together, it follows that 
	\begin{align}
		& \mathbb{E}\left[\left\Vert \mathbf{M}_{n+1}-\mathbf{M}^{PS}\right\Vert _{F}^{2}\right]\leq\mathbb{E}\left[\left\Vert \mathbf{M}_{n}-\eta_{n}\nabla J_{T}\left(\mathbf{M}_{n};\left\{ \mathbf{\Delta}_{t}\right\} _{0\leq t<T}\right)-\mathbf{M}^{PS}\right\Vert ^{2}\right]\label{eq: RSGD M diff}\\
		& \leq I_{1}+I_{2}+I_{3},\nonumber 
	\end{align}
	where 
	\begin{align}
		& I_{1}=\mathbb{E}\left[\left\Vert \mathbf{M}_{n}-\mathbf{M}^{PS}\right\Vert _{F}^{2}\right],\\
		& I_{2}=-2\eta_{n}\mathbb{E}\left[\mathrm{Tr}\left(\left(\mathbf{M}_{n}-\mathbf{M}^{PS}\right)^{\top}\nabla J_{T}\left(\mathbf{M}_{n};\left\{ \mathbf{\Delta}_{t}\right\} _{0\leq t<T}\right)\right)\right],\\
		& I_{3}=\eta_{n}^{2}\mathbb{E}\left[\left\Vert \nabla J_{T}\left(\mathbf{M}_{n};\left\{ \mathbf{\Delta}_{t}\right\} _{0\leq t<T}\right)\right\Vert _{F}^{2}\right],
	\end{align}
	and $\mathbf{\Delta}_{t}\sim\mathcal{D}_{t}\left(\mathbf{M}_{n}\right),\forall0\leq t<T.$
	
	We first analyze the term $I_{2}.$ Let $\mathbb{E}_{n}\left[\cdot\right]$
	be the conditional expectation on $\mathbf{M}_{n}$. We have 
	\begin{align}
		& \mathbb{E}_{n}\left[\mathrm{Tr}\left(\left(\mathbf{M}_{n}-\mathbf{M}^{PS}\right)^{\top}\nabla J_{T}\left(\mathbf{M}_{n};\left\{ \mathbf{\Delta}_{t}\right\} _{0\leq t<T}\right)\right)\right]=\mathrm{Tr}\left(\left(\mathbf{M}_{n}-\mathbf{M}^{PS}\right)^{\top}\nabla C_{T}\left(\mathbf{M}_{n};\mathbf{M}_{n}\right)\right)\label{eq:RSGD term 2}\\
		& =\mathrm{Tr}\left(\left(\mathbf{M}_{n}-\mathbf{M}^{PS}\right)^{\top}\left(\nabla C_{T}\left(\mathbf{M}_{n};\mathbf{M}_{n}\right)-\nabla C_{T}\left(\mathbf{M}_{n};\mathbf{M}^{PS}\right)\right)\right)\nonumber \\
		& +\mathrm{Tr}\left(\left(\mathbf{M}_{n}-\mathbf{M}^{PS}\right)^{\top}\left(\nabla C_{T}\left(\mathbf{M}_{n};\mathbf{M}^{PS}\right)-\nabla C_{T}\left(\mathbf{M}^{PS};\mathbf{M}^{PS}\right)\right)\right),\nonumber 
	\end{align}
	where we use the fact that $\nabla C_{T}\left(\mathbf{M}^{PS};\mathbf{M}^{PS}\right)=\mathbf{0}$
	in the last equality of (\ref{eq:RSGD term 2}).
	
	Applying the Cauchy-Schwarz inequality and Lemma \ref{smooth},
	we obtain
	\begin{align}
		& \mathrm{Tr}\left(\left(\mathbf{M}_{n}-\mathbf{M}^{PS}\right)^{\top}\left(\nabla C_{T}\left(\mathbf{M}_{n};\mathbf{M}_{n}\right)-\nabla C_{T}\left(\mathbf{M}_{n};\mathbf{M}^{PS}\right)\right)\right)\label{eq:RSGD term 2-1}\\
		& \geq-\left\Vert \mathbf{M}_{n}-\mathbf{M}^{PS}\right\Vert _{F}\left(\sum_{t=0}^{T-1}\left(\varepsilon_{t}\sum_{i=t+1}^{T}\nu_{i}\right)\left\Vert \mathbf{M}_{n}-\mathbf{M}^{PS}\right\Vert _{F}\right)\nonumber \\
		& =-\sum_{t=0}^{T-1}\left(\varepsilon_{t}\sum_{i=t+1}^{T}\nu_{i}\right)\left\Vert \mathbf{M}_{n}-\mathbf{M}^{PS}\right\Vert _{F}^{2}.\nonumber 
	\end{align}
	
	Meanwhile, based on strongly convexity of $C_{T}$ in Lemma \ref{s_cvx_t},
	we have 
	\begin{align}
		& \mathrm{Tr}\left(\left(\mathbf{M}_{n}-\mathbf{M}^{PS}\right)^{\top}\left(\nabla C_{T}\left(\mathbf{M}_{n};\mathbf{M}^{PS}\right)-\nabla C_{T}\left(\mathbf{M}^{PS};\mathbf{M}^{PS}\right)\right)\right)\geq\widetilde{\mu}\left\Vert \mathbf{M}_{n}-\mathbf{M}^{PS}\right\Vert _{F}^{2}.\label{eq:RSGD term 2-2}
	\end{align}
	
	Substitute (\ref{eq:RSGD term 2-1}) and (\ref{eq:RSGD term 2-2})
	into (\ref{eq:RSGD term 2}) and take the full expectation, it follows
	that $I_{2}$ satisfies
	\begin{align}
		& I_{2}\leq-2\eta_{n}\left(\widetilde{\mu}-\sum_{t=0}^{T-1}\left(\varepsilon_{t}\sum_{i=t+1}^{T}\nu_{i}\right)\right)\mathbb{E}\left[\left\Vert \mathbf{M}_{n}-\mathbf{M}^{PS}\right\Vert _{F}^{2}\right].\label{eq: I_2 bound}
	\end{align}
	
	We next analyze the term $I_{3}.$ We observe the following equivalent
	expression for $I_{3}$
	\begin{align}
		& \left\Vert \nabla J_{T}\left(\mathbf{M}_{n};\left\{ \mathbf{\Delta}_{t}\right\} _{0\leq t<T}\right)\right\Vert _{F}^{2}=\left\Vert \nabla J_{T}\left(\mathbf{M}_{n};\left\{ \mathbf{\Delta}_{t}\right\} _{0\leq t<T}\right)-\nabla C_{T}\left(\mathbf{M}_{n};\mathbf{M}_{n}\right)\right.\\
		& +\nabla C_{T}\left(\mathbf{M}_{n};\mathbf{M}_{n}\right)-\left.\nabla C_{T}\left(\mathbf{M}^{PS};\mathbf{M}^{PS}\right)\right\Vert _{F}^{2}.\nonumber 
	\end{align}
	Therefore, 
	\begin{align}
		& \mathbb{E}\left[\left\Vert \nabla J_{T}\left(\mathbf{M}_{n};\left\{ \mathbf{\Delta}_{t}\right\} _{0\leq t<T}\right)\right\Vert _{F}^{2}\right]\label{eq:RSDG term 3}\\
		& \leq2\mathbb{E}\left[\left\Vert \nabla J_{T}\left(\mathbf{M}_{l};\left\{ \mathbf{\Delta}_{t}\right\} _{0\leq t<T}\right)-\nabla C_{T}\left(\mathbf{M}_{n};\mathbf{M}_{n}\right)\right\Vert _{F}^{2}\right]\nonumber \\
		& +2\mathbb{E}\left[\left\Vert \nabla C_{T}\left(\mathbf{M}_{n};\mathbf{M}_{n}\right)-\nabla C_{T}\left(\mathbf{M}^{PS};\mathbf{M}^{PS}\right)\right\Vert _{F}^{2}\right].\nonumber 
	\end{align}
	
	According to Lemma \ref{lemma Gradient-Variance total cost}, an upper
	bound of the variance of $\nabla J_{T}\left(\mathbf{M}_{n};\left\{ \mathbf{\Delta}_{t}\right\} _{0\leq t<T}\right)$
	is given by (\ref{eq:var-grad-M total cost}). Moreover, based on
	Lemma \ref{smooth}, we have 
	\begin{align}
		& \mathbb{E}\left[\left\Vert \nabla C_{T}\left(\mathbf{M}_{n};\mathbf{M}_{n}\right)-\nabla C_{T}\left(\mathbf{M}^{PS};\mathbf{M}^{PS}\right)\right\Vert _{F}^{2}\right]\label{eq:RSGD term 3 - 1}\\
		& \leq\left(\sum_{t=1}^{T}\lambda_{t}+\sum_{t=0}^{T-1}\left(\varepsilon_{t}\sum_{i=t+1}^{T}\nu_{i}\right)\right)^{2}\mathbb{E}\left[\left\Vert \mathbf{M}_{n}-\mathbf{M}^{PS}\right\Vert _{F}^{2}\right].\nonumber 
	\end{align}
	
	Substitute (\ref{eq:var-grad-M total cost}) and (\ref{eq:RSGD term 3 - 1})
	back into (\ref{eq:RSDG term 3}), it follows
	\begin{align}
		& I_{3}\leq2\eta_{n}^{2}T\sum_{t=1}^{T}\vartheta_{t}^{2}+2\eta_{n}^{2}\left(\sum_{t=1}^{T}\lambda_{t}+\sum_{t=0}^{T-1}\left(\varepsilon_{t}\sum_{i=t+1}^{T}\nu_{i}\right)\right)^{2}\mathbb{E}\left[\left\Vert \mathbf{M}_{n}-\mathbf{M}^{PS}\right\Vert _{F}^{2}\right].\label{eq: I_3 bound}
	\end{align}
	
	Now subtitute (\ref{eq: I_2 bound}) and (\ref{eq: I_3 bound}) back
	into (\ref{eq: RSGD M diff}), we obtain
	\begin{align}
		& \mathbb{E}\left[\left\Vert \mathbf{M}_{n+1}-\mathbf{M}^{PS}\right\Vert _{F}^{2}\right]\leq\mathbb{E}\left[\left\Vert \mathbf{M}_{n}-\mathbf{M}^{PS}\right\Vert _{F}^{2}\right]-2\eta_{n}\left(\widetilde{\mu}-\sum_{t=0}^{T-1}\left(\varepsilon_{t}\sum_{i=t+1}^{T}\nu_{i}\right)\right)\label{eq: E=00005BM=00005D diff bound}\\
		& \cdot\mathbb{E}\left[\left\Vert \mathbf{M}_{n}-\mathbf{M}^{PS}\right\Vert _{F}^{2}\right]+2\eta_{n}^{2}\left(\sum_{t=1}^{T}\lambda_{t}+\sum_{t=0}^{T-1}\left(\varepsilon_{t}\sum_{i=t+1}^{T}\nu_{i}\right)\right)^{2}\mathbb{E}\left[\left\Vert \mathbf{M}_{n}-\mathbf{M}^{PS}\right\Vert _{F}^{2}\right]+2\eta_{n}^{2}T\sum_{t=1}^{T}\vartheta_{t}^{2}\nonumber \\
		& =\left(1-2\eta_{n}\left(\widetilde{\mu}-\sum_{t=0}^{T-1}\left(\varepsilon_{t}\sum_{i=t+1}^{T}\nu_{i}\right)\right)+2\eta_{n}^{2}\left(\sum_{t=1}^{T}\lambda_{t}+\sum_{t=0}^{T-1}\left(\varepsilon_{t}\sum_{i=t+1}^{T}\nu_{i}\right)\right)^{2}\right)\mathbb{E}\left[\left\Vert \mathbf{M}_{n}-\mathbf{M}^{PS}\right\Vert _{F}^{2}\right]\nonumber \\
		& +2\eta_{n}^{2}T\sum_{t=1}^{T}\vartheta_{t}^{2}.\nonumber 
	\end{align}
	
	Let $\eta_{n}$ be choosen such that $\eta_{n}\left(\widetilde{\mu}-\sum_{t=0}^{T-1}\left(\varepsilon_{t}\sum_{i=t+1}^{T}\nu_{i}\right)\right)\geq2\eta_{n}^{2}\left(\sum_{t=1}^{T}\lambda_{t}+\sum_{t=0}^{T-1}\left(\varepsilon_{t}\sum_{i=t+1}^{T}\nu_{i}\right)\right)^{2}$.
	Then inequality (\ref{eq: E=00005BM=00005D diff bound}) is reduced
	to
	\begin{align}
		\mathbb{E}\left[\left\Vert \mathbf{M}_{n+1}-\mathbf{M}^{PS}\right\Vert _{F}^{2}\right]\leq & \left(1-\eta_{n}\left(\widetilde{\mu}-\sum_{t=0}^{T-1}\left(\varepsilon_{t}\sum_{i=t+1}^{T}\nu_{i}\right)\right)\right)\mathbb{E}\left[\left\Vert \mathbf{M}_{n}-\mathbf{M}^{PS}\right\Vert _{F}^{2}\right]\label{eq:final bound 1}\\
		& +2\eta_{n}^{2}T\sum_{t=1}^{T}\vartheta_{t}^{2}.\nonumber 
	\end{align}
	
	Therefore, 
	\begin{align}
		\mathbb{E}\left[\left\Vert \mathbf{M}_{N}-\mathbf{M}^{PS}\right\Vert _{F}^{2}\right]\leq & \prod_{n=0}^{N-1}\left(1-\eta_{n}\left(\widetilde{\mu}-\sum_{t=0}^{T-1}\left(\varepsilon_{t}\sum_{i=t+1}^{T}\nu_{i}\right)\right)\right)\mathbb{E}\left[\left\Vert \mathbf{M}_{0}-\mathbf{M}^{PS}\right\Vert _{F}^{2}\right]\label{eq:final bound 1-1}\\
		& +\sum_{n=0}^{N-1}\prod_{i=n+1}^{N-1}\left(1-\eta_{i}\left(\widetilde{\mu}-\sum_{t=0}^{T-1}\left(\varepsilon_{t}\sum_{i=t+1}^{T}\nu_{i}\right)\right)\right)2\eta_{n}^{2}T\sum_{t=1}^{T}\vartheta_{t}^{2}.\nonumber 
	\end{align}
	
	Let $\eta_{0}<\frac{2}{\widetilde{\mu}-\sum_{t=0}^{T-1}\left(\varepsilon_{t}\sum_{i=t+1}^{T}\nu_{i}\right)}$.
	Based on Lemma 6 in \cite{li2209multi}, if $\frac{\eta_{n}}{\eta_{n+1}}\leq\left(1+\frac{1}{2}\left(\widetilde{\mu}-\sum_{t=0}^{T-1}\left(\varepsilon_{t}\sum_{i=t+1}^{T}\nu_{i}\right)\right)\eta_{n+1}\right)$
	for any $n\geq0$, then 
	\begin{align}
		& \sum_{n=0}^{N-1}\eta_{n}^{2}\prod_{i=n+1}^{N-1}\left(1-\eta_{i}\left(\widetilde{\mu}-\sum_{t=0}^{T-1}\left(\varepsilon_{t}\sum_{i=t+1}^{T}\nu_{i}\right)\right)\right)\leq\frac{2\eta_{N-1}}{\widetilde{\mu}-\sum_{t=0}^{T-1}\left(\varepsilon_{t}\sum_{i=t+1}^{T}\nu_{i}\right)},\forall N\geq1.\label{eq:sum of products}
	\end{align}
	Substitute (\ref{eq:sum of products}) into (\ref{eq:final bound 1-1}),
	we obtain the desired result.
\end{proof}

Based on the step size requirements in Lemma \ref{Thm: Convergence RSGD appendix_lem}, RSGD converges when the aggregated sensitivity $\sum_{t=0}^{T-1}\left(\varepsilon_t \sum_{i=t+1}^T \nu_i\right)$ is strictly below the threshold $\widetilde{\mu}$ defined in Lemma \ref{Lemma: Existence Uniqueness M^PS}, i.e., the same sufficient condition that guarantees the existence of $\mathbf{M}^{P S}$.

The first term on the R. H. S. of \eqref{eq: RSGD error appendix_lem} decays sub-geometrically and is scaled by the initial error $\mathbb{E}\left[\left\|\mathbf{M}_0-\mathbf{M}^{P S}\right\|_F^2\right]$. The second term is a fluctuation term that only depends on the variance of the stochastic gradient, which decays at a slower rate as $\mathcal{O}\left(\eta_{N-1}\right)$.

\subsection{Convergence Analysis of Algorithm 1 with Diminishing Step Sizes}
We select the diminishing step sizes $\eta_n=\frac{\phi_1}{n+\phi_2}, \forall n \geq 0$, where $\phi_{1}$  and $\phi_{2}$ satisfy \eqref{17} and \eqref{18} simultaneously. We can verify that condition \eqref{lem1_them} in Lemma \ref{Thm: Convergence RSGD appendix_lem} is reduced to \eqref{17} in Theorem \ref{Thm: Convergence RSGD}. Besides, condition \eqref{lem2_them} in Lemma \ref{Thm: Convergence RSGD appendix_lem} can be further simplifed as 
\begin{align}
		\frac{\eta_{n}}{\eta_{n+1}}=\frac{\frac{\phi_1}{n+\phi_2}}{\frac{\phi_1}{n+1+\phi_2}}=1+\frac{1}{n+\phi_2}\leq \left(1+\frac{1}{2}\left(\widetilde{\mu}-\sum_{t=0}^{T-1}\left(\varepsilon_{t}\sum_{i=t+1}^{T}\nu_{i}\right)\right)\frac{\phi_1}{n+1+\phi_2}\right),
\end{align}
which is equivalent to 
\begin{align}\label{lem3_thm}
	\frac{n+1+\phi_2}{n+\phi_2}\leq \frac{1}{2}\left(\widetilde{\mu}-\sum_{t=0}^{T-1}\left(\varepsilon_{t}\sum_{i=t+1}^{T}\nu_{i}\right)\right)\phi_1.
\end{align}
Note that $\forall n \geq 0,$ to guarantee \eqref{lem3_thm} holds, we only need to let
\begin{align}\label{lem4_thm}
	\frac{2}{\left(\widetilde{\mu}-\sum_{t=0}^{T-1}\left(\varepsilon_{t}\sum_{i=t+1}^{T}\nu_{i}\right)\right)}\leq \frac{\phi_{1}\phi_2}{1+\phi_{2}}=\frac{\phi_1}{1+\frac{1}{\phi_2}},
\end{align}
which coincides with condition \eqref{18} in Theorem \ref{Thm: Convergence RSGD}.

Lastly, not that $1+x\leq e^x$ for all $x>0$, we can verify the first term in R.H.S. of \eqref{eq: RSGD error appendix_lem} can be upper bounded as 
\begin{align}\label{lem5_thm}
	&\prod_{n=0}^{N-1}\left(1-\eta_{n}\left(\widetilde{\mu}-\sum_{t=0}^{T-1}\left(\varepsilon_{t}\sum_{i=t+1}^{T}\nu_{i}\right)\right)\right)\mathbb{E}\left[\left\Vert \mathbf{M}_{0}-\mathbf{M}^{PS}\right\Vert _{F}^{2}\right]\nonumber\\
	&\leq e^{ -\sum_{n=0}^{N-1}\eta_{n}\left(\widetilde{\mu}-\sum_{t=0}^{T-1}\left(\varepsilon_{t}\sum_{i=t+1}^{T}\nu_{i}\right)\right)}\mathbb{E}\left[\left\Vert \mathbf{M}_{0}-\mathbf{M}^{PS}\right\Vert _{F}^{2}\right]\nonumber\\
	&= e^{-\sum_{n=0}^{N-1}\frac{\phi_{1}}{n+\phi_2}\left(\widetilde{\mu}-\sum_{t=0}^{T-1}\left(\varepsilon_{t}\sum_{i=t+1}^{T}\nu_{i}\right)\right)}\mathbb{E}\left[\left\Vert \mathbf{M}_{0}-\mathbf{M}^{PS}\right\Vert _{F}^{2}\right]\nonumber\\
	&\leq e^{-\sum_{n=1}^{N}\frac{\phi_{1}}{n}\left(\widetilde{\mu}-\sum_{t=0}^{T-1}\left(\varepsilon_{t}\sum_{i=t+1}^{T}\nu_{i}\right)\right)}\mathbb{E}\left[\left\Vert \mathbf{M}_{0}-\mathbf{M}^{PS}\right\Vert _{F}^{2}\right].
\end{align}

Therefore, Theorem \ref{Thm: Convergence RSGD} follows by combining \eqref{lem4_thm} and \eqref{lem5_thm}. 

\section{Proof of Proposition \ref{Proposition 1: Almost Sure Stable}}
Since $\left\Vert \mathbf{A}_{t}\right\Vert \leq1-\gamma+\kappa^{2}\xi_{t}\leq\zeta<1,\forall0\leq t<T,$
it follows directly that $\alpha_{t}=\prod_{i=0}^{t-1}\left(1-\gamma+\kappa^{2}\xi_{i}\right)\leq\prod_{i=0}^{t-1}\zeta=\zeta^{t}$
and $\beta_{t}=\sum_{i=0}^{t-1}\prod_{j=i+1}^{t-1}\left(\mathbf{1}_{j<t}\left(1-\gamma+\kappa^{2}\xi_{j}\right)+\mathbf{1}_{j=t}\right)\leq\frac{1-\zeta^{t}}{1-\zeta}.$ 

Further calculation yeilds $\nu_{t}=\left(c_{1}+c_{2}\beta_{t}\right)\left(x_{0}\alpha_{t}+c_{3}\beta_{t}\right)\leq\left(c_{1}+c_{2}\frac{1}{1-\zeta}\right)\left(x_{0}+c_{3}\frac{1}{1-\zeta}\right).$ 

As a result, the sufficient condition {\eqref{eq:suff condition M^PS}} for the
existence and uniqueness of $\mathbf{M}^{PS}$ is reduced to 
\begin{align}
	& \left(c_{1}+c_{2}\frac{1}{1-\zeta}\right)\left(x_{0}+c_{3}\frac{1}{1-\zeta}\right)\sum_{t=0}^{T-1}\left(\varepsilon_{t}\left(T-t\right)\right)<\widetilde{\mu}.\label{eq:prop suff 1}
\end{align}
This is equivalent to $\left(c_{1}+c_{2}\frac{1}{1-\zeta}\right)\left(x_{0}+c_{3}\frac{1}{1-\zeta}\right)\sum_{t=0}^{T-1}\frac{T-t}{T-H+1}\varepsilon_{t}<\overline{\mu},$
where $\overline{\mu}=\min\left\{ \frac{\mu\sigma^2}{2},\frac{\mu\sigma^{2}\gamma^{2}}{64\kappa^{10}}\right\} $.
As a result, to guarantee condition (\ref{eq:prop suff 1}) holds,
it is sufficient to ensure that
\begin{align*}
	& \sum_{t=0}^{T-1}\varepsilon_{t}<\left(1-\frac{H}{T}\right)\left(c_{1}+c_{2}\frac{1}{1-\zeta}\right)^{-1}\left(x_{0}+c_{3}\frac{1}{1-\zeta}\right)^{-1}\overline{\mu}.
\end{align*}
Proposition \ref{Proposition 1: Almost Sure Stable} is thus proved by choosing $\phi=\left(c_{1}+c_{2}\frac{1}{1-\zeta}\right)^{-1}\left(x_{0}+c_{3}\frac{1}{1-\zeta}\right)^{-1}.$

\section{Proof of Proposition \ref{Proposition 2: Almost Sure Unstable}}
Since $\widetilde{\zeta}\leq\left\Vert \mathbf{A}_{t}\right\Vert \leq1-\gamma+\kappa^{2}\xi_{t},\forall0\leq t<T,$
for some a positive constant $\widetilde{\zeta}>1,$ it follows directly
that $\alpha_{t}=\prod_{i=0}^{t-1}\left(1-\gamma+\kappa^{2}\xi_{i}\right)\geq\alpha_{t}\geq\prod_{i=0}^{t-1}\widetilde{\zeta}=\widetilde{\zeta}^{t}$
and $\beta_{t}=\sum_{i=0}^{t-1}\prod_{j=i+1}^{t-1}\left(\mathbf{1}_{j<t}\left(1-\gamma+\kappa^{2}\xi_{j}\right)+\mathbf{1}_{j=t}\right)\geq\frac{\widetilde{\zeta}^{t}-1}{\widetilde{\zeta}-1}.$ 

Further calculation yeilds $\nu_{t}=\left(c_{1}+c_{2}\beta_{t}\right)\left(x_{0}\alpha_{t}+c_{3}\beta_{t}\right)\geq\left(c_{1}+c_{2}\widetilde{\zeta}^{t-1}\right)\left(x_{0}\widetilde{\zeta}^{t}+c_{3}\widetilde{\zeta}^{t-1}\right).$
Therefore, 
\begin{align}
	& \sum_{i=t+1}^{T}\nu_{i}\geq\left(c_{1}x_{0}+\left(c_{1}c_{3}+c_{2}c_{3}+c_{3}x_{0}\right)\widetilde{\zeta}^{-1}\right)\sum_{i=t+1}^{T}\widetilde{\zeta}^{i}\nonumber\\
	&=\left(c_{1}x_{0}+\left(c_{1}c_{3}+c_{2}c_{3}+c_{3}x_{0}\right)\widetilde{\zeta}^{-1}\right)\frac{\widetilde{\zeta}^{T-t}-1}{\widetilde{\zeta}-1}.
\end{align}

As a result, to guarantee the sufficient condition \eqref{eq:suff condition M^PS} can be satifised,
we must have 
\begin{align}
	& \sum_{t=0}^{T-1}\frac{\varepsilon_{t}}{T-H+1}\left(c_{1}x_{0}+\left(c_{1}c_{3}+c_{2}c_{3}+c_{3}x_{0}\right)\widetilde{\zeta}^{-1}\right)\frac{\widetilde{\zeta}^{T-t}-1}{\widetilde{\zeta}-1}<\overline{\mu}.\label{eq:prop suff 2}
\end{align}
This means that 
\begin{align}
	\varepsilon_{t}< & \frac{\left(T-H+1\right)\left(\widetilde{\zeta}-1\right)}{c_{1}x_{0}+\left(c_{1}c_{3}+c_{2}c_{3}+c_{3}x_{0}\right)\widetilde{\zeta}^{-1}}\cdot\frac{\overline{\mu}}{\widetilde{\zeta}^{T-t}-1}.
\end{align}
As a result, Proposition \ref{Proposition 2: Almost Sure Unstable} is proved by letting $\phi=\frac{\widetilde{\zeta}-1}{c_{1}x_{0}+\left(c_{1}c_{3}+c_{2}c_{3}+c_{3}x_{0}\right)\widetilde{\zeta}^{-1}}.$

\end{document}